\providecommand{\U}[1]{\protect \rule{.1in}{.1in}}
\newtheorem{theorem}{Theorem}
\theoremstyle{plain}
\newtheorem{corollary}{Corollary}
\newtheorem{definition}{Definition}
\newtheorem{lemma}{Lemma}
\newtheorem{remark}{Remark}
\numberwithin{equation}{section}
\begin{document}
\title[some parabolic multilinear commutators]{A note for some parabolic multilinear commutators generated by a class of
parabolic maximal and {linear }operators with rough kernel on the parabolic
generalized local Morrey spaces }
\author{FER\.{I}T G\"{U}RB\"{U}Z}
\address{ANKARA UNIVERSITY, FACULTY OF SCIENCE, DEPARTMENT OF MATHEMATICS, TANDO\u{G}AN
06100, ANKARA, TURKEY }
\curraddr{}
\email{feritgurbuz84@hotmail.com}
\urladdr{}
\thanks{}
\thanks{}
\thanks{}
\date{}
\subjclass[2010]{ 42B20, 42B25}
\keywords{Parabolic multilinear commutators{; rough kernel; parabolic generalized local
Morrey space; parabolic local Campanato space}}
\dedicatory{ }
\begin{abstract}
In this paper, we give the boundedness of some parabolic multilinear
commutators generated by a class of parabolic maximal and linear operators
with rough kernel and parabolic local Campanato functions on the parabolic
generalized local Morrey spaces, respectively. Indeed, the results in this
paper are extensions of some known results.

\end{abstract}
\maketitle

\section{Introduction and main results}

Let $S^{n-1}=\left \{  x\in{\mathbb{R}^{n}:}\text{ }|x|=1\right \}  $ denote the
unit sphere on ${\mathbb{R}^{n}}$ $(n\geq2)$ equipped with the normalized
Lebesgue measure $d\sigma \left(  x^{\prime}\right)  $, where $x^{\prime}$
denotes the unit vector in the direction of $x$ and $\alpha_{n}\geq
\alpha_{n-1}\geq \cdots \geq \alpha_{1}\geq1$ be fixed real numbers.

Note that for each fixed $x=\left(  x_{1},\ldots,x_{n}\right)  \in
{\mathbb{R}^{n}}$, the function%
\[
F\left(  x,\rho \right)  =%
{\displaystyle \sum \limits_{i=1}^{n}}
\frac{x_{i}^{2}}{\rho^{2\alpha_{i}}}%
\]
is a strictly decreasing function of $\rho>0$. Hence, there exists a unique
$\rho=\rho \left(  x\right)  $ such that $F\left(  x,\rho \right)  =1$. It is
clear that for each fixed $x\in{\mathbb{R}^{n}}$, the function $F\left(
x,\rho \right)  $ is a decreasing function in $\rho>0$. Fabes and Rivi\'{e}re
\cite{Fabes and Riviere} showed that $\left(  {\mathbb{R}^{n},}\rho \right)  $
is a metric space which is often called the mixed homogeneity space related to
$\left \{  \alpha_{i}\right \}  _{i=1}^{n}$. For $t>0$, we let $A_{t}$ be the
diagonal $n\times n$ matrix%
\[
A_{t}=diag\left[  t^{\alpha_{1}},\ldots,t^{\alpha_{n}}\right]  =%
\begin{pmatrix}
t^{\alpha_{1}} &  & 0\\
& \ddots & \\
0 &  & t^{\alpha_{n}}%
\end{pmatrix}
.
\]

Let $\rho \in \left(  0,\infty \right)  $ and $0\leq \varphi_{n-1}\leq2\pi$,
$0\leq \varphi_{i}\leq \pi$, $i=1,\ldots,n-2$. For any $x=\left(  x_{1}%
,x_{2},\ldots,x_{n}\right)  \in{\mathbb{R}^{n}}$, set%
\begin{align*}
x_{1}  &  =\rho^{\alpha_{1}}\cos \varphi_{1}\ldots \cos \varphi_{n-2}\cos
\varphi_{n-1},\\
x_{2}  &  =\rho^{\alpha_{2}}\cos \varphi_{1}\ldots \cos \varphi_{n-2}\sin
\varphi_{n-1},\\
&  \vdots \\
x_{n-1}  &  =\rho^{\alpha_{n-1}}\cos \varphi_{1}\sin \varphi_{2},\\
x_{n}  &  =\rho^{\alpha_{n}}\sin \varphi_{1}.
\end{align*}
Thus $dx=\rho^{\alpha-1}J\left(  x^{\prime}\right)  d\rho d\sigma(x^{\prime}%
)$, where $\alpha=%
{\displaystyle \sum \limits_{i=1}^{n}}
\alpha_{i}$, $x^{\prime}\in S^{n-1}$, $J\left(  x^{\prime}\right)  =%
{\displaystyle \sum \limits_{i=1}^{n}}
\alpha_{i}\left(  x_{i}^{\prime}\right)  ^{2}$, $d\sigma$ is the element of
area of $S^{n-1}$ and $\rho^{\alpha-1}J\left(  x^{\prime}\right)  $ is the
Jacobian of the above transform. Obviously, $J\left(  x^{\prime}\right)  \in$
$C^{\infty}\left(  S^{n-1}\right)  $ function and that there exists $M>0$ such
that $1\leq J\left(  x^{\prime}\right)  \leq M$ and $x^{\prime}\in S^{n-1}$.

Let $P$ be a real $n\times n$ matrix, whose all the eigenvalues have positive
real part. Let $A_{t}=t^{P}$ $\left(  t>0\right)  $, and set $\gamma=trP$.
Then, there exists a quasi-distance $\rho$ associated with $P$ such that (see
\cite{Coifman-Weiss})

$\left(  1-1\right)  $ $\rho \left(  A_{t}x\right)  =t\rho \left(  x\right)  $,
$t>0$, for every $x\in{\mathbb{R}^{n}}$,

$\left(  1-2\right)  $ $\rho \left(  0\right)  =0$, $\rho \left(  x-y\right)
=\rho \left(  y-x\right)  \geq0$, and $\rho \left(  x-y\right)  \leq k\left(
\rho \left(  x-z\right)  +\rho \left(  y-z\right)  \right)  $,

$\left(  1-3\right)  $ $dx=\rho^{\gamma-1}d\sigma \left(  w\right)  d\rho$,
where $\rho=\rho \left(  x\right)  $, $w=A_{\rho^{-1}}x$ and $d\sigma \left(
w\right)  $ is a measure on the unit ellipsoid $\left \{  w:\rho \left(
w\right)  =1\right \}  $.

Then, $\left \{  {\mathbb{R}^{n},\rho,dx}\right \}  $ becomes a space of
homogeneous type in the sense of Coifman-Weiss (see \cite{Coifman-Weiss}) and
a homogeneous group in the sense of Folland-Stein (see \cite{Folland-Stein}).

Denote by $E\left(  x,r\right)  $ the ellipsoid with center at $x$ and radius
$r$, more precisely, $E\left(  x,r\right)  =\left \{  y\in{\mathbb{R}^{n}:\rho
}\left(  x-y\right)  <r\right \}  $. For $k>0$, we denote $kE\left(
x,r\right)  =\left \{  y\in{\mathbb{R}^{n}:\rho}\left(  x-y\right)
<kr\right \}  $. Moreover, by the property of $\rho$ and the polar coordinates
transform above, we have%
\[
\left \vert E\left(  x,r\right)  \right \vert =%
{\displaystyle \int \limits_{{\rho}\left(  x-y\right)  <r}}
dy=\upsilon_{\rho}r^{\alpha_{1}+\cdots+\alpha_{n}}=\upsilon_{\rho}r^{\gamma},
\]
where $|E(x,r)|$ stands for the Lebesgue measure of $E(x,r)$ and
$\upsilon_{\rho}$ is the volume of the unit ellipsoid on ${\mathbb{R}^{n}}$.
By $E^{C}(x,r)={\mathbb{R}^{n}}\setminus$ $E\left(  x,r\right)  $, we denote
the complement of $E\left(  x,r\right)  $. If we take $\alpha_{1}%
=\cdots=\alpha_{n}=1$ and $P=I$, then obviously $\rho \left(  x\right)
=\left \vert x\right \vert =\left(
{\displaystyle \sum \limits_{i=1}^{n}}
x_{i}^{2}\right)  ^{\frac{1}{2}} $, $\gamma=n$, $\left(  {\mathbb{R}^{n},\rho
}\right)  =$ $\left(  {\mathbb{R}^{n},}\left \vert \cdot \right \vert \right)  $,
$E_{I}(x,r)=B\left(  x,r\right)  $, $A_{t}=tI$ and $J\left(  x^{\prime
}\right)  \equiv1$. Moreover, in the standard parabolic case $P_{0}%
=diag\left[  1,\ldots,1,2\right]  $ we have%
\[
\rho \left(  x\right)  =\sqrt{\frac{\left \vert x^{\prime}\right \vert ^{2}%
+\sqrt{\left \vert x^{\prime}\right \vert ^{4}+x_{n}^{2}}}{2},}\qquad x=\left(
x^{\prime},x_{n}\right)  .
\]

Note that we deal not exactly with the parabolic metric, but with a general
anisotropic metric $\rho$ of generalized homogeneity, the parabolic metric
being its particular case, but we keep the term parabolic in the title and
text of the paper, the above existing tradition, see for instance
\cite{Calderon and Torchinsky}.

Suppose that $\Omega \left(  x\right)  $ is a real-valued and measurable
function defined on ${\mathbb{R}^{n}}$. Suppose that $S^{n-1}$ is the unit
sphere on ${\mathbb{R}^{n}}$ $(n\geq2)$ equipped with the normalized Lebesgue
surface measure $d\sigma$. Let $\Omega \in L_{s}(S^{n-1})$ with $1<s\leq \infty$
be homogeneous of degree zero with respect to $A_{t}$ ($\Omega \left(
x\right)  $ is $A_{t}$-homogeneous of degree zero), that is, $\Omega
(A_{t}x)=\Omega(x),~~$for any$~~t>0,$ $x\in{\mathbb{R}^{n}}$. We define
$s^{\prime}=\frac{s}{s-1}$ for any $s>1$. One of the important problems on
parabolic homogeneous spaces investigates the boundedness of parabolic linear
operators satisfying the following size conditions ((\ref{e1}) and
(\ref{e2})). Therefore, in this paper, we consider parabolic linear operators
$T_{\Omega}^{P}$ and $T_{\Omega,\alpha}^{P}$, $\alpha \in \left(  0,\gamma
\right)  $ satisfying the size conditions for any $f\in L_{1}({\mathbb{R}^{n}%
})$ with compact support and $x\notin suppf$, respectively%
\begin{equation}
|T_{\Omega}^{P}f(x)|\leq c_{0}\int \limits_{{\mathbb{R}^{n}}}\frac
{|\Omega(x-y)|}{{\rho}\left(  x-y\right)  ^{\gamma}}\,|f(y)|\,dy,\label{e1}%
\end{equation}%
\begin{equation}
|T_{\Omega,\alpha}^{P}f(x)|\leq c_{0}\int \limits_{{\mathbb{R}^{n}}}%
\frac{|\Omega(x-y)|}{{\rho}\left(  x-y\right)  ^{\gamma-\alpha}}%
\,|f(y)|\,dy,\label{e2}%
\end{equation}
where $c_{0}$ is independent of $f$ and $x$.

We point out that the conditions (\ref{e1}) and (\ref{e2}) in the case
$\Omega \equiv1$, $\alpha=0$ and $P=I$ was first introduced by Soria and Weiss
in \cite{SW}. Indeed, in 1944, Soria and Weiss developed Stein's result
\cite{St} in the above shape. The conditions (\ref{e1}) and (\ref{e2}) are
satisfied by many interesting operators in harmonic analysis, such as the
parabolic Calder\'{o}n--Zygmund operators, parabolic Carleson's maximal
operator, parabolic Hardy--Littlewood maximal operator, parabolic C.
Fefferman's singular multipliers, parabolic R. Fefferman's singular integrals,
parabolic Ricci--Stein's oscillatory singular integrals, parabolic the
Bochner--Riesz means, the parabolic fractional integral operator(parabolic
Riesz potential), parabolic fractional maximal operator, parabolic fractional
Marcinkiewicz operator and so on (see \cite{Gurbuz2, Gurbuz3, SW} for details).

The parabolic fractional maximal function $M_{\Omega,\alpha}^{P}f$ and
$T_{\Omega,\alpha}^{p}f$ by with rough kernels, $0<\alpha<\gamma$, of a
function $f\in L^{loc}\left(  {\mathbb{R}^{n}}\right)  $ are defined by%
\[
M_{\Omega,\alpha}^{P}f(x)=\sup_{t>0}|E(x,t)|^{-1+\frac{\alpha}{\gamma}}%
\int \limits_{E(x,t)}\left \vert \Omega \left(  x-y\right)  \right \vert |f(y)|dy,
\]%
\[
T_{\Omega,\alpha}^{P}f(x)=\int \limits_{{\mathbb{R}^{n}}}\frac{\Omega
(x-y)}{{\rho}\left(  x-y\right)  ^{\gamma-\alpha}}f(y)dy,
\]
satisfy condition (\ref{e2}). It is obvious that when $\Omega \equiv1$,
$M_{1,\alpha}^{P}\equiv M_{\alpha}^{p}$ and $T_{1,\alpha}^{P}\equiv T_{\alpha
}^{P}$ are the parabolic fractional maximal operator and the parabolic
fractional integral operator, respectively. If $P=I$, then $M_{\Omega,\alpha
}^{I}\equiv M_{\Omega,\alpha}$ and $T_{\Omega,\alpha}^{I}\equiv T_{\Omega
,\alpha}$ are the fractional maximal operator with rough kernel and fractional
integral operator with rough kernel, respectively. It is well known that the
parabolic fractional maximal and integral operators play an important role in
harmonic analysis (see \cite{Calderon and Torchinsky, Folland-Stein, Gurbuz3}).

We notice that when $\alpha=0$, the above operators become the parabolic
Calder\'{o}n--Zygmund singular integral operator with rough kernel $T_{\Omega
}^{P}=T_{\Omega,0}^{P}$ and the corresponding parabolic maximal operator with
rough kernel $M_{\Omega,0}^{P}\equiv M_{\Omega}^{P}$:%

\[
T_{\Omega}^{P}f(x)=p.v.\int \limits_{{\mathbb{R}^{n}}}\frac{\Omega(x-y)}{{\rho
}\left(  x-y\right)  ^{\gamma}}f(y)dy,
\]%
\[
M_{\Omega}^{P}f(x)=\sup_{t>0}|E(x,t)|^{-1}\int \limits_{E(x,t)}\left \vert
\Omega \left(  x-y\right)  \right \vert |f(y)|dy,
\]
satisfy condition (\ref{e1}). It is obvious that when $\Omega \equiv1$,
$T_{\Omega}^{P}\equiv T^{P}$ and $M_{\Omega}^{P}\equiv M^{P}$ are the
parabolic singular operator and the parabolic maximal operator, respectively.
If $P=I$, then $M_{\Omega}^{I}\equiv M_{\Omega}$ is the Hardy-Littlewood
maximal operator with rough kernel, and $T_{\Omega}^{I}\equiv T_{\Omega}$ is
the homogeneous singular integral operator. It is well known that the
parabolic maximal and singular operators play an important role in harmonic
analysis (see \cite{Calderon and Torchinsky, Folland-Stein, Gurbuz2, Xue}).

On the other hand let $b$ be a locally integrable function on ${\mathbb{R}%
^{n}}$, then for $0<\alpha<\gamma$, we define commutators generated by
parabolic fractional maximal and integral operators with rough kernel and $b$
as follows, respectively.%
\[
M_{\Omega,b,\alpha}^{P}\left(  f\right)  (x)=\sup_{t>0}|E(x,t)|^{-1+\frac
{\alpha}{\gamma}}\int \limits_{E(x,t)}\left \vert b\left(  x\right)  -b\left(
y\right)  \right \vert \left \vert \Omega \left(  x-y\right)  \right \vert
|f(y)|dy,
\]%
\[
\lbrack b,T_{\Omega,\alpha}^{P}]f(x)\equiv b(x)T_{\Omega,\alpha}%
^{P}f(x)-T_{\Omega,\alpha}^{P}(bf)(x)=\int \limits_{{\mathbb{R}^{n}}%
}[b(x)-b(y)]\frac{\Omega(x-y)}{{\rho}\left(  x-y\right)  ^{\gamma-\alpha}%
}f(y)dy.
\]

Similarly, for $\alpha=0$, we define commutators generated by parabolic
maximal and singular integral operators by with rough kernels and $b$ as
follows, respectively.%
\[
M_{\Omega,b}^{P}\left(  f\right)  (x)=\sup_{t>0}|E(x,t)|^{-1}\int
\limits_{E(x,t)}\left \vert b\left(  x\right)  -b\left(  y\right)  \right \vert
\left \vert \Omega \left(  x-y\right)  \right \vert |f(y)|dy,
\]%
\[
\lbrack b,T_{\Omega}^{P}]f(x)\equiv b(x)T_{\Omega}^{P}f(x)-T_{\Omega}%
^{P}(bf)(x)=p.v.\int \limits_{{\mathbb{R}^{n}}}[b(x)-b(y)]\frac{\Omega
(x-y)}{{\rho}\left(  x-y\right)  ^{\gamma}}f(y)dy.
\]

Because of the need for the study of partial differential equations (PDEs),
Morrey \cite{Morrey} introduced Morrey spaces $M_{p,\lambda}$ which naturally
are generalizations of Lebesgue spaces. We also refer to \cite{Adams} for the
latest research on the theory of Morrey spaces associated with harmonic analysis.

A measurable function $f\in L_{p}\left(  {\mathbb{R}^{n}}\right)  $,
$p\in \left(  1,\infty \right)  $, belongs to the parabolic Morrey spaces
$M_{p,\lambda,P}\left(  {\mathbb{R}^{n}}\right)  $ with $\lambda \in \left[
0,\gamma \right)  $ if the following norm is finite:%

\[
\left \Vert f\right \Vert _{M_{p,\lambda,P}}=\left(  \sup_{x\in{\mathbb{R}%
^{n},r>0}}\frac{1}{r^{\lambda}}%
{\displaystyle \int \limits_{E(x,r)}}
\left \vert f\left(  y\right)  \right \vert ^{p}dy\right)  ^{1/p},
\]
where $E(x,r)$ stands for any ellipsoid with center at $x$ and radius $r$.
When $\lambda=0$, $M_{p,\lambda,P}\left(  {\mathbb{R}^{n}}\right)  $ coincides
with the parabolic Lebesgue space $L_{p,P}\left(  {\mathbb{R}^{n}}\right)  $.

If $P=I$, then $M_{p,\lambda,I}({\mathbb{R}^{n}})\equiv M_{p,\lambda
}({\mathbb{R}^{n}})$ and $L_{p,I}\left(  {\mathbb{R}^{n}}\right)  \equiv
L_{p}\left(  {\mathbb{R}^{n}}\right)  $ are the classical Morrey and the
Lebesgue spaces, respectively.

We now recall the definition of parabolic generalized local (central) Morrey
space $LM_{p,\varphi,P}^{\{x_{0}\}}$ in the following.

\begin{definition}
\label{Definition2}\cite{Gurbuz2, Gurbuz3} \textbf{(parabolic generalized
local (central) Morrey space) }Let $\varphi(x,r)$ be a positive measurable
function on ${\mathbb{R}^{n}}\times(0,\infty)$ and $1\leq p<\infty$. For any
fixed $x_{0}\in{\mathbb{R}^{n}}$ we denote by $LM_{p,\varphi,P}^{\{x_{0}%
\}}\equiv LM_{p,\varphi,P}^{\{x_{0}\}}({\mathbb{R}^{n}})$ the parabolic
generalized local Morrey space, the space of all functions $f\in L_{p}%
^{loc}({\mathbb{R}^{n}})$ with finite quasinorm
\[
\Vert f\Vert_{LM_{p,\varphi,P}^{\{x_{0}\}}}=\sup \limits_{r>0}\varphi
(x_{0},r)^{-1}|E(x_{0},r)|^{-\frac{1}{p}}\Vert f\Vert_{L_{p}(E(x_{0}%
,r))}<\infty.
\]
According to this definition, we recover the local parabolic Morrey space
$LM_{p,\lambda,P}^{\{x_{0}\}}$ and weak local parabolic Morrey space
$WLM_{p,\lambda,P}^{\{x_{0}\}}$ under the choice $\varphi(x_{0},r)=r^{\frac
{\lambda-\gamma}{p}}$:%
\[
LM_{p,\lambda,P}^{\{x_{0}\}}=LM_{p,\varphi,P}^{\{x_{0}\}}\mid_{\varphi
(x_{0},r)=r^{\frac{\lambda-\gamma}{p}}},~~~~~~WLM_{p,\lambda,P}^{\{x_{0}%
\}}=WLM_{p,\varphi,P}^{\{x_{0}\}}\mid_{\varphi(x_{0},r)=r^{\frac
{\lambda-\gamma}{p}}}.
\]

\end{definition}

Now, let us recall the defination of the space of $LC_{p,\lambda,P}^{\left \{
x_{0}\right \}  }$ (parabolic local Campanato space{)}.

\begin{definition}
\cite{Gurbuz2, Gurbuz3} Let $1\leq p<\infty$ and $0\leq \lambda<\frac{1}%
{\gamma}$. A parabolic local Campanato function $b\in L_{p}^{loc}\left(
{\mathbb{R}^{n}}\right)  $ is said to belong to the $LC_{p,\lambda
,P}^{\left \{  x_{0}\right \}  }\left(  {\mathbb{R}^{n}}\right)  $, if%
\[
\left \Vert b\right \Vert _{LC_{p,\lambda,P}^{\left \{  x_{0}\right \}  }}%
=\sup_{r>0}\left(  \frac{1}{\left \vert E\left(  x_{0},r\right)  \right \vert
^{1+\lambda p}}\int \limits_{E\left(  x_{0},r\right)  }\left \vert b\left(
y\right)  -b_{E\left(  x_{0},r\right)  }\right \vert ^{p}dy\right)  ^{\frac
{1}{p}}<\infty,
\]
where%
\[
b_{E\left(  x_{0},r\right)  }=\frac{1}{\left \vert E\left(  x_{0},r\right)
\right \vert }\int \limits_{E\left(  x_{0},r\right)  }b\left(  y\right)  dy.
\]

Define%
\[
LC_{p,\lambda,P}^{\left \{  x_{0}\right \}  }\left(  {\mathbb{R}^{n}}\right)
=\left \{  b\in L_{p}^{loc}\left(  {\mathbb{R}^{n}}\right)  :\left \Vert
b\right \Vert _{LC_{p,\lambda,P}^{\left \{  x_{0}\right \}  }}<\infty \right \}  .
\]

\end{definition}

Let $b_{i}\left(  i=1,\ldots,m\right)  $ be locally integrable functions on
${\mathbb{R}^{n}}$, then the fractional type parabolic multilinear commutators
generated by parabolic fractional maximal and integral operators with rough
kernel and $\overrightarrow{b}=\left(  b_{1},\ldots,b_{m}\right)  $ (parabolic
local Campanato functions{) }are given as follows, respectively:%
\[
\lbrack \overrightarrow{b},T_{\Omega,\alpha}^{P}]f\left(  x\right)  =%
{\displaystyle \int \limits_{{\mathbb{R}^{n}}}}
{\displaystyle \prod \limits_{i=1}^{m}}
\left[  b_{i}\left(  x\right)  -b_{i}\left(  y\right)  \right]  \frac
{\Omega(x-y)}{{\rho}\left(  x-y\right)  ^{\gamma-\alpha}}f\left(  y\right)
dy,\qquad0<\alpha<\gamma,
\]%
\[
M_{\Omega,\overrightarrow{b},\alpha}^{P}f\left(  x\right)  =\sup
_{t>0}|E(x,t)|^{-1+\frac{\alpha}{\gamma}}%
{\displaystyle \int \limits_{E(x,t)}}
{\displaystyle \prod \limits_{i=1}^{m}}
\left[  \left \vert b_{i}\left(  x\right)  -b_{i}\left(  y\right)  \right \vert
\right]  \left \vert \Omega(x-y)\right \vert |f(y)|dy,\qquad0<\alpha<\gamma.
\]
We notice that when $\alpha=0$, the above operators become the parabolic
multilinear commutators generated by parabolic singular integral operators and
the corresponding parabolic\ maximal operators with rough kernel and
$\overrightarrow{b}=\left(  b_{1},\ldots,b_{m}\right)  ${\ }as follows,
respectively:%
\[
\lbrack \overrightarrow{b},T_{\Omega}^{P}]f\left(  x\right)  =%
{\displaystyle \int \limits_{{\mathbb{R}^{n}}}}
{\displaystyle \prod \limits_{i=1}^{m}}
\left[  b_{i}\left(  x\right)  -b_{i}\left(  y\right)  \right]  \frac
{\Omega(x-y)}{{\rho}\left(  x-y\right)  ^{\gamma}}f\left(  y\right)  dy,
\]%
\[
M_{\Omega,\overrightarrow{b}}^{P}f\left(  x\right)  =\sup_{t>0}|E(x,t)|^{-1}%
{\displaystyle \int \limits_{E(x,t)}}
{\displaystyle \prod \limits_{i=1}^{m}}
\left[  \left \vert b_{i}\left(  x\right)  -b_{i}\left(  y\right)  \right \vert
\right]  \left \vert \Omega(x-y)\right \vert |f(y)|dy.
\]

In \cite{Gurbuz2, Gurbuz3} the boundedness of a class of parabolic sublinear
operators with rough kernel and their commutators on the parabolic generalized
local Morrey spaces under generic size conditions which are satisfied by most
of the operators in harmonic analysis has been investigated, respectively.

Inspired by \cite{Gurbuz2, Gurbuz3}, our main purpose in this paper is to
consider the boundedness of above operators ($[\overrightarrow{b},T_{\Omega}%
]$, $M_{\Omega,\overrightarrow{b}}$, $[\overrightarrow{b},T_{\Omega,\alpha}]$,
$M_{\Omega,\overrightarrow{b},\alpha}$) on the parabolic generalized local
Morrey spaces, respectively. But, the techniques and non-trivial estimates
which have been used in the proofs of our main results are quite different
from \cite{Gurbuz2, Gurbuz3}. For example, using inequality about the weighted
Hardy operator $H_{w}$ in \cite{Gurbuz2, Gurbuz3}, in this paper we will only
use the following relationship between essential supremum and essential
infimum%
\begin{equation}
\left(  \operatorname*{essinf}\limits_{x\in E}f\left(  x\right)  \right)
^{-1}=\operatorname*{esssup}\limits_{x\in E}\frac{1}{f\left(  x\right)
},\label{5}%
\end{equation}
where $f$ is any real-valued nonnegative function and measurable on $E$ (see
\cite{Wheeden-Zygmund}, page 143). Our main results can be formulated as follows.

\begin{theorem}
\label{teo4}Suppose that $x_{0}\in{\mathbb{R}^{n}}$, $\Omega \in L_{s}%
(S^{n-1})$, $1<s\leq \infty$, is $A_{t}$-homogeneous of degree zero. Let
$T_{\Omega}^{P}$ be a parabolic linear operator satisfying condition
(\ref{e1}). Let also $1<q,p_{i},p<\infty$ with $\frac{1}{q}=\sum
\limits_{i=1}^{m}\frac{1}{p_{i}}+\frac{1}{p}$ and $\overrightarrow{b}\in
LC_{p_{i},\lambda_{i},P}^{\left \{  x_{0}\right \}  }({\mathbb{R}^{n}})$ for
$0\leq \lambda_{i}<\frac{1}{\gamma}$, $i=1,\ldots,m$.

Let also, for $s^{\prime}\leq q$ the pair $(\varphi_{1},\varphi_{2})$
satisfies the condition%
\begin{equation}%
{\displaystyle \int \limits_{r}^{\infty}}
\left(  1+\ln \frac{t}{r}\right)  ^{m}\frac{\operatorname*{essinf}%
\limits_{t<\tau<\infty}\varphi_{1}(x_{0},\tau)\tau^{\frac{\gamma}{p}}%
}{t^{\gamma \left(  \frac{1}{p}-%
{\displaystyle \sum \limits_{i=1}^{m}}
\lambda_{i}\right)  +1}}\leq C\, \varphi_{2}(x_{0},r),\label{47}%
\end{equation}
and for $p<s$ the pair $(\varphi_{1},\varphi_{2})$ satisfies the condition%
\[
\int \limits_{r}^{\infty}\left(  1+\ln \frac{t}{r}\right)  ^{m}\frac
{\operatorname*{essinf}\limits_{t<\tau<\infty}\varphi_{1}(x_{0},\tau
)\tau^{\frac{\gamma}{p}}}{t^{\gamma \left(  \frac{1}{p}-\frac{1}{s}-%
{\displaystyle \sum \limits_{i=1}^{m}}
\lambda_{i}\right)  +1}}dt\leq C\, \varphi_{2}(x_{0},r)r^{\frac{\gamma}{s}},
\]
where $C$ does not depend on $r$.

Then, the operators $[\overrightarrow{b},T_{\Omega}^{P}]$ and $M_{\Omega
,\overrightarrow{b}}^{P}$ are bounded from $LM_{p,\varphi_{1},P}^{\{x_{0}\}}$
to $LM_{q,\varphi_{2},P}^{\{x_{0}\}}$. Moreover,
\begin{equation}
\left \Vert \lbrack \overrightarrow{b},T_{\Omega}^{P}]f\right \Vert
_{LM_{q,\varphi_{2},P}^{\{x_{0}\}}}\lesssim%
{\displaystyle \prod \limits_{i=1}^{m}}
\Vert \overrightarrow{b}\Vert_{LC_{p_{i},\lambda_{i},P}^{\left \{
x_{0}\right \}  }}\left \Vert f\right \Vert _{LM_{p,\varphi_{1},P}^{\{x_{0}\}}%
},\label{13}%
\end{equation}%
\begin{equation}
\left \Vert \lbrack \overrightarrow{b},M_{\Omega,\overrightarrow{b}}%
^{P}]f\right \Vert _{LM_{q,\varphi_{2},P}^{\{x_{0}\}}}\lesssim%
{\displaystyle \prod \limits_{i=1}^{m}}
\Vert \overrightarrow{b}\Vert_{LC_{p_{i},\lambda_{i},P}^{\left \{
x_{0}\right \}  }}\left \Vert f\right \Vert _{LM_{p,\varphi_{1},P}^{\{x_{0}\}}%
}.\label{14}%
\end{equation}

\end{theorem}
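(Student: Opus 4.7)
\textbf{The plan} is to reduce the theorem to a local $L_q$-estimate on a single ellipsoid $E(x_0,r)$ and then aggregate via hypothesis (\ref{47}) and the essinf/esssup identity (\ref{5}). Since the size condition (\ref{e1}) yields the same pointwise bound for $M_{\Omega,\overrightarrow{b}}^P$ as for $[\overrightarrow{b},T_\Omega^P]$ (the scalar maximal case simply avoiding cancellation issues), both (\ref{13}) and (\ref{14}) will follow from the same scheme, and I describe only the argument for $[\overrightarrow{b},T_\Omega^P]$.

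\textbf{Local estimate.} Fix $x_0\in\mathbb{R}^n$ and $r>0$, and split $f=f_1+f_2$ with $f_1=f\chi_{2E(x_0,r)}$. For the near-field term $[\overrightarrow{b},T_\Omega^P]f_1$, I expand $b_i(x)-b_i(y)=(b_i(x)-b_{i,E(x_0,r)})-(b_i(y)-b_{i,E(x_0,r)})$ inside the $m$-fold product, apply Hölder's inequality with exponents $p_1,\ldots,p_m,p$, invoke the $L_p$-to-$L_q$ boundedness of $T_\Omega^P$, and use the Campanato inequality
\[
\left(|E(x_0,r)|^{-1}\int_{E(x_0,r)}|b_i(y)-b_{i,E(x_0,r)}|^{p_i}dy\right)^{1/p_i}\leq\|b_i\|_{LC_{p_i,\lambda_i,P}^{\{x_0\}}}|E(x_0,r)|^{\lambda_i}
\]
to obtain a bound by $\prod_i\|b_i\|_{LC_{p_i,\lambda_i,P}^{\{x_0\}}}r^{\gamma\sum_i\lambda_i}\|f\|_{L_p(2E(x_0,r))}$, which can be rewritten as a short-interval contribution to the target integral. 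For the far-field term, $\rho(x-y)\sim\rho(x_0-y)$ whenever $x\in E(x_0,r)$ and $y\notin 2E(x_0,r)$, so (\ref{e1}) together with the polar formula (1-3) reduces matters to an integral over $t\in(2r,\infty)$. Substituting $b_i(x)-b_i(y)=(b_i(x)-b_{i,E(x_0,t)})-(b_i(y)-b_{i,E(x_0,t)})$ into the product and expanding yields $2^m$ sub-products; each is estimated by Hölder's inequality with exponents $p_1,\ldots,p_m,p$, and the averaging bound
\[
|b_{i,E(x_0,r)}-b_{i,E(x_0,t)}|\lesssim\left(1+\ln\frac{t}{r}\right)|E(x_0,t)|^{\lambda_i}\|b_i\|_{LC_{p_i,\lambda_i,P}^{\{x_0\}}}
\]
contributes one log-factor per index $i$, giving the overall factor $(1+\ln(t/r))^m$. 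The rough kernel is absorbed by Hölder on $S^{n-1}$ using $\Omega\in L_s(S^{n-1})$: when $s'\leq q$ the $L_s$-average fits inside the $p$-th integral over $E(x_0,t)$, whereas when $p<s$ one pulls an extra factor $t^{\gamma/s}$ out of the surface integration, accounting for the two alternative pair conditions on $(\varphi_1,\varphi_2)$.

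\textbf{Morrey aggregation and main obstacle.} The previous step produces the local estimate
\[
\|[\overrightarrow{b},T_\Omega^P]f\|_{L_q(E(x_0,r))}\lesssim|E(x_0,r)|^{1/q}\prod_{i=1}^m\|b_i\|_{LC_{p_i,\lambda_i,P}^{\{x_0\}}}\int_r^\infty\left(1+\ln\frac{t}{r}\right)^m\frac{\|f\|_{L_p(E(x_0,t))}}{t^{\gamma(1/p-\sum_i\lambda_i)+1}}dt
\]
(with the obvious modification for $p<s$). Since $E(x_0,t)\subset E(x_0,\tau)$ for every $\tau\geq t$, the definition of $LM_{p,\varphi_1,P}^{\{x_0\}}$ gives $\|f\|_{L_p(E(x_0,t))}\leq\varphi_1(x_0,\tau)\tau^{\gamma/p}\|f\|_{LM_{p,\varphi_1,P}^{\{x_0\}}}$ for every such $\tau$; taking essinf over $\tau>t$ and invoking (\ref{5}) replaces $\|f\|_{L_p(E(x_0,t))}$ by $\operatorname*{essinf}_{t<\tau<\infty}\varphi_1(x_0,\tau)\tau^{\gamma/p}\cdot\|f\|_{LM_{p,\varphi_1,P}^{\{x_0\}}}$. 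Hypothesis (\ref{47}) then collapses the remaining integral into $\varphi_2(x_0,r)$, so dividing by $\varphi_2(x_0,r)|E(x_0,r)|^{1/q}$ and taking the supremum over $r$ yields (\ref{13}); the estimate (\ref{14}) follows identically using the pointwise definition of $M_{\Omega,\overrightarrow{b}}^P$ in place of (\ref{e1}). The main technical hurdle is the combinatorial bookkeeping of the $m$-fold expansion: the $2^m$ sub-products must each be paired with the correct Hölder exponent so that the accumulated powers $|E(x_0,t)|^{\lambda_i}$ collect into the single exponent $\sum_i\lambda_i$ appearing in (\ref{47}), the logarithm accumulates to exactly $(1+\ln(t/r))^m$ and no higher power, and the rough-kernel Hölder step is tracked correctly in both the $s'\leq q$ and $p<s$ regimes.
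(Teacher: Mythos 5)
Your proposal is correct and follows essentially the same approach as the paper: split $f=f_1+f_2$ at scale $\approx r$, expand the $m$-fold commutator into $2^m$ terms, estimate each via Hölder, the Campanato bounds on $\|b_i-b_{i,E}\|_{L_{p_i}}$ and $|b_{i,E(x_0,r)}-b_{i,E(x_0,t)}|$ (accumulating one log per factor), absorb the rough kernel by Hölder with $\Omega\in L_s(S^{n-1})$, and then aggregate using the essinf/esssup identity and condition \eqref{47}; the deduction of \eqref{14} from \eqref{13} via the pointwise domination of $M_{\Omega,\overrightarrow{b}}^P$ by a commutator with kernel $|\Omega|$ is also the paper's route (Lemma \ref{lemma100}). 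Two small points: for the near-field term the operator $T_\Omega^P$ is used as $L_p\to L_p$, not $L_p\to L_q$ (the $q$ arises only after Hölder with the Campanato factors), and the paper consistently uses the scale $2kr$ (with $k$ the quasi-metric constant) rather than $2r$ — neither alters the argument.
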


\begin{corollary}
\cite{Gurbuz2} Suppose that $x_{0}\in{\mathbb{R}^{n}}$, $\Omega \in
L_{s}(S^{n-1})$, $1<s\leq \infty$, is $A_{t}$-homogeneous of degree zero. Let
$T_{\Omega}^{P}$ be a parabolic linear operator satisfying condition
(\ref{e1}), bounded on $L_{p}({\mathbb{R}^{n}})$ for $1<p<\infty$. Let $b\in
LC_{p_{2},\lambda,P}^{\left \{  x_{0}\right \}  }\left(
\mathbb{R}
^{n}\right)  $, $0\leq \lambda<\frac{1}{\gamma}$ and $\frac{1}{p}=\frac
{1}{p_{1}}+\frac{1}{p_{2}}$. Let also, for $s^{\prime}\leq p$ the pair
$(\varphi_{1},\varphi_{2})$ satisfies the condition%
\[
\int \limits_{r}^{\infty}\left(  1+\ln \frac{t}{r}\right)  \frac
{\operatorname*{essinf}\limits_{t<\tau<\infty}\varphi_{1}(x_{0},\tau
)\tau^{\frac{\gamma}{p_{1}}}}{t^{\frac{\gamma}{p_{1}}+1-\gamma \lambda}}dt\leq
C\, \varphi_{2}(x_{0},r),
\]
and for $p_{1}<s$ the pair $(\varphi_{1},\varphi_{2})$ satisfies the condition%
\[
\int \limits_{r}^{\infty}\left(  1+\ln \frac{t}{r}\right)  \frac
{\operatorname*{essinf}\limits_{t<\tau<\infty}\varphi_{1}(x_{0},\tau
)\tau^{\frac{\gamma}{p_{1}}}}{t^{\frac{\gamma}{p_{1}}-\frac{\gamma}%
{s}+1-\gamma \lambda}}dt\leq C\, \varphi_{2}(x_{0},r)r^{\frac{\gamma}{s}},
\]
where $C$ does not depend on $r$.

Then, the operators $[b,T_{\Omega}^{P}]$ and $M_{\Omega,b}^{P}$ are bounded
from $LM_{p_{1},\varphi_{1},P}^{\{x_{0}\}}$ to $LM_{p,\varphi_{2},P}%
^{\{x_{0}\}}$. Moreover,%
\[
\left \Vert \lbrack b,T_{\Omega}^{P}]f\right \Vert _{LM_{p,\varphi_{2}%
,P}^{\{x_{0}\}}}\lesssim \left \Vert b\right \Vert _{LC_{p_{2},\lambda
,P}^{\left \{  x_{0}\right \}  }}\left \Vert f\right \Vert _{LM_{p_{1},\varphi
_{1},P}^{\{x_{0}\}}},
\]%
\[
\left \Vert M_{\Omega,b}^{P}f\right \Vert _{LM_{p,\varphi_{2},P}^{\{x_{0}\}}%
}\lesssim \left \Vert b\right \Vert _{LC_{p_{2},\lambda,P}^{\left \{
x_{0}\right \}  }}\left \Vert f\right \Vert _{LM_{p_{1},\varphi_{1},P}%
^{\{x_{0}\}}}.
\]

\end{corollary}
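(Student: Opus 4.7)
The plan is to obtain this Corollary as the direct specialization of Theorem \ref{teo4} to the single-symbol case $m=1$. Setting $\overrightarrow{b} = (b)$, the product $\prod_{i=1}^{m}[b_i(x)-b_i(y)]$ reduces to the single factor $b(x)-b(y)$, so the multilinear operators $[\overrightarrow{b}, T_{\Omega}^{P}]$ and $M_{\Omega, \overrightarrow{b}}^{P}$ collapse precisely to $[b, T_{\Omega}^{P}]$ and $M_{\Omega, b}^{P}$. In the same way the weight $(1 + \ln(t/r))^{m}$ becomes $1 + \ln(t/r)$ and the sum $\sum_{i=1}^{m}\lambda_i$ collapses to $\lambda$.

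Next I would align the two systems of exponent labels. Theorem \ref{teo4} uses the triple $(p, p_1, q)$ with $\tfrac{1}{q} = \tfrac{1}{p_1} + \tfrac{1}{p}$ (for $m=1$), while the Corollary uses $(p_1, p_2, p)$ with $\tfrac{1}{p} = \tfrac{1}{p_1} + \tfrac{1}{p_2}$. Identifying the target (image) Morrey exponent, the source Morrey exponent, and the Campanato exponent yields the dictionary $q_{\mathrm{thm}} \leftrightarrow p_{\mathrm{cor}}$, $p_{\mathrm{thm}} \leftrightarrow p_{1,\mathrm{cor}}$, $p_{1,\mathrm{thm}} \leftrightarrow p_{2,\mathrm{cor}}$. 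Under this substitution the Campanato hypothesis $\overrightarrow{b} \in LC_{p_i,\lambda_i,P}^{\{x_0\}}$ of the theorem reads $b \in LC_{p_2,\lambda,P}^{\{x_0\}}$, and the range condition $s' \leq q$ becomes $s' \leq p$, exactly as in the Corollary.

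The third step is to check that the two $(\varphi_1, \varphi_2)$-conditions in the Corollary are precisely the specializations of \eqref{47} and its companion. The denominator exponent $\gamma(\tfrac{1}{p} - \sum_{i=1}^{m}\lambda_i) + 1$ appearing in \eqref{47}, under the dictionary, becomes $\gamma(\tfrac{1}{p_1} - \lambda) + 1 = \tfrac{\gamma}{p_1} + 1 - \gamma\lambda$, which matches verbatim the exponent in the first Corollary hypothesis; the same reduction handles the $p_1 < s$ case, giving the exponent $\tfrac{\gamma}{p_1} - \tfrac{\gamma}{s} + 1 - \gamma\lambda$. Applying \eqref{13} and \eqref{14} of Theorem \ref{teo4} then delivers the two displayed inequalities of the Corollary, since $\prod_{i=1}^{1}\|b_i\|_{LC_{p_i,\lambda_i,P}^{\{x_0\}}} = \|b\|_{LC_{p_2,\lambda,P}^{\{x_0\}}}$.

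No substantive analytic obstacle arises, since the Corollary is a strict particularization of the preceding theorem and no new estimate is required; the assumption that $T_{\Omega}^{P}$ is $L_p$-bounded is already implicit in the hypotheses used by Theorem \ref{teo4} to control the inner-ellipsoid contribution. The only step meriting care is the bookkeeping of the two conflicting labellings, to ensure that each integral condition in the Corollary is recognized as the $m=1$ instance of the corresponding condition in Theorem \ref{teo4}.
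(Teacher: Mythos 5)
Your proposal is correct and is essentially the paper's own justification: the Corollary is stated (with a citation to \cite{Gurbuz2}) as the $m=1$ specialization of Theorem \ref{teo4}, and your relabeling dictionary $q_{\mathrm{thm}}\leftrightarrow p$, $p_{\mathrm{thm}}\leftrightarrow p_{1}$, $p_{1,\mathrm{thm}}\leftrightarrow p_{2}$, $\lambda_{1}\leftrightarrow\lambda$ correctly turns the hypotheses \eqref{47} and its companion, the range conditions $s'\leq q$ and $p<s$, and the conclusions \eqref{13}--\eqref{14} into exactly the statements of the Corollary. No further comment is needed.
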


\begin{theorem}
\label{teo4*}Suppose that $x_{0}\in{\mathbb{R}^{n}}$, $\Omega \in L_{s}%
(S^{n-1})$, $1<s\leq \infty$, is $A_{t}$-homogeneous of degree zero. Let
$T_{\Omega,\alpha}^{P}$ be a parabolic linear operator satisfying condition
(\ref{e2}). Let also $0<\alpha<\gamma$ and $1<q,q_{1},p_{i},p<\frac{\gamma
}{\alpha}$ with $\frac{1}{q}=\sum \limits_{i=1}^{m}\frac{1}{p_{i}}+\frac{1}{p}%
$, $\frac{1}{q_{1}}=\frac{1}{q}-\frac{\alpha}{\gamma}$ and $\overrightarrow
{b}\in LC_{p_{i},\lambda_{i},P}^{\left \{  x_{0}\right \}  }({\mathbb{R}^{n}})$
for $0\leq \lambda_{i}<\frac{1}{\gamma}$, $i=1,\ldots,m$.

Let also, for $s^{\prime}\leq q$ the pair $(\varphi_{1},\varphi_{2})$
satisfies the condition%
\begin{equation}%
{\displaystyle \int \limits_{r}^{\infty}}
\left(  1+\ln \frac{t}{r}\right)  ^{m}\frac{\operatorname*{essinf}%
\limits_{t<\tau<\infty}\varphi_{1}(x_{0},\tau)\tau^{\frac{\gamma}{p}}%
}{t^{\gamma \left(  \frac{1}{q_{1}}-\left(
{\displaystyle \sum \limits_{i=1}^{m}}
\lambda_{i}+%
{\displaystyle \sum \limits_{i=1}^{m}}
\frac{1}{p_{i}}\right)  \right)  +1}}\leq C\, \varphi_{2}(x_{0},r),\label{47*}%
\end{equation}
and for $q_{1}<s$ the pair $(\varphi_{1},\varphi_{2})$ satisfies the condition%
\[
\int \limits_{r}^{\infty}\left(  1+\ln \frac{t}{r}\right)  ^{m}\frac
{\operatorname*{essinf}\limits_{t<\tau<\infty}\varphi_{1}(x_{0},\tau
)\tau^{\frac{\gamma}{p}}}{t^{\gamma \left(  \frac{1}{q_{1}}-\left(  \frac{1}%
{s}+%
{\displaystyle \sum \limits_{i=1}^{m}}
\lambda_{i}+%
{\displaystyle \sum \limits_{i=1}^{m}}
\frac{1}{p_{i}}\right)  \right)  +1}}dt\leq C\, \varphi_{2}(x_{0}%
,r)r^{\frac{\gamma}{s}},
\]
where $C$ does not depend on $r$.

Then, the operators $[\overrightarrow{b},T_{\Omega,\alpha}^{P}]$ and
$M_{\Omega,\overrightarrow{b},\alpha}^{P}$ are bounded from $LM_{p,\varphi
_{1},P}^{\{x_{0}\}}$ to $LM_{q_{1},\varphi_{2},P}^{\{x_{0}\}}$. Moreover,
\begin{equation}
\left \Vert \lbrack \overrightarrow{b},T_{\Omega,\alpha}^{P}]f\right \Vert
_{LM_{q_{1},\varphi_{2},P}^{\{x_{0}\}}}\lesssim%
{\displaystyle \prod \limits_{i=1}^{m}}
\Vert \overrightarrow{b}\Vert_{LC_{p_{i},\lambda_{i},P}^{\left \{
x_{0}\right \}  }}\left \Vert f\right \Vert _{LM_{p,\varphi_{1},P}^{\{x_{0}\}}%
},\label{13*}%
\end{equation}%
\begin{equation}
\left \Vert M_{\Omega,\overrightarrow{b},\alpha}^{P}f\right \Vert _{LM_{q_{1}%
,\varphi_{2},P}^{\{x_{0}\}}}\lesssim%
{\displaystyle \prod \limits_{i=1}^{m}}
\Vert \overrightarrow{b}\Vert_{LC_{p_{i},\lambda_{i},P}^{\left \{
x_{0}\right \}  }}\left \Vert f\right \Vert _{LM_{p,\varphi_{1},P}^{\{x_{0}\}}%
}.\label{14*}%
\end{equation}

\end{theorem}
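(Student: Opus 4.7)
The plan is to adapt the Guliyev--Spanne--Morrey argument used for Theorem \ref{teo4} to the fractional setting, with exponent shift $\alpha/\gamma$. Fix $x_0\in\mathbb{R}^n$, $r>0$, set $E=E(x_0,r)$, and split $f=f_1+f_2$ with $f_1=f\chi_{2E}$, $f_2=f\chi_{\mathbb{R}^n\setminus 2E}$. The core step is to establish the local estimate
\[
\|[\overrightarrow{b},T_{\Omega,\alpha}^P]f\|_{L_{q_1}(E)}\;\lesssim\;\prod_{i=1}^m\|b_i\|_{LC_{p_i,\lambda_i,P}^{\{x_0\}}}\,r^{\gamma/q_1}\!\int_{2r}^{\infty}\!\Bigl(1+\ln\tfrac{t}{r}\Bigr)^{\!m}\frac{\|f\|_{L_p(E(x_0,t))}\,dt}{t^{\gamma(\frac{1}{q_1}-\sum_i\lambda_i-\sum_i\frac{1}{p_i})+1}},
\]
together with the analogous bound for $M_{\Omega,\overrightarrow{b},\alpha}^P$. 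Granted this, the global bounds \eqref{13*} and \eqref{14*} follow by plugging into the quasinorm definition of $LM_{q_1,\varphi_2,P}^{\{x_0\}}$, taking $\sup_r \varphi_2(x_0,r)^{-1}r^{-\gamma/q_1}$, and using identity \eqref{5} to rewrite $\|f\|_{L_p(E(x_0,t))}\le\|f\|_{LM_{p,\varphi_1,P}^{\{x_0\}}}\cdot\varphi_1(x_0,t)\,t^{\gamma/p}$ in a form where the essential infimum of \eqref{47*} appears; the hypothesis \eqref{47*} (respectively its $q_1<s$ counterpart) then absorbs the resulting integral.

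For the contribution of $f_1$ I would invoke the known $L_p\to L_{q_1}$ boundedness of $[\overrightarrow{b},T_{\Omega,\alpha}^P]$ and $M_{\Omega,\overrightarrow{b},\alpha}^P$ with parabolic Campanato symbols, and convert $\|f\|_{L_p(2E)}$ to integral form via $\|f\|_{L_p(2E)}\lesssim r^{\gamma/p}\int_{2r}^\infty\|f\|_{L_p(E(x_0,t))}\,t^{-\gamma/p-1}\,dt$ (obtained by inserting $1=(\gamma/p)\int_{2r}^\infty(2r/t)^{\gamma/p}dt/t$), matching the non-local format. For $f_2$ I would use the pointwise bound \eqref{e2}, the product expansion
\[
\prod_{i=1}^m[b_i(x)-b_i(y)]=\sum_{\sigma\subset\{1,\dots,m\}}(-1)^{m-|\sigma|}\!\prod_{i\in\sigma}(b_i(x)-b_{i,E(x_0,t)})\!\prod_{i\notin\sigma}(b_i(y)-b_{i,E(x_0,t)}),
\]
and a dyadic parabolic shell decomposition of $\mathbb{R}^n\setminus 2E$, applying Hölder on each shell with the split $\tfrac{1}{q}=\sum_i\tfrac{1}{p_i}+\tfrac{1}{p}$ (the Riesz shift $\tfrac{1}{q_1}=\tfrac{1}{q}-\alpha/\gamma$ being absorbed through the kernel factor $t^{-\gamma+\alpha}$). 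The $L_{s'}$ norm of $\Omega$ on the shell, combined with the Campanato bound $\|b_i-b_{i,E(x_0,t)}\|_{L_{p_i}(E(x_0,t))}\lesssim t^{\gamma(1/p_i+\lambda_i)}\|b_i\|_{LC_{p_i,\lambda_i,P}^{\{x_0\}}}$ and the telescoping defect $|b_{i,E(x_0,r)}-b_{i,E(x_0,t)}|\lesssim(1+\ln\tfrac{t}{r})\,t^{\gamma\lambda_i}\|b_i\|_{LC_{p_i,\lambda_i,P}^{\{x_0\}}}$, produces the $(1+\ln(t/r))^m$ factor after summing over $\sigma$; the two regimes $s'\leq q$ and $q_1<s$ correspond to whether $\Omega$ is placed on the $L_{s'}$ or $L_s$ side of the Hölder chain. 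The maximal commutator $M_{\Omega,\overrightarrow{b},\alpha}^P$ is handled identically, with $\sup_t$ replacing integration and absolute values removing any cancellation issues.

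The main obstacle is the fractional-exponent bookkeeping in the shell estimate: after Hölder one obtains, on the shell of radius $\sim t$, a factor $t^{-\gamma+\alpha}\cdot t^{\gamma/s}\cdot\prod_{i\notin\sigma}t^{\gamma/p_i+\gamma\lambda_i}\cdot\|f\|_{L_p(E(x_0,t))}$ times products of $|b_i(x)-b_{i,E(x_0,t)}|$ for $i\in\sigma$, and one has to verify—using $\tfrac{1}{q_1}=\sum_i\tfrac{1}{p_i}+\tfrac{1}{p}-\tfrac{\alpha}{\gamma}$—that summing over the $2^m$ choices of $\sigma$ and integrating in $t$ gives exactly the denominator $t^{\gamma(\frac{1}{q_1}-\sum_i\lambda_i-\sum_i\frac{1}{p_i})+1}=t^{\gamma/p-\alpha-\gamma\sum_i\lambda_i+1}$ appearing in \eqref{47*}. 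Once this alignment is confirmed, the globalization step via \eqref{5} is a verbatim adaptation of the argument for Theorem \ref{teo4}; the only genuinely new content is the careful fractional accounting.
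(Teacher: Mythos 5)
Your overall architecture coincides with the paper's: the local estimate you state is exactly \eqref{200*} (Lemma \ref{lemma3}), and your globalization step --- monotonicity of $t\mapsto\Vert f\Vert_{L_{p}(E(x_{0},t))}$, the essential infimum/supremum identity \eqref{5}, and absorption of the resulting integral by \eqref{47*} (respectively its $q_{1}<s$ counterpart) --- is verbatim the paper's proof of \eqref{13*}. Your remark that absolute values let the same argument cover $M_{\Omega,\overrightarrow{b},\alpha}^{P}$ corresponds to the paper's pointwise domination $M_{\Omega,\overrightarrow{b},\alpha}^{P}f\leq[\overrightarrow{b},\tilde{T}_{\left\vert \Omega\right\vert ,\alpha}^{P}](\left\vert f\right\vert)$ (Lemma \ref{lemma100*}), and your exponent check $t^{\gamma\left(\frac{1}{q_{1}}-\sum_{i}\lambda_{i}-\sum_{i}\frac{1}{p_{i}}\right)+1}=t^{\frac{\gamma}{p}-\alpha-\gamma\sum_{i}\lambda_{i}+1}$ is the correct bookkeeping for the $f_{2}$ (shell) part.

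The genuine gap is your treatment of $f_{1}$. You propose to ``invoke the known $L_{p}\to L_{q_{1}}$ boundedness of $[\overrightarrow{b},T_{\Omega,\alpha}^{P}]$ and $M_{\Omega,\overrightarrow{b},\alpha}^{P}$ with parabolic Campanato symbols.'' No such result is available, and as a global statement it cannot hold: membership in $LC_{p_{i},\lambda_{i},P}^{\left\{x_{0}\right\}}$ controls the oscillation of $b_{i}$ only on ellipsoids centered at the single point $x_{0}$, so a symbol that is wild away from $x_{0}$ destroys any Lebesgue-space operator bound. Moreover, even a correct local version is not an operator-norm statement one can cite: the estimate you need must produce the radius-dependent factor $r^{\gamma\sum_{i}\left(\frac{1}{p_{i}}+\lambda_{i}\right)}$ (which, multiplied by $r^{\frac{\gamma}{p}-\alpha}$ from $\Vert T_{\Omega,\alpha}^{P}f_{1}\Vert$, yields the prefactor $r^{\frac{\gamma}{q_{1}}}$ of \eqref{200*}), and that factor comes precisely from H\"older together with the Campanato bounds \eqref{1} and \eqref{2}. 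What is actually known, and what the paper uses, is only the $L_{p}\to L_{\overline{q}}$ boundedness of $T_{\Omega,\alpha}^{P}$ itself with $\frac{1}{\overline{q}}=\frac{1}{p}-\frac{\alpha}{\gamma}$ (Theorem 0.1 of \cite{Gurbuz3}); the commutator acting on $f_{1}$ is then handled by the same add-and-subtract expansion of $\prod_{i}\left(b_{i}(x)-b_{i}(y)\right)$ in terms of $b_{i}-\left(b_{i}\right)_{E}$ that you already describe for $f_{2}$, giving the four terms $A_{1},\ldots,A_{4}$ of the paper. So the step as written would fail, but it is repairable with machinery you already have; apart from this, your route is the paper's.
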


\begin{corollary}
\cite{Gurbuz3} Suppose that $x_{0}\in{\mathbb{R}^{n}}$, $\Omega \in
L_{s}(S^{n-1})$, $1<s\leq \infty$, is $A_{t}$-homogeneous of degree zero. Let
$T_{\Omega,\alpha}^{P}$ be a parabolic linear operator satisfying condition
(\ref{e2}) and bounded from $L_{p}({\mathbb{R}^{n}})$ to $L_{q}({\mathbb{R}%
^{n}})$. Let $0<\alpha<\gamma$, $1<p<\frac{\gamma}{\alpha}$, $b\in
LC_{p_{2},\lambda,P}^{\left \{  x_{0}\right \}  }\left(
\mathbb{R}
^{n}\right)  $, $0\leq \lambda<\frac{1}{\gamma}$, $\frac{1}{p}=\frac{1}{p_{1}%
}+\frac{1}{p_{2}}$, $\frac{1}{q}=\frac{1}{p}-\frac{\alpha}{\gamma}$, $\frac
{1}{q_{1}}=\frac{1}{p_{1}}-\frac{\alpha}{\gamma}$. Let also, for $s^{\prime
}\leq p$ the pair $(\varphi_{1},\varphi_{2})$ satisfies the condition%
\[
\int \limits_{r}^{\infty}\left(  1+\ln \frac{t}{r}\right)  \frac
{\operatorname*{essinf}\limits_{t<\tau<\infty}\varphi_{1}(x_{0},\tau
)\tau^{\frac{\gamma}{p_{1}}}}{t^{\frac{\gamma}{q_{1}}+1-\gamma \lambda}}dt\leq
C\, \varphi_{2}(x_{0},r),
\]
and for $q_{1}<s$ the pair $(\varphi_{1},\varphi_{2})$ satisfies the condition%
\[
\int \limits_{r}^{\infty}\left(  1+\ln \frac{t}{r}\right)  \frac
{\operatorname*{essinf}\limits_{t<\tau<\infty}\varphi_{1}(x_{0},\tau
)\tau^{\frac{\gamma}{p_{1}}}}{t^{\frac{\gamma}{q_{1}}-\frac{\gamma}%
{s}+1-\gamma \lambda}}dt\leq C\, \varphi_{2}(x_{0},r)r^{\frac{\gamma}{s}},
\]
where $C$ does not depend on $r$.

Then, the operators $[b,T_{\Omega,\alpha}^{P}]$ and $M_{\Omega,b,\alpha}^{P}$
are bounded from $LM_{p_{1},\varphi_{1},P}^{\{x_{0}\}}$ to $LM_{q,\varphi
_{2},P}^{\{x_{0}\}}$. Moreover,%
\[
\left \Vert \lbrack b,T_{\Omega,\alpha}^{P}]f\right \Vert _{LM_{q,\varphi_{2}%
,P}^{\{x_{0}\}}}\lesssim \left \Vert b\right \Vert _{LC_{p_{2},\lambda
,P}^{\left \{  x_{0}\right \}  }}\left \Vert f\right \Vert _{LM_{p_{1},\varphi
_{1},P}^{\{x_{0}\}}},
\]%
\[
\left \Vert M_{\Omega,b,\alpha}^{P}f\right \Vert _{LM_{q,\varphi_{2},P}%
^{\{x_{0}\}}}\lesssim \left \Vert b\right \Vert _{LC_{p_{2},\lambda,P}^{\left \{
x_{0}\right \}  }}\left \Vert f\right \Vert _{LM_{p_{1},\varphi_{1},P}%
^{\{x_{0}\}}}.
\]

\end{corollary}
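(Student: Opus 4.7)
The plan is to deduce this statement from Theorem~\ref{teo4*} by specializing to the single-commutator case $m=1$ and performing a careful relabeling of parameters, since the corollary is the single-commutator instance of the multilinear theorem.

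First, I would fix the parameter correspondence. Setting $m=1$ in Theorem~\ref{teo4*}, I identify the theorem's single Campanato function $b_1$, its integrability exponent $p_1$, and its index $\lambda_1$ with the corollary's $b$, $p_2$, $\lambda$. The theorem's $p$ (integrability of $f$) corresponds to the corollary's $p_1$; the theorem's intermediate $q$ corresponds to the corollary's $p$; and the theorem's target exponent $q_1$ corresponds to the corollary's $q$. Under this dictionary, the theorem's defining relations $\tfrac{1}{q}=\tfrac{1}{p_1}+\tfrac{1}{p}$ and $\tfrac{1}{q_1}=\tfrac{1}{q}-\tfrac{\alpha}{\gamma}$ become precisely the corollary's $\tfrac{1}{p}=\tfrac{1}{p_2}+\tfrac{1}{p_1}$ and $\tfrac{1}{q}=\tfrac{1}{p}-\tfrac{\alpha}{\gamma}$.

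Next, I would verify that the two integral conditions on $(\varphi_1,\varphi_2)$ match. The exponent on $t$ in~(\ref{47*}) with $m=1$ is $\gamma\bigl(\tfrac{1}{q_1}-\lambda_1-\tfrac{1}{p_1}\bigr)+1$, which under the relabeling becomes $\gamma\bigl(\tfrac{1}{q}-\lambda-\tfrac{1}{p_2}\bigr)+1$; using $\tfrac{1}{q}-\tfrac{1}{p_2}=\tfrac{1}{p}-\tfrac{\alpha}{\gamma}-\tfrac{1}{p_2}=\tfrac{1}{p_1}-\tfrac{\alpha}{\gamma}=\tfrac{1}{q_1}$ (where $q_1$ is now the corollary's $q_1$), this simplifies to $\tfrac{\gamma}{q_1}+1-\gamma\lambda$, exactly matching the corollary's first integral condition. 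The second integral condition is verified in the same way, with an additional $-\tfrac{\gamma}{s}$ absorbed into the exponent and the accompanying $r^{\gamma/s}$ factor appearing on the right. Then I would check the side conditions $s'\le q$ and $p<s$ in Theorem~\ref{teo4*}: under the relabeling these become $s'\le p$ (agreeing with the corollary) and $p_1<s$. Since the corollary's $p_1$ satisfies $p_1<q_1$ (as $\tfrac{1}{q_1}=\tfrac{1}{p_1}-\tfrac{\alpha}{\gamma}<\tfrac{1}{p_1}$), the corollary's stated side condition $q_1<s$ implies $p_1<s$, so Theorem~\ref{teo4*} is applicable whenever the corollary's hypotheses hold. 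The conclusion~(\ref{13*})--(\ref{14*}) of the theorem then specializes directly to the asserted boundedness of $[b,T_{\Omega,\alpha}^P]$ and $M_{\Omega,b,\alpha}^P$ from $LM_{p_1,\varphi_1,P}^{\{x_0\}}$ into $LM_{q,\varphi_2,P}^{\{x_0\}}$ together with the stated norm inequalities.

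The main obstacle will be purely the bookkeeping: the symbols $p_1,q,q_1$ are reused with different meanings in the theorem and the corollary, so care is needed to track that the integral-condition exponents and the side conditions really coincide after relabeling, and to notice that the corollary's $q_1<s$ is in fact slightly stronger than what Theorem~\ref{teo4*} requires, which is harmless for the deduction.
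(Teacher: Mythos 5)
Your deduction of the corollary from Theorem~\ref{teo4*} with $m=1$ and the relabeling (theorem's $p_1,\lambda_1,p,q,q_1 \leftrightarrow$ corollary's $p_2,\lambda,p_1,p,q$) is the right approach, and it is what the paper itself implicitly does (the paper gives no separate proof, simply citing \cite{Gurbuz3} and stating the corollary as the $m=1$ case). Your verification of the exponent identity $\gamma\bigl(\tfrac{1}{q}-\lambda-\tfrac{1}{p_2}\bigr)+1 = \tfrac{\gamma}{q_1}+1-\gamma\lambda$ using $\tfrac{1}{q}-\tfrac{1}{p_2}=\tfrac{1}{q_1}$ is correct, and the matching of the two integral conditions checks out.

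One bookkeeping slip to flag: you state the second side condition of Theorem~\ref{teo4*} as "$p<s$", but in the paper that theorem (and Lemma~\ref{lemma3}) uses "$q_1<s$"; the condition "$p<s$" belongs to the non-fractional Theorem~\ref{teo4}. Under your relabeling the theorem's $q_1<s$ becomes the corollary's $q<s$, not $p_1<s$ as you wrote. This turns out to be harmless: since $\tfrac{1}{q}=\tfrac{1}{q_1}+\tfrac{1}{p_2}>\tfrac{1}{q_1}$ we have $q<q_1$, so the corollary's stated hypothesis $q_1<s$ still implies the needed $q<s$ (and indeed also the $p_1<s$ you verified, since $p_1<q_1$). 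So the conclusion that Theorem~\ref{teo4*} is applicable stands, but the justification in your proposal points at the wrong inequality. You should correct the citation of the theorem's side condition and verify $q<s$ rather than $p_1<s$.
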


At last, throughout the paper we use the letter $C$ for a positive constant,
independent of appropriate parameters and not necessarily the same at each
occurrence. By $A\lesssim B$ we mean that $A\leq CB$ with some positive
constant $C$ independent of appropriate quantities. If $A\lesssim B$ and
$B\lesssim A$, we write $A\approx B$ and say that $A$ and $B$ are equivalent.

\section{Some Lemmas}

To prove the main results (Theorems \ref{teo4} and \ref{teo4*}), we need the
following lemmas.

\begin{lemma}
\label{Lemma 4}\cite{Gurbuz2, Gurbuz3} Let $b$ be a parabolic local Campanato
function in $LC_{p,\lambda,P}^{\left \{  x_{0}\right \}  }\left(
\mathbb{R}
^{n}\right)  $, $1\leq p<\infty$, $0\leq \lambda<\frac{1}{\gamma}$ and $r_{1}$,
$r_{2}>0$. Then%
\begin{equation}
\left(  \frac{1}{\left \vert E\left(  x_{0},r_{1}\right)  \right \vert
^{1+\lambda p}}%
{\displaystyle \int \limits_{E\left(  x_{0},r_{1}\right)  }}
\left \vert b\left(  y\right)  -b_{E\left(  x_{0},r_{2}\right)  }\right \vert
^{p}dy\right)  ^{\frac{1}{p}}\leq C\left(  1+\ln \frac{r_{1}}{r_{2}}\right)
\left \Vert b\right \Vert _{LC_{p,\lambda,P}^{\left \{  x_{0}\right \}  }%
},\label{a}%
\end{equation}
where $C>0$ is independent of $b$, $r_{1}$ and $r_{2}$.

From this inequality $\left(  \text{\ref{a}}\right)  $, we have%
\begin{equation}
\left \vert b_{E\left(  x_{0},r_{1}\right)  }-b_{E\left(  x_{0},r_{2}\right)
}\right \vert \leq C\left(  1+\ln \frac{r_{1}}{r_{2}}\right)  \left \vert
E\left(  x_{0},r_{1}\right)  \right \vert ^{\lambda}\left \Vert b\right \Vert
_{LC_{p,\lambda,P}^{\left \{  x_{0}\right \}  }},\label{b}%
\end{equation}
and it is easy to see that%
\begin{equation}
\left \Vert b-b_{E}\right \Vert _{L_{p}\left(  E\right)  }\leq C\left(
1+\ln \frac{r_{1}}{r_{2}}\right)  r^{\frac{\gamma}{p}+\gamma \lambda}\left \Vert
b\right \Vert _{LC_{p,\lambda,P}^{\left \{  x_{0}\right \}  }}.\label{c}%
\end{equation}

\end{lemma}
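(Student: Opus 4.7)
The plan is to first establish inequality \eqref{a}, from which \eqref{b} and \eqref{c} will follow by routine manipulations. For \eqref{a}, the triangle inequality gives
$$|b(y) - b_{E(x_0,r_2)}| \leq |b(y) - b_{E(x_0,r_1)}| + |b_{E(x_0,r_1)} - b_{E(x_0,r_2)}|,$$
so the integrated $L_p$ norm on the left splits into two pieces. The first is controlled directly by $|E(x_0,r_1)|^{1/p+\lambda}\,\|b\|_{LC_{p,\lambda,P}^{\{x_0\}}}$ straight from the definition of the parabolic local Campanato space. The second piece reduces to bounding the constant $|b_{E(x_0,r_1)} - b_{E(x_0,r_2)}|$, which is precisely assertion \eqref{b}.

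For \eqref{b}, I would assume without loss of generality $r_1 > r_2$ and choose $N$ with $2^{N} r_2 \approx r_1$, then telescope along the chain $E(x_0,r_2) \subset E(x_0,2r_2) \subset \cdots \subset E(x_0,2^N r_2)$, writing
$$b_{E(x_0,r_1)} - b_{E(x_0,r_2)} = \sum_{k=0}^{N-1}\bigl(b_{E(x_0,2^{k+1}r_2)} - b_{E(x_0,2^{k}r_2)}\bigr).$$
Each consecutive jump is estimated by expressing the difference of averages as a single average over the smaller ellipsoid, applying H\"older's inequality, and using the doubling $|E(x_0,2^{k+1}r_2)| \approx |E(x_0,2^{k}r_2)| = v_\rho (2^{k}r_2)^\gamma$, which yields
$$|b_{E(x_0,2^{k+1}r_2)} - b_{E(x_0,2^{k}r_2)}| \lesssim |E(x_0,2^{k+1}r_2)|^{\lambda}\,\|b\|_{LC_{p,\lambda,P}^{\{x_0\}}}.$$
Summing gives a geometric series in $2^{k\gamma\lambda}$. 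Using the uniform estimate
$$\sum_{k=0}^{N-1}(2^{k}r_2)^{\gamma\lambda} \lesssim (1+N)\, r_1^{\gamma\lambda}$$
valid for all $\lambda \in [0,1/\gamma)$, together with $N \approx \ln(r_1/r_2)$, one recovers exactly the factor $(1+\ln(r_1/r_2))\,|E(x_0,r_1)|^{\lambda}$ in \eqref{b}.

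Assertion \eqref{c} is then immediate by combining \eqref{a} with the identity $|E(x_0,r_1)| = v_\rho r_1^\gamma$, which converts the ellipsoid-measure factor into the stated power $r^{\gamma/p+\gamma\lambda}$.

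The main obstacle is the telescoping step in \eqref{b}. One has to arrange things so that the geometric sum in the regime $\lambda > 0$ collapses cleanly to a constant multiple of $|E(x_0,r_1)|^{\lambda}$ (without any spurious logarithmic loss), while the critical endpoint $\lambda = 0$ still produces the $1+\ln(r_1/r_2)$ factor coming from simply counting terms. Handling both regimes simultaneously via the single uniform bound $\sum_{k=0}^{N-1}(2^{k}r_2)^{\gamma\lambda} \lesssim (1+N)\,r_1^{\gamma\lambda}$ is the cleanest route; once this is in place, the rest of the argument is mechanical and the constants $C$ depend only on $p$, $\lambda$, and the quasi-metric constant $k$.
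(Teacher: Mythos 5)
Your argument is sound, and it is worth noting that the paper itself offers no proof of this lemma (it is imported by citation from \cite{Gurbuz2, Gurbuz3}), so there is no in-paper argument to compare against; what you give is the standard proof of such local Campanato estimates. You do invert the stated logical order: the paper presents (\ref{b}) as a consequence of (\ref{a}) (which it is, by averaging over $E(x_0,r_1)$ and H\"older), whereas you prove (\ref{b}) first by dyadic telescoping and then deduce (\ref{a}) from the triangle inequality plus the definition of $\Vert b\Vert_{LC_{p,\lambda,P}^{\{x_0\}}}$; this is perfectly legitimate and avoids circularity, since your jump estimate $|b_{E(x_0,2^{k+1}r_2)}-b_{E(x_0,2^{k}r_2)}|\lesssim |E(x_0,2^{k+1}r_2)|^{\lambda}\Vert b\Vert_{LC_{p,\lambda,P}^{\{x_0\}}}$ uses only the definition and the exact scaling $|E(x,r)|=\upsilon_{\rho}r^{\gamma}$, and your uniform bound $\sum_{k=0}^{N-1}(2^{k}r_2)^{\gamma\lambda}\lesssim(1+N)r_1^{\gamma\lambda}$ handles $\lambda=0$ and $\lambda>0$ simultaneously, giving exactly the factor $(1+\ln\frac{r_1}{r_2})|E(x_0,r_1)|^{\lambda}$; likewise (\ref{c}) is indeed just (\ref{a}) rewritten via $|E(x_0,r)|=\upsilon_{\rho}r^{\gamma}$. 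Two details deserve explicit mention: after fixing $N$ with $2^{N}r_2\approx r_1$ you still need one final comparison between $b_{E(x_0,2^{N}r_2)}$ and $b_{E(x_0,r_1)}$, which the same H\"older/doubling computation for comparable radii supplies; and your ``without loss of generality $r_1>r_2$'' is not literally a symmetry reduction, because the right-hand sides of (\ref{a})--(\ref{b}) are not symmetric in $r_1,r_2$ (for $r_1<r_2$ the factor $1+\ln\frac{r_1}{r_2}$ can even be negative), so the estimate is only meaningful as stated, and is only applied in the paper, in the regime $r_1\gtrsim r_2$ or with $\ln$ replaced by $|\ln|$ — it is better to say this outright than to claim the case $r_1<r_2$ follows by symmetry.
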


\begin{lemma}
\label{lemma2}Suppose that $x_{0}\in{\mathbb{R}^{n}}$, $\Omega \in
L_{s}(S^{n-1})$, $1<s\leq \infty$, is $A_{t}$-homogeneous of degree zero. Let
$T_{\Omega}^{P}$ be a parabolic linear operator satisfying condition
(\ref{e1}). Let also $1<q,p_{i},p<\infty$ with $\frac{1}{q}=\sum
\limits_{i=1}^{m}\frac{1}{p_{i}}+\frac{1}{p}$ and $\overrightarrow{b}\in
LC_{p_{i},\lambda_{i},P}^{\left \{  x_{0}\right \}  }({\mathbb{R}^{n}})$ for
$0\leq \lambda_{i}<\frac{1}{\gamma}$, $i=1,\ldots,m$. Then, for $s^{\prime}\leq
q$ the inequality
\begin{equation}
\Vert \lbrack \overrightarrow{b},T_{\Omega}^{P}]f\Vert_{L_{q}(E(x_{0}%
,r))}\lesssim%
{\displaystyle \prod \limits_{i=1}^{m}}
\Vert \overrightarrow{b}\Vert_{LC_{p_{i},\lambda_{i},P}^{\left \{
x_{0}\right \}  }}r^{\frac{\gamma}{q}}%
{\displaystyle \int \limits_{2kr}^{\infty}}
\left(  1+\ln \frac{t}{r}\right)  ^{m}\frac{\Vert f\Vert_{L_{p}(E(x_{0},t))}%
}{t^{\gamma \left(  \frac{1}{p}-%
{\displaystyle \sum \limits_{i=1}^{m}}
\lambda_{i}\right)  +1}}dt\label{200}%
\end{equation}
holds for any ellipsoid $E\left(  x_{0},r\right)  $ and for all $f\in
L_{p}^{loc}({\mathbb{R}^{n}})$. Also, for $p<s$ the inequality%
\[
\Vert \lbrack \overrightarrow{b},T_{\Omega}^{P}]f\Vert_{L_{q}(E(x_{0}%
,r))}\lesssim%
{\displaystyle \prod \limits_{i=1}^{m}}
\Vert \overrightarrow{b}\Vert_{LC_{p_{i},\lambda_{i},P}^{\left \{
x_{0}\right \}  }}\,r^{\frac{\gamma}{q}-\frac{\gamma}{s}}%
{\displaystyle \int \limits_{2kr}^{\infty}}
\left(  1+\ln \frac{t}{r}\right)  ^{m}\frac{\Vert f\Vert_{L_{p}(E(x_{0},t))}%
}{t^{\gamma \left(  \frac{1}{p}-\frac{1}{s}-%
{\displaystyle \sum \limits_{i=1}^{m}}
\lambda_{i}\right)  +1}}dt
\]
holds for any ellipsoid $E\left(  x_{0},r\right)  $ and for all $f\in
L_{p}^{loc}({\mathbb{R}^{n}})$.
\end{lemma}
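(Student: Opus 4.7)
I will follow the standard multilinear commutator scheme adapted to this anisotropic setting. Fix $E=E(x_{0},r)$, set $\tilde{E}=E(x_{0},2kr)$ and $\lambda_{i}=(b_{i})_{\tilde{E}}$, and split $f=f_{1}+f_{2}$ with $f_{1}=f\chi_{\tilde{E}}$. The key algebraic step is the expansion
\[
\prod_{i=1}^{m}\bigl(b_{i}(x)-b_{i}(y)\bigr)=\sum_{\sigma\subseteq\{1,\ldots,m\}}(-1)^{m-|\sigma|}\prod_{i\in\sigma}\bigl(b_{i}(x)-\lambda_{i}\bigr)\prod_{i\notin\sigma}\bigl(b_{i}(y)-\lambda_{i}\bigr),
\]
which reduces the problem to estimating, for each $\sigma$ and each $k\in\{1,2\}$,
\[
J_{\sigma}^{(k)}(x)=\prod_{i\in\sigma}\bigl(b_{i}(x)-\lambda_{i}\bigr)\cdot T_{\Omega}^{P}\!\Bigl(\prod_{i\notin\sigma}(b_{i}-\lambda_{i})\,f_{k}\Bigr)(x)
\]
in $L_{q}(E)$.

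For the local piece $J_{\sigma}^{(1)}$, I apply H\"older's inequality on $E$ with exponents $(p_{i})_{i\in\sigma}$ and $\tilde{q}_{\sigma}$, where $\tfrac{1}{\tilde{q}_{\sigma}}=\sum_{i\notin\sigma}\tfrac{1}{p_{i}}+\tfrac{1}{p}$, and use Lemma~1 (\ref{c}) to get $\|b_{i}-\lambda_{i}\|_{L_{p_{i}}(E)}\lesssim r^{\gamma/p_{i}+\gamma\lambda_{i}}\|b_{i}\|_{LC_{p_{i},\lambda_{i},P}^{\{x_{0}\}}}$; then I invoke the $L_{\tilde{q}_{\sigma}}$-boundedness of $T_{\Omega}^{P}$ (available in both regimes under $\Omega\in L_{s}(S^{n-1})$ by classical estimates for rough kernel singular integrals), followed by one more H\"older step on $\tilde{E}$ to peel off the remaining $(b_{i}-\lambda_{i})$ factors from $f_{1}$. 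This yields
\[
\|J_{\sigma}^{(1)}\|_{L_{q}(E)}\lesssim\prod_{i=1}^{m}\|b_{i}\|_{LC_{p_{i},\lambda_{i},P}^{\{x_{0}\}}}\cdot r^{\gamma(1/q-1/p)+\gamma\sum\lambda_{i}}\|f\|_{L_{p}(\tilde{E})},
\]
which I absorb into the claimed integral using the elementary fact that $\|f\|_{L_{p}(E(x_{0},t))}\ge\|f\|_{L_{p}(\tilde{E})}$ and $(1+\ln(t/r))^{m}\ge 1$ for $t\ge 2kr$, combined with the trivial computation $\int_{2kr}^{\infty}t^{-\gamma(1/p-\sum\lambda_{i})-1}\,dt\approx r^{-\gamma(1/p-\sum\lambda_{i})}$.

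For the far piece $J_{\sigma}^{(2)}$, I use the kernel bound (\ref{e1}) and the quasi-triangle inequality, which gives $\rho(x-y)\approx\rho(x_{0}-y)$ uniformly for $x\in E$ and $y\in\tilde{E}^{c}$, and then dyadically decompose $\tilde{E}^{c}=\bigsqcup_{j\ge 1}A_{j}$ with $A_{j}=E(x_{0},2^{j+1}kr)\setminus E(x_{0},2^{j}kr)$. On each shell I apply H\"older with exponents tailored to the regime: in the case $s'\le q$, $|\Omega(x-\cdot)|$ is absorbed into the integral through a nonnegative remainder exponent $1/\alpha=1-1/s-1/\tilde{q}_{\sigma}\ge 0$; in the case $p<s$, I reserve a dedicated $L_{s}$-slot for $\|\Omega(x-\cdot)\|_{L_{s}(A_{j})}\lesssim(2^{j}r)^{\gamma/s}\|\Omega\|_{L_{s}(S^{n-1})}$ (estimated via anisotropic polar coordinates and the $A_{t}$-homogeneity of $\Omega$), which is the source of the extra $r^{\gamma/s}$ factor outside. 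Each $\|b_{i}-\lambda_{i}\|_{L_{p_{i}}(E(x_{0},2^{j+1}kr))}$ is estimated by Lemma~1 as $(1+\ln 2^{j})(2^{j}r)^{\gamma/p_{i}+\gamma\lambda_{i}}\|b_{i}\|_{LC}$; bounding $(1+\ln 2^{j})^{m-|\sigma|}\le(1+\ln 2^{j})^{m}$ and rewriting $\sum_{j\ge 1}\approx\int_{2kr}^{\infty}\cdot\,dt/t$ converts the geometric series into the claimed integral form.

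The main obstacle will be the careful bookkeeping of exponents in the iterated H\"older applications, especially the distinction between the two regimes where the remainder exponent $\alpha$ changes sign and a dedicated $L_{s}$-slot must be introduced. A related subtlety is that the $L_{\tilde{q}_{\sigma}}$-boundedness of $T_{\Omega}^{P}$ invoked for the local part in the case $p<s$ is not automatic once $\tilde{q}_{\sigma}<s'$: in that sub-case one instead treats the local piece by reapplying the kernel bound (\ref{e1}) with the $L_{s}$-slot H\"older scheme on $\tilde{E}$ itself, paralleling the single-shell treatment of the far piece. Verifying that these two routes give the same final exponent structure on $r$ is the most delicate step.
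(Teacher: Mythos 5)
Your treatment of the first inequality (the regime $s^{\prime}\leq q$) is essentially the paper's own argument: the same splitting $f=f_{1}+f_{2}$ on $2kE$, the subset expansion of $\prod_{i}\left(  b_{i}(x)-b_{i}(y)\right)  $ (the paper writes it out for $m=2$), $L$-boundedness of $T_{\Omega}^{P}$ plus Lemma \ref{Lemma 4} for the local piece, and the size condition (\ref{e1}) together with (\ref{7}), H\"older and the estimate (\ref{e311}) for the far piece; replacing the paper's identity $\rho \left(  x_{0}-y\right)  ^{-\gamma}\approx \int_{\rho \left(  x_{0}-y\right)  }^{\infty}t^{-\gamma-1}dt$ by dyadic shells is immaterial. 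Two caveats even there: your ``trivial computation'' $\int_{2kr}^{\infty}t^{-\gamma \left(  1/p-\sum_{i}\lambda_{i}\right)  -1}dt\approx r^{-\gamma \left(  1/p-\sum_{i}\lambda_{i}\right)  }$ is false whenever $\sum_{i}\lambda_{i}\geq1/p$, which the hypotheses allow; the absorption of the local piece then survives only because the right-hand side of (\ref{200}) is infinite in that case, and this should be said (the paper avoids the issue by first writing $\Vert f\Vert_{L_{p}(2kE)}\lesssim r^{\gamma/p}\int_{2kr}^{\infty}\Vert f\Vert_{L_{p}(E(x_{0},t))}t^{-\gamma/p-1}dt$ and only afterwards trading $r^{\gamma \lambda_{i}}\leq t^{\gamma \lambda_{i}}$). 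Also, $\lambda_{i}$ is already the Campanato exponent, so do not reuse it for $(b_{i})_{\tilde{E}}$.

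The genuine gap is in the case $p<s$, which the paper omits and you therefore must actually supply. Your plan attributes the new factor to ``a dedicated $L_{s}$-slot'' $\Vert \Omega(x-\cdot)\Vert_{L_{s}(A_{j})}\lesssim(2^{j}r)^{\gamma/s}$, but this is a $y$-variable estimate used in \emph{both} regimes (it is (\ref{e311})): it produces $t^{\gamma/s}$ inside the integral and cannot produce the prefactor $r^{\gamma/q-\gamma/s}$; worse, the generalized H\"older inequality in $y$ with slots $L_{s}$ for $\Omega$, $L_{p_{i}}$ for the $b_{i}$'s and $L_{p}$ for $f$ is unavailable as soon as $\frac{1}{s}+\frac{1}{\tilde{q}_{\sigma}}>1$, i.e. $\tilde{q}_{\sigma}<s^{\prime}$, which is exactly the situation the second case has to cover. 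The correct mechanism is an $x$-variable one: move the $L_{q}(E(x_{0},r),dx)$ norm inside the $y$-integral by Minkowski's integral inequality and apply H\"older in $x$ over $E(x_{0},r)$ with slots $L_{p_{i}}$ for $i\in \sigma$, $L_{s}$ for $\Omega(\cdot-y)$ and remainder exponent $\frac{1}{q}-\sum_{i\in \sigma}\frac{1}{p_{i}}-\frac{1}{s}=\frac{1}{\tilde{q}_{\sigma}}-\frac{1}{s}\geq0$ (here $\tilde{q}_{\sigma}\leq p<s$ is what makes this regime work); then $\Vert \Omega(\cdot-y)\Vert_{L_{s}(E(x_{0},r))}\lesssim \Vert \Omega \Vert_{L_{s}(S^{n-1})}t^{\gamma/s}$ for $\rho \left(  x_{0}-y\right)  \approx t$ gives the $t^{\gamma/s}$ inside, and the leftover $\left \vert E(x_{0},r)\right \vert ^{\frac{1}{\tilde{q}_{\sigma}}-\frac{1}{s}}$, combined with the $r^{\gamma/p_{i}+\gamma \lambda_{i}}$ factors from Lemma \ref{Lemma 4}, yields exactly $r^{\frac{\gamma}{q}-\frac{\gamma}{s}}$ outside. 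Finally, your fallback for the local piece in this regime, ``reapplying the kernel bound (\ref{e1}) with the $L_{s}$-slot H\"older scheme on $\tilde{E}$ itself,'' is not legitimate: (\ref{e1}) is a size condition valid only for $x\notin \operatorname{supp}f$, so it cannot be applied to $f_{1}$ on its own support, and the resulting integral would in any case diverge near $y=x$. The local piece must be handled, in both regimes, through the $L^{r}$-boundedness of $T_{\Omega}^{P}$ (the paper cites Theorem 1.2 of \cite{Gurbuz2} for this), exactly as you do in the first case.
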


\begin{proof}
Without loss of generality, it is sufficient to show that the conclusion holds
for $[\overrightarrow{b},T_{\Omega}^{P}]f=[\left(  b_{1},b_{2}\right)
,T_{\Omega}^{P}]f$. Let $1<q,p_{i},p<\infty$ with $\frac{1}{q}=\sum
\limits_{i=1}^{m}\frac{1}{p_{i}}+\frac{1}{p}$ and $\overrightarrow{b}\in
LC_{p_{i},\lambda_{i},P}^{\left \{  x_{0}\right \}  }({\mathbb{R}^{n}})$ for
$0\leq \lambda_{i}<\frac{1}{\gamma}$, $i=1,\ldots,m$. Set $E=E\left(
x_{0},r\right)  $ for the parabolic ball (ellipsoid) centered at $x_{0}$ and
of radius $r$ and $2kE=E\left(  x_{0},2kr\right)  $. We represent $f$ as%
\begin{equation}
f=f_{1}+f_{2},\qquad \text{\ }f_{1}\left(  y\right)  =f\left(  y\right)
\chi_{2kE}\left(  y\right)  ,\qquad \text{\ }f_{2}\left(  y\right)  =f\left(
y\right)  \chi_{\left(  2kE\right)  ^{C}}\left(  y\right)  ,\qquad
r>0\label{9}%
\end{equation}
and thus have%
\[
\left \Vert \lbrack \left(  b_{1},b_{2}\right)  ,T_{\Omega}^{P}]f\right \Vert
_{L_{q}\left(  E\right)  }\leq \left \Vert \lbrack \left(  b_{1},b_{2}\right)
,T_{\Omega}^{P}]f_{1}\right \Vert _{L_{q}\left(  E\right)  }+\left \Vert
[\left(  b_{1},b_{2}\right)  ,T_{\Omega}^{P}]f_{2}\right \Vert _{L_{q}\left(
E\right)  }=:F+G.
\]
Let us estimate $F+G$, respectively.

For $[\left(  b_{1},b_{2}\right)  ,T_{\Omega}^{P}]f_{1}\left(  x\right)  $, we
have the following decomposition,%
\begin{align*}
\lbrack \left(  b_{1},b_{2}\right)  ,T_{\Omega}^{P}]f_{1}\left(  x\right)   &
=\left(  b_{1}\left(  x\right)  -\left(  b_{1}\right)  _{E}\right)  \left(
b_{2}\left(  x\right)  -\left(  b_{2}\right)  _{E}\right)  T_{\Omega}^{P}%
f_{1}\left(  x\right) \\
& -\left(  b_{1}\left(  \cdot \right)  -\left(  b_{1}\right)  _{E}\right)
T_{\Omega}^{P}\left(  \left(  b_{2}\left(  \cdot \right)  -\left(
b_{2}\right)  _{E}\right)  f_{1}\right)  \left(  x\right) \\
& +\left(  b_{2}\left(  x\right)  -\left(  b_{2}\right)  _{E}\right)
T_{\Omega}^{P}\left(  \left(  b_{1}\left(  x\right)  -\left(  b_{1}\right)
_{E}\right)  f_{1}\right)  \left(  x\right)  -\\
& T_{\Omega}^{P}\left(  \left(  b_{1}\left(  \cdot \right)  -\left(
b_{1}\right)  _{E}\right)  \left(  b_{2}\left(  \cdot \right)  -\left(
b_{2}\right)  _{E}\right)  f_{1}\right)  \left(  x\right)  .
\end{align*}
Hence, we get%
\begin{align}
F  & =\left \Vert [\left(  b_{1},b_{2}\right)  ,T_{\Omega}^{P}]f_{1}\right \Vert
_{L_{q}\left(  E\right)  }\lesssim \nonumber \\
& \left \Vert \left(  b_{1}-\left(  b_{1}\right)  _{E}\right)  \left(
b_{2}\left(  x\right)  -\left(  b_{2}\right)  _{E}\right)  T_{\Omega}^{P}%
f_{1}\right \Vert _{L_{q}\left(  E\right)  }\nonumber \\
& +\left \Vert \left(  b_{1}-\left(  b_{1}\right)  _{E}\right)  T_{\Omega}%
^{P}\left(  \left(  b_{2}-\left(  b_{2}\right)  _{E}\right)  f_{1}\right)
\right \Vert _{L_{q}\left(  E\right)  }\nonumber \\
& +\left \Vert \left(  b_{2}-\left(  b_{2}\right)  _{E}\right)  T_{\Omega}%
^{P}\left(  \left(  b_{1}-\left(  b_{1}\right)  _{E}\right)  f_{1}\right)
\right \Vert _{L_{q}\left(  E\right)  }\nonumber \\
& +\left \Vert T_{\Omega}^{P}\left(  \left(  b_{1}-\left(  b_{1}\right)
_{E}\right)  \left(  b_{2}-\left(  b_{2}\right)  _{E}\right)  f_{1}\right)
\right \Vert _{L_{q}\left(  E\right)  }\nonumber \\
& \equiv F_{1}+F_{2}+F_{3}+F_{4}.\label{0}%
\end{align}
One observes that the estimate of $F_{2}$ is analogous to that of $F_{3}$.
Thus, we will only estimate $F_{1}$, $F_{2}$ and $F_{4}$.

To estimate $F_{1}$, let $1<q,\tau<\infty$, such that $\frac{1}{q}=\frac
{1}{\tau}+\frac{1}{p}$, $\frac{1}{\tau}=\frac{1}{p_{1}}+\frac{1}{p_{2}}$.
Then, using H\"{o}lder's inequality and by the boundedness of $T_{\Omega}^{P}$
on $L_{p}$ (see Theorem 1.2. in \cite{Gurbuz2}) it follows that:%
\begin{align*}
F_{1}  & =\left \Vert \left(  b_{1}-\left(  b_{1}\right)  _{E}\right)  \left(
b_{2}\left(  x\right)  -\left(  b_{2}\right)  _{E}\right)  T_{\Omega}^{P}%
f_{1}\right \Vert _{L_{q}\left(  E\right)  }\\
& \lesssim \left \Vert \left(  b_{1}-\left(  b_{1}\right)  _{E}\right)  \left(
b_{2}\left(  x\right)  -\left(  b_{2}\right)  _{E}\right)  \right \Vert
_{L_{\tau}\left(  E\right)  }\left \Vert T_{\Omega}^{P}f_{1}\right \Vert
_{L_{p}\left(  E\right)  }\\
& \lesssim \left \Vert b_{1}-\left(  b_{1}\right)  _{E}\right \Vert _{L_{p_{1}%
}\left(  E\right)  }\left \Vert b_{2}-\left(  b_{2}\right)  _{E}\right \Vert
_{L_{p_{2}}\left(  E\right)  }\left \Vert f\right \Vert _{L_{p}\left(
2kE\right)  }\\
& \lesssim \left \Vert b_{1}-\left(  b_{1}\right)  _{E}\right \Vert _{L_{p_{1}%
}\left(  E\right)  }\left \Vert b_{2}-\left(  b_{2}\right)  _{E}\right \Vert
_{L_{p_{2}}\left(  E\right)  }r^{\frac{\gamma}{p}}\int \limits_{2kr}^{\infty
}\Vert f\Vert_{L_{p}(E(x_{0},t))}\frac{dt}{t^{\frac{\gamma}{p}+1}}.
\end{align*}
From Lemma \ref{Lemma 4}, it is easy to see that%
\begin{equation}
\left \Vert b_{i}-\left(  b_{i}\right)  _{E}\right \Vert _{L_{p_{i}}\left(
E\right)  }\leq Cr^{\frac{\gamma}{p_{i}}+\gamma \lambda_{i}}\left \Vert
b_{i}\right \Vert _{LC_{p_{i},\lambda_{i},P}^{\left \{  x_{0}\right \}  }%
},\label{1}%
\end{equation}
and%
\begin{align}
\left \Vert b_{i}-\left(  b_{i}\right)  _{E}\right \Vert _{L_{p_{i}}\left(
2kE\right)  } &  \leq \left \Vert b_{i}-\left(  b_{i}\right)  _{2kE}\right \Vert
_{L_{p_{i}}\left(  2kE\right)  }+\left \Vert \left(  b_{i}\right)  _{E}-\left(
b_{i}\right)  _{2kE}\right \Vert _{L_{p_{i}}\left(  2kE\right)  }\nonumber \\
&  \lesssim r^{\frac{\gamma}{p_{i}}+\gamma \lambda_{i}}\left \Vert
b_{i}\right \Vert _{LC_{p_{i},\lambda_{i},P}^{\left \{  x_{0}\right \}  }%
},\label{2}%
\end{align}
for $i=1$, $2$. Hence, by (\ref{1}) we get%
\begin{align*}
F_{1}  & \lesssim \Vert b_{1}\Vert_{LC_{p_{1},\lambda_{1},P}^{\left \{
x_{0}\right \}  }}\Vert b_{2}\Vert_{LC_{p_{2},\lambda_{2},P}^{\left \{
x_{0}\right \}  }}r^{\gamma \left(  \frac{1}{p_{1}}+\frac{1}{p_{2}}+\frac{1}%
{p}\right)  }\int \limits_{2kr}^{\infty}\left(  1+\ln \frac{t}{r}\right)
^{2}t^{-\frac{\gamma}{p}+\gamma \left(  \lambda_{1}+\lambda_{2}\right)
-1}\Vert f\Vert_{L_{p}(E(x_{0},t))}dt\\
& \lesssim \Vert b_{1}\Vert_{LC_{p_{1},\lambda_{1},P}^{\left \{  x_{0}\right \}
}}\Vert b_{2}\Vert_{LC_{p_{2},\lambda_{2},P}^{\left \{  x_{0}\right \}  }%
}r^{\frac{\gamma}{q}}\int \limits_{2kr}^{\infty}\left(  1+\ln \frac{t}%
{r}\right)  ^{2}\frac{\Vert f\Vert_{L_{p}(E(x_{0},t))}}{t^{\frac{\gamma}%
{p}-\gamma \left(  \lambda_{1}+\lambda_{2}\right)  +1}}dt.
\end{align*}
To estimate $F_{2}$, let $1<\tau<\infty$, such that $\frac{1}{q}=\frac
{1}{p_{1}}+\frac{1}{\tau}$. Then, similar to the estimates for $F_{1}$, we
have%
\begin{align*}
F_{2} &  \lesssim \left \Vert b_{1}-\left(  b_{1}\right)  _{E}\right \Vert
_{L_{p_{1}}\left(  E\right)  }\left \Vert T_{\Omega}^{P}\left(  \left(
b_{2}\left(  \cdot \right)  -\left(  b_{2}\right)  _{E}\right)  f_{1}\right)
\right \Vert _{L_{\tau}\left(  E\right)  }\\
&  \lesssim \left \Vert b_{1}-\left(  b_{1}\right)  _{E}\right \Vert _{L_{p_{1}%
}\left(  E\right)  }\left \Vert \left(  b_{2}\left(  \cdot \right)  -\left(
b_{2}\right)  _{E}\right)  f_{1}\right \Vert _{L_{k}\left(  E\right)  }\\
&  \lesssim \left \Vert b_{1}-\left(  b_{1}\right)  _{E}\right \Vert _{L_{p_{1}%
}\left(  E\right)  }\left \Vert b_{2}-\left(  b_{2}\right)  _{E}\right \Vert
_{L_{p_{2}}\left(  2kE\right)  }\left \Vert f\right \Vert _{L_{p}\left(
2kE\right)  },
\end{align*}
where $1<k<\infty$, such that $\frac{1}{k}=\frac{1}{p_{2}}+\frac{1}{p}%
=\frac{1}{\tau}$. By (\ref{1}) and (\ref{2}), we get%
\[
F_{2}\lesssim \Vert b_{1}\Vert_{LC_{p_{1},\lambda_{1},P}^{\left \{
x_{0}\right \}  }}\Vert b_{2}\Vert_{LC_{p_{2},\lambda_{2},P}^{\left \{
x_{0}\right \}  }}r^{\frac{\gamma}{q}}\int \limits_{2kr}^{\infty}\left(
1+\ln \frac{t}{r}\right)  ^{2}\frac{\Vert f\Vert_{L_{p}(E(x_{0},t))}}%
{t^{\frac{\gamma}{p}-\gamma \left(  \lambda_{1}+\lambda_{2}\right)  +1}}dt.
\]
In a similar way, $F_{3}$ has the same estimate as above, so we omit the
details. Then we have that%
\[
F_{3}\lesssim \Vert b_{1}\Vert_{LC_{p_{1},\lambda_{1},P}^{\left \{
x_{0}\right \}  }}\Vert b_{2}\Vert_{LC_{p_{2},\lambda_{2},P}^{\left \{
x_{0}\right \}  }}r^{\frac{\gamma}{q}}\int \limits_{2kr}^{\infty}\left(
1+\ln \frac{t}{r}\right)  ^{2}\frac{\Vert f\Vert_{L_{p}(E(x_{0},t))}}%
{t^{\frac{\gamma}{p}-\gamma \left(  \lambda_{1}+\lambda_{2}\right)  +1}}dt.
\]
Now let us consider the term $F_{4}$. Let $1<q,\tau<\infty$, such that
$\frac{1}{q}=\frac{1}{\tau}+\frac{1}{p}$, $\frac{1}{\tau}=\frac{1}{p_{1}%
}+\frac{1}{p_{2}}$. Then by the boundedness of $T_{\Omega}^{P}$ on $L_{p}$
(see Theorem 1.2. in \cite{Gurbuz2}), H\"{o}lder's inequality and (\ref{2}),
we obtain%
\begin{align*}
F_{4}  & =\left \Vert T_{\Omega}^{P}\left(  \left(  b_{1}-\left(  b_{1}\right)
_{E}\right)  \left(  b_{2}-\left(  b_{2}\right)  _{E}\right)  f_{1}\right)
\right \Vert _{L_{q}\left(  E\right)  }\\
& \lesssim \left \Vert \left(  b_{1}-\left(  b_{1}\right)  _{E}\right)  \left(
b_{2}-\left(  b_{2}\right)  _{E}\right)  f_{1}\right \Vert _{L_{q}\left(
E\right)  }\\
& \lesssim \left \Vert \left(  b_{1}-\left(  b_{1}\right)  _{E}\right)  \left(
b_{2}-\left(  b_{2}\right)  _{E}\right)  \right \Vert _{L_{\tau}\left(
E\right)  }\left \Vert f_{1}\right \Vert _{L_{p}\left(  E\right)  }\\
& \lesssim \left \Vert b_{1}-\left(  b_{1}\right)  _{E}\right \Vert _{L_{p_{1}%
}\left(  2kE\right)  }\left \Vert b_{2}-\left(  b_{2}\right)  _{E}\right \Vert
_{L_{p_{2}}\left(  2kE\right)  }\left \Vert f\right \Vert _{L_{p}\left(
2kE\right)  }\\
& \lesssim \Vert b_{1}\Vert_{LC_{p_{1},\lambda_{1},P}^{\left \{  x_{0}\right \}
}}\Vert b_{2}\Vert_{LC_{p_{2},\lambda_{2},P}^{\left \{  x_{0}\right \}  }%
}r^{\frac{\gamma}{q}}\int \limits_{2kr}^{\infty}\left(  1+\ln \frac{t}%
{r}\right)  ^{2}\frac{\Vert f\Vert_{L_{p}(E(x_{0},t))}}{t^{\frac{\gamma}%
{p}-\gamma \left(  \lambda_{1}+\lambda_{2}\right)  +1}}dt.
\end{align*}
Combining all the estimates of $F_{1}$, $F_{2}$, $F_{3}$, $F_{4}$; we get%
\[
F=\left \Vert [\left(  b_{1},b_{2}\right)  ,T_{\Omega}^{P}]f_{1}\right \Vert
_{L_{q}\left(  E\right)  }\lesssim \Vert b_{1}\Vert_{LC_{p_{1},\lambda_{1}%
,P}^{\left \{  x_{0}\right \}  }}\Vert b_{2}\Vert_{LC_{p_{2},\lambda_{2}%
,P}^{\left \{  x_{0}\right \}  }}r^{\frac{\gamma}{q}}\int \limits_{2kr}^{\infty
}\left(  1+\ln \frac{t}{r}\right)  ^{2}\frac{\Vert f\Vert_{L_{p}(E(x_{0},t))}%
}{t^{\frac{\gamma}{p}-\gamma \left(  \lambda_{1}+\lambda_{2}\right)  +1}}dt.
\]

Now, let us estimate $G=\left \Vert [\left(  b_{1},b_{2}\right)  ,T_{\Omega
}^{P}]f_{2}\right \Vert _{L_{q}\left(  E\right)  }$. For $G$, it's similar to
(\ref{0}) we also write%
\begin{align*}
G  & =\left \Vert [\left(  b_{1},b_{2}\right)  ,T_{\Omega}^{P}]f_{2}\right \Vert
_{L_{q}\left(  E\right)  }\lesssim \\
& \left \Vert \left(  b_{1}-\left(  b_{1}\right)  _{E}\right)  \left(
b_{2}\left(  x\right)  -\left(  b_{2}\right)  _{E}\right)  T_{\Omega}^{P}%
f_{2}\right \Vert _{L_{q}\left(  E\right)  }\\
& +\left \Vert \left(  b_{1}-\left(  b_{1}\right)  _{E}\right)  T_{\Omega}%
^{P}\left(  \left(  b_{2}-\left(  b_{2}\right)  _{E}\right)  f_{2}\right)
\right \Vert _{L_{q}\left(  E\right)  }\\
& +\left \Vert \left(  b_{2}-\left(  b_{2}\right)  _{E}\right)  T_{\Omega}%
^{P}\left(  \left(  b_{1}-\left(  b_{1}\right)  _{E}\right)  f_{2}\right)
\right \Vert _{L_{q}\left(  E\right)  }\\
& +\left \Vert T_{\Omega}^{P}\left(  \left(  b_{1}-\left(  b_{1}\right)
_{E}\right)  \left(  b_{2}-\left(  b_{2}\right)  _{E}\right)  f_{2}\right)
\right \Vert _{L_{q}\left(  E\right)  }\\
& \equiv G_{1}+G_{2}+G_{3}+G_{4}.
\end{align*}
To estimate $G_{1}$, let $1<q,\tau<\infty$, such that $\frac{1}{q}=\frac
{1}{\tau}+\frac{1}{p}$, $\frac{1}{\tau}=\frac{1}{p_{1}}+\frac{1}{p_{2}}$.
Then, using H\"{o}lder's inequality and by (\ref{1}), we have%
\begin{align*}
G_{1}  & =\left \Vert \left(  b_{1}-\left(  b_{1}\right)  _{E}\right)  \left(
b_{2}\left(  x\right)  -\left(  b_{2}\right)  _{E}\right)  T_{\Omega}^{P}%
f_{2}\right \Vert _{L_{q}\left(  E\right)  }\\
& \lesssim \left \Vert \left(  b_{1}-\left(  b_{1}\right)  _{E}\right)  \left(
b_{2}-\left(  b_{2}\right)  _{E}\right)  \right \Vert _{L_{\tau}\left(
E\right)  }\left \Vert T_{\Omega}^{P}f_{2}\right \Vert _{L_{p}\left(  E\right)
}\\
& \lesssim \left \Vert b_{1}-\left(  b_{1}\right)  _{E}\right \Vert _{L_{p_{1}%
}\left(  E\right)  }\left \Vert b_{2}-\left(  b_{2}\right)  _{E}\right \Vert
_{L_{p_{2}}\left(  E\right)  }r^{\frac{\gamma}{_{p}}}\int \limits_{2kr}%
^{\infty}\Vert f\Vert_{L_{p}(E(x_{0},t))}t^{-\frac{\gamma}{p}-1}dt\\
& \lesssim \Vert b_{1}\Vert_{LC_{p_{1},\lambda_{1},P}^{\left \{  x_{0}\right \}
}}\Vert b_{2}\Vert_{LC_{p_{2},\lambda_{2},P}^{\left \{  x_{0}\right \}  }%
}r^{\gamma \left(  \frac{1}{p_{1}}+\frac{1}{p_{2}}+\frac{1}{p}\right)  }\\
& \times \int \limits_{2kr}^{\infty}\left(  1+\ln \frac{t}{r}\right)
^{2}t^{-\frac{\gamma}{p}+\gamma \left(  \lambda_{1}+\lambda_{2}\right)
-1}\Vert f\Vert_{L_{p}(E(x_{0},t))}dt\\
& \lesssim \Vert b_{1}\Vert_{LC_{p_{1},\lambda_{1},P}^{\left \{  x_{0}\right \}
}}\Vert b_{2}\Vert_{LC_{p_{2},\lambda_{2},P}^{\left \{  x_{0}\right \}  }%
}r^{\frac{\gamma}{q}}\int \limits_{2kr}^{\infty}\left(  1+\ln \frac{t}%
{r}\right)  ^{2}\frac{\Vert f\Vert_{L_{p}(E(x_{0},t))}}{t^{\frac{\gamma}%
{p}-\gamma \left(  \lambda_{1}+\lambda_{2}\right)  +1}}dt,
\end{align*}
where in the second inequality we have used the following fact:

It is clear that $x\in E$, $y\in \left(  2kE\right)  ^{C}$ implies
\begin{equation}
\frac{1}{2k}\rho \left(  x_{0}-y\right)  \leq \rho \left(  x-y\right)  \leq
\frac{3k}{2}\rho \left(  x_{0}-y\right)  .\label{7}%
\end{equation}
Hence, we get%
\[
\left \vert T_{\Omega}^{P}f_{2}\left(  x\right)  \right \vert \leq2^{\gamma
}c_{1}\int \limits_{\left(  2kE\right)  ^{C}}\frac{\left \vert f\left(
y\right)  \right \vert \left \vert \Omega \left(  x-y\right)  \right \vert }%
{\rho \left(  x_{0}-y\right)  ^{\gamma}}dy.
\]

By Fubini's theorem, we have%
\begin{align*}
\int \limits_{\left(  2kE\right)  ^{C}}\frac{\left \vert f\left(  y\right)
\right \vert \left \vert \Omega \left(  x-y\right)  \right \vert }{\rho \left(
x_{0}-y\right)  ^{\gamma}}dy  & \approx \int \limits_{\left(  2kE\right)  ^{C}%
}\left \vert f\left(  y\right)  \right \vert \left \vert \Omega \left(
x-y\right)  \right \vert \int \limits_{\rho \left(  x_{0}-y\right)  }^{\infty
}\frac{dt}{t^{\gamma+1}}dy\\
& \approx \int \limits_{2kr}^{\infty}\int \limits_{2kr\leq \rho \left(
x_{0}-y\right)  \leq t}\left \vert f\left(  y\right)  \right \vert \left \vert
\Omega \left(  x-y\right)  \right \vert dy\frac{dt}{t^{\gamma+1}}\\
& \lesssim \int \limits_{2kr}^{\infty}\int \limits_{E\left(  x_{0},t\right)
}\left \vert f\left(  y\right)  \right \vert \left \vert \Omega \left(
x-y\right)  \right \vert dy\frac{dt}{t^{\gamma+1}}.
\end{align*}
Applying H\"{o}lder's inequality, we get%
\begin{align}
& \int \limits_{\left(  2kE\right)  ^{C}}\frac{\left \vert f\left(  y\right)
\right \vert \left \vert \Omega \left(  x-y\right)  \right \vert }{\rho \left(
x_{0}-y\right)  ^{\gamma}}dy\nonumber \\
& \lesssim \int \limits_{2kr}^{\infty}\left \Vert f\right \Vert _{L_{p}\left(
E\left(  x_{0},t\right)  \right)  }\left \Vert \Omega \left(  x-\cdot \right)
\right \Vert _{L_{s}\left(  E\left(  x_{0},t\right)  \right)  }\left \vert
E\left(  x_{0},t\right)  \right \vert ^{1-\frac{1}{p}-\frac{1}{s}}\frac
{dt}{t^{\gamma+1}}.\label{e310}%
\end{align}

For $x\in E\left(  x_{0},t\right)  $, notice that $\Omega$ is $A_{t}%
$-homogenous of degree zero and $\Omega \in L_{s}(S^{n-1})$, $s>1$. Then, we
obtain%
\begin{align}
\left(  \int \limits_{E\left(  x_{0},t\right)  }\left \vert \Omega \left(
x-y\right)  \right \vert ^{s}dy\right)  ^{\frac{1}{s}}  & =\left(
\int \limits_{E\left(  x-x_{0},t\right)  }\left \vert \Omega \left(  z\right)
\right \vert ^{s}dz\right)  ^{\frac{1}{s}}\nonumber \\
& \leq \left(  \int \limits_{E\left(  0,t+\left \vert x-x_{0}\right \vert \right)
}\left \vert \Omega \left(  z\right)  \right \vert ^{s}dz\right)  ^{\frac{1}{s}%
}\nonumber \\
& \leq \left(  \int \limits_{E\left(  0,2t\right)  }\left \vert \Omega \left(
z\right)  \right \vert ^{s}dz\right)  ^{\frac{1}{s}}\nonumber \\
& =\left(  \int \limits_{S^{n-1}}\int \limits_{0}^{2t}\left \vert \Omega \left(
z^{\prime}\right)  \right \vert ^{s}d\sigma \left(  z^{\prime}\right)
r^{n-1}dr\right)  ^{\frac{1}{s}}\nonumber \\
& =C\left \Vert \Omega \right \Vert _{L_{s}\left(  S^{n-1}\right)  }\left \vert
E\left(  x_{0},2t\right)  \right \vert ^{\frac{1}{s}}.\label{e311}%
\end{align}
Thus, by (\ref{e311}), it follows that:%
\[
\left \vert T_{\Omega}^{P}f_{2}\left(  x\right)  \right \vert \lesssim
\int \limits_{2kr}^{\infty}\left \Vert f\right \Vert _{L_{p}\left(  E\left(
x_{0},t\right)  \right)  }\frac{dt}{t^{\frac{\gamma}{p}+1}}.
\]

Moreover, for all $p\in \left[  1,\infty \right)  $ the inequality%
\[
\left \Vert T_{\Omega}^{P}f_{2}\right \Vert _{L_{p}\left(  E\right)  }\lesssim
r^{\frac{\gamma}{p}}\int \limits_{2kr}^{\infty}\left \Vert f\right \Vert
_{L_{p}\left(  E\left(  x_{0},t\right)  \right)  }\frac{dt}{t^{\frac{\gamma
}{p}+1}}%
\]
is valid.

On the other hand, for the estimates used in $G_{2}$, $G_{3}$, we have to
prove the below inequality:%
\begin{equation}
\left \vert T_{\Omega}^{P}\left(  \left(  b_{2}-\left(  b_{2}\right)
_{E}\right)  f_{2}\right)  \left(  x\right)  \right \vert \lesssim \left \Vert
b_{2}\right \Vert _{LC_{p_{2},\lambda_{2},P}^{\left \{  x_{0}\right \}  }}%
{\displaystyle \int \limits_{2kr}^{\infty}}
\left(  1+\ln \frac{t}{r}\right)  t^{-\frac{\gamma}{p}+\gamma \lambda_{2}%
-1}\left \Vert f\right \Vert _{L_{p}\left(  E\left(  x_{0},t\right)  \right)
}dt.\label{11}%
\end{equation}
Indeed, when $s^{\prime}\leq q$, for $x\in E$, by Fubini's theorem and
applying H\"{o}lder's inequality and from (\ref{b}), (\ref{c}), (\ref{e311})
we have

$\left \vert T_{\Omega}^{P}\left(  \left(  b_{2}\left(  \cdot \right)  -\left(
b_{2}\right)  _{E}\right)  f_{2}\right)  \left(  x\right)  \right \vert
\lesssim%
{\displaystyle \int \limits_{\left(  2kE\right)  ^{C}}}
\left \vert b_{2}\left(  y\right)  -\left(  b_{2}\right)  _{E}\right \vert
\left \vert \Omega \left(  x-y\right)  \right \vert \frac{\left \vert f\left(
y\right)  \right \vert }{\rho \left(  x-y\right)  ^{\gamma}}dy$

$\lesssim%
{\displaystyle \int \limits_{\left(  2kE\right)  ^{C}}}
\left \vert b_{2}\left(  y\right)  -\left(  b_{2}\right)  _{E}\right \vert
\left \vert \Omega \left(  x-y\right)  \right \vert \frac{\left \vert f\left(
y\right)  \right \vert }{\rho \left(  x_{0}-y\right)  ^{\gamma}}dy$

$\approx%
{\displaystyle \int \limits_{2kr}^{\infty}}
{\displaystyle \int \limits_{2kr<\rho \left(  x_{0}-y\right)  <t}}
\left \vert b_{2}\left(  y\right)  -\left(  b_{2}\right)  _{E}\right \vert
\left \vert \Omega \left(  x-y\right)  \right \vert \left \vert f\left(  y\right)
\right \vert dy\frac{dt}{t^{\gamma+1}}$

$\lesssim%
{\displaystyle \int \limits_{2kr}^{\infty}}
{\displaystyle \int \limits_{E\left(  x_{0},t\right)  }}
\left \vert b_{2}\left(  y\right)  -\left(  b_{2}\right)  _{E\left(
x_{0},t\right)  }\right \vert \left \vert \Omega \left(  x-y\right)  \right \vert
\left \vert f\left(  y\right)  \right \vert dy\frac{dt}{t^{\gamma+1}}$

$+%
{\displaystyle \int \limits_{2kr}^{\infty}}
\left \vert \left(  b_{2}\right)  _{E\left(  x_{0},r\right)  }-\left(
b_{2}\right)  _{E\left(  x_{0},t\right)  }\right \vert
{\displaystyle \int \limits_{E\left(  x_{0},t\right)  }}
\left \vert \Omega \left(  x-y\right)  \right \vert \left \vert f\left(  y\right)
\right \vert dy\frac{dt}{t^{\gamma+1}}$

$\lesssim%
{\displaystyle \int \limits_{2kr}^{\infty}}
\left \Vert b_{2}\left(  \cdot \right)  -\left(  b_{2}\right)  _{E\left(
x_{0},t\right)  }\right \Vert _{L_{p_{2}}\left(  E\left(  x_{0},t\right)
\right)  }\left \Vert \Omega \left(  \cdot-y\right)  \right \Vert _{L_{s}\left(
E\left(  x_{0},t\right)  \right)  }\left \Vert f\right \Vert _{L_{p}\left(
E\left(  x_{0},t\right)  \right)  }\left \vert E\left(  x_{0},t\right)
\right \vert ^{1-\frac{1}{p_{2}}-\frac{1}{s}-\frac{1}{p}}\frac{dt}{t^{\gamma
+1}}$

$+%
{\displaystyle \int \limits_{2kr}^{\infty}}
\left \vert \left(  b_{2}\right)  _{E\left(  x_{0},r\right)  }-\left(
b_{2}\right)  _{E\left(  x_{0},t\right)  }\right \vert \left \Vert f\right \Vert
_{L_{p}\left(  E\left(  x_{0},t\right)  \right)  }\left \Vert \Omega \left(
\cdot-y\right)  \right \Vert _{L_{s}\left(  E\left(  x_{0},t\right)  \right)
}\left \vert E\left(  x_{0},t\right)  \right \vert ^{1-\frac{1}{p}-\frac{1}{s}%
}\frac{dt}{t^{\gamma+1}}$

$\lesssim%
{\displaystyle \int \limits_{2kr}^{\infty}}
\left \Vert b_{2}\left(  \cdot \right)  -\left(  b_{2}\right)  _{E\left(
x_{0},t\right)  }\right \Vert _{L_{p_{2}}\left(  E\left(  x_{0},t\right)
\right)  }\left \Vert f\right \Vert _{L_{p}\left(  E\left(  x_{0},t\right)
\right)  }t^{-1-\frac{\gamma}{p_{2}}-\frac{\gamma}{p}}dt$

$+\left \Vert b_{2}\right \Vert _{LC_{p_{2},\lambda_{2},P}^{\left \{
x_{0}\right \}  }}%
{\displaystyle \int \limits_{2kr}^{\infty}}
\left(  1+\ln \frac{t}{r}\right)  \left \Vert f\right \Vert _{L_{p}\left(
E\left(  x_{0},t\right)  \right)  }t^{-1-\frac{\gamma}{p}+\gamma \lambda_{2}%
}dt$

$\lesssim \left \Vert b_{2}\right \Vert _{LC_{p_{2},\lambda_{2},P}^{\left \{
x_{0}\right \}  }}%
{\displaystyle \int \limits_{2kr}^{\infty}}
\left(  1+\ln \frac{t}{r}\right)  t^{-\frac{\gamma}{p}+\gamma \lambda_{2}%
-1}\left \Vert f\right \Vert _{L_{p}\left(  E\left(  x_{0},t\right)  \right)
}dt.$

This completes the proof of inequality (\ref{11}).

Let $1<\tau<\infty$, such that $\frac{1}{q}=\frac{1}{p_{1}}+\frac{1}{\tau} $.
Then, using H\"{o}lder's inequality and from (\ref{11}) and (\ref{c}), we get%
\begin{align*}
G_{2} &  =\left \Vert \left(  b_{1}-\left(  b_{1}\right)  _{E}\right)
T_{\Omega}^{P}\left(  \left(  b_{2}-\left(  b_{2}\right)  _{E}\right)
f_{2}\right)  \right \Vert _{L_{q}\left(  E\right)  }\\
&  \lesssim \left \Vert b_{1}-\left(  b_{1}\right)  _{E}\right \Vert _{L_{p_{1}%
}\left(  E\right)  }\left \Vert T_{\Omega}^{P}\left(  \left(  b_{2}\left(
\cdot \right)  -\left(  b_{2}\right)  _{E}\right)  f_{2}\right)  \right \Vert
_{L_{\tau}\left(  E\right)  }\\
&  \lesssim \Vert b_{1}\Vert_{LC_{p_{1},\lambda_{1},P}^{\left \{  x_{0}\right \}
}}\Vert b_{2}\Vert_{LC_{p_{2},\lambda_{2},P}^{\left \{  x_{0}\right \}  }%
}r^{\frac{\gamma}{q}}\int \limits_{2kr}^{\infty}\left(  1+\ln \frac{t}%
{r}\right)  ^{2}\frac{\Vert f\Vert_{L_{p}(E(x_{0},t))}}{t^{\frac{\gamma}%
{p}-\gamma \left(  \lambda_{1}+\lambda_{2}\right)  +1}}dt.
\end{align*}
Similarly, $G_{3}$ has the same estimate above, so here we omit the details.
Then the inequality%
\begin{align*}
G_{3}  & =\left \Vert \left(  b_{2}-\left(  b_{2}\right)  _{E}\right)
T_{\Omega}^{P}\left(  \left(  b_{1}-\left(  b_{1}\right)  _{E}\right)
f_{2}\right)  \right \Vert _{L_{q}\left(  E\right)  }\\
& \lesssim \Vert b_{1}\Vert_{LC_{p_{1},\lambda_{1},P}^{\left \{  x_{0}\right \}
}}\Vert b_{2}\Vert_{LC_{p_{2},\lambda_{2},P}^{\left \{  x_{0}\right \}  }%
}r^{\frac{\gamma}{q}}\int \limits_{2kr}^{\infty}\left(  1+\ln \frac{t}%
{r}\right)  ^{2}\frac{\Vert f\Vert_{L_{p}(E(x_{0},t))}}{t^{\frac{\gamma}%
{p}-\gamma \left(  \lambda_{1}+\lambda_{2}\right)  +1}}dt
\end{align*}
is valid.

Now, let us estimate $G_{4}=\left \Vert T_{\Omega}^{P}\left(  \left(
b_{1}-\left(  b_{1}\right)  _{E}\right)  \left(  b_{2}-\left(  b_{2}\right)
_{E}\right)  f_{2}\right)  \right \Vert _{L_{q}\left(  E\right)  }$. It's
similar to the estimate of (\ref{11}), for any $x\in E$, we also write

$\left \vert T_{\Omega}^{P}\left(  \left(  b_{1}-\left(  b_{1}\right)
_{E}\right)  \left(  b_{2}-\left(  b_{2}\right)  _{E}\right)  f_{2}\right)
\left(  x\right)  \right \vert $

$\lesssim%
{\displaystyle \int \limits_{2kr}^{\infty}}
{\displaystyle \int \limits_{E\left(  x_{0},t\right)  }}
\left \vert b_{1}\left(  y\right)  -\left(  b_{1}\right)  _{E\left(
x_{0},t\right)  }\right \vert \left \vert b_{2}\left(  y\right)  -\left(
b_{2}\right)  _{E\left(  x_{0},t\right)  }\right \vert \left \vert
\Omega(x-y)\right \vert \left \vert f\left(  y\right)  \right \vert dy\frac
{dt}{t^{\gamma+1}}$

$+%
{\displaystyle \int \limits_{2kr}^{\infty}}
{\displaystyle \int \limits_{E\left(  x_{0},t\right)  }}
\left \vert b_{1}\left(  y\right)  -\left(  b_{1}\right)  _{E\left(
x_{0},t\right)  }\right \vert \left \vert \left(  b_{2}\right)  _{E\left(
x_{0},t\right)  }-\left(  b_{2}\right)  _{E\left(  x_{0},r\right)
}\right \vert \left \vert \Omega(x-y)\right \vert \left \vert f\left(  y\right)
\right \vert dy\frac{dt}{t^{\gamma+1}}$

$+%
{\displaystyle \int \limits_{2kr}^{\infty}}
{\displaystyle \int \limits_{E\left(  x_{0},t\right)  }}
\left \vert \left(  b_{1}\right)  _{E\left(  x_{0},t\right)  }-\left(
b_{2}\right)  _{E\left(  x_{0},r\right)  }\right \vert \left \vert b_{2}\left(
y\right)  -\left(  b_{2}\right)  _{E\left(  x_{0},t\right)  }\right \vert
\left \vert \Omega(x-y)\right \vert \left \vert f\left(  y\right)  \right \vert
dy\frac{dt}{t^{\gamma+1}}$

$+%
{\displaystyle \int \limits_{2kr}^{\infty}}
{\displaystyle \int \limits_{E\left(  x_{0},t\right)  }}
\left \vert \left(  b_{1}\right)  _{E\left(  x_{0},t\right)  }-\left(
b_{2}\right)  _{E\left(  x_{0},r\right)  }\right \vert \left \vert \left(
b_{2}\right)  _{E\left(  x_{0},t\right)  }-\left(  b_{2}\right)  _{E\left(
x_{0},r\right)  }\right \vert \left \vert \Omega(x-y)\right \vert \left \vert
f\left(  y\right)  \right \vert dy\frac{dt}{t^{\gamma+1}}$

$\equiv G_{41}+G_{42}+G_{43}+G_{44}.$

Let us estimate $G_{41}$, $G_{42}$, $G_{43}$, $G_{44}$, respectively.

Firstly, to estimate $G_{41}$, similar to the estimate of (\ref{11}), we get%
\[
G_{41}\lesssim \Vert b_{1}\Vert_{LC_{p_{1},\lambda_{1},P}^{\left \{
x_{0}\right \}  }}\Vert b_{2}\Vert_{LC_{p_{2},\lambda_{2},P}^{\left \{
x_{0}\right \}  }}\int \limits_{2kr}^{\infty}\left(  1+\ln \frac{t}{r}\right)
^{2}\frac{\Vert f\Vert_{L_{p}(E(x_{0},t))}}{t^{\frac{\gamma}{p}-\gamma \left(
\lambda_{1}+\lambda_{2}\right)  +1}}dt.
\]
Secondly, to estimate $G_{42}$ and $G_{43}$, from (\ref{11}), (\ref{b}) and
(\ref{c}), it follows that%
\[
G_{42}\lesssim \Vert b_{1}\Vert_{LC_{p_{1},\lambda_{1},P}^{\left \{
x_{0}\right \}  }}\Vert b_{2}\Vert_{LC_{p_{2},\lambda_{2},P}^{\left \{
x_{0}\right \}  }}\int \limits_{2kr}^{\infty}\left(  1+\ln \frac{t}{r}\right)
^{2}\frac{\Vert f\Vert_{L_{p}(E(x_{0},t))}}{t^{\frac{\gamma}{p}-\gamma \left(
\lambda_{1}+\lambda_{2}\right)  +1}}dt,
\]
and%
\[
G_{43}\lesssim \Vert b_{1}\Vert_{LC_{p_{1},\lambda_{1},P}^{\left \{
x_{0}\right \}  }}\Vert b_{2}\Vert_{LC_{p_{2},\lambda_{2},P}^{\left \{
x_{0}\right \}  }}\int \limits_{2kr}^{\infty}\left(  1+\ln \frac{t}{r}\right)
^{2}\frac{\Vert f\Vert_{L_{p}(E(x_{0},t))}}{t^{\frac{\gamma}{p}-\gamma \left(
\lambda_{1}+\lambda_{2}\right)  +1}}dt.
\]
Finally, to estimate $G_{44}$, similar to the estimate of (\ref{11}) and from
(\ref{b}) and (\ref{c}), we have%
\[
G_{44}\lesssim \Vert b_{1}\Vert_{LC_{p_{1},\lambda_{1},P}^{\left \{
x_{0}\right \}  }}\Vert b_{2}\Vert_{LC_{p_{2},\lambda_{2},P}^{\left \{
x_{0}\right \}  }}\int \limits_{2kr}^{\infty}\left(  1+\ln \frac{t}{r}\right)
^{2}\frac{\Vert f\Vert_{L_{p}(E(x_{0},t))}}{t^{\frac{\gamma}{p}-\gamma \left(
\lambda_{1}+\lambda_{2}\right)  +1}}dt.
\]
By the estimates of $G_{4j}$ above, where $j=1$, $2$, $3$, we know that
\[
\left \vert T_{\Omega}^{P}\left(  \left(  b_{1}-\left(  b_{1}\right)
_{E}\right)  \left(  b_{2}-\left(  b_{2}\right)  _{E}\right)  f_{2}\right)
\left(  x\right)  \right \vert \lesssim \Vert b_{1}\Vert_{LC_{p_{1},\lambda
_{1},P}^{\left \{  x_{0}\right \}  }}\Vert b_{2}\Vert_{LC_{p_{2},\lambda_{2}%
,P}^{\left \{  x_{0}\right \}  }}\int \limits_{2kr}^{\infty}\left(  1+\ln \frac
{t}{r}\right)  ^{2}\frac{\Vert f\Vert_{L_{p}(E(x_{0},t))}}{t^{\frac{\gamma}%
{p}-\gamma \left(  \lambda_{1}+\lambda_{2}\right)  +1}}dt.
\]
Then, we have%
\begin{align*}
G_{4}  & =\left \Vert T_{\Omega}^{P}\left(  \left(  b_{1}-\left(  b_{1}\right)
_{E}\right)  \left(  b_{2}-\left(  b_{2}\right)  _{E}\right)  f_{2}\right)
\right \Vert _{L_{q}\left(  E\right)  }\\
& \lesssim \Vert b_{1}\Vert_{LC_{p_{1},\lambda_{1},P}^{\left \{  x_{0}\right \}
}}\Vert b_{2}\Vert_{LC_{p_{2},\lambda_{2},P}^{\left \{  x_{0}\right \}  }%
}r^{\frac{\gamma}{q}}\int \limits_{2kr}^{\infty}\left(  1+\ln \frac{t}%
{r}\right)  ^{2}\frac{\Vert f\Vert_{L_{p}(E(x_{0},t))}}{t^{\frac{\gamma}%
{p}-\gamma \left(  \lambda_{1}+\lambda_{2}\right)  +1}}dt.
\end{align*}
So, combining all the estimates for $G_{1},$ $G_{2}$, $G_{3}$, $G_{4}$, we get%
\[
G=\left \Vert [\left(  b_{1},b_{2}\right)  ,T_{\Omega}^{P}]f_{2}\right \Vert
_{L_{q}\left(  E\right)  }\lesssim \Vert b_{1}\Vert_{LC_{p_{1},\lambda_{1}%
,P}^{\left \{  x_{0}\right \}  }}\Vert b_{2}\Vert_{LC_{p_{2},\lambda_{2}%
,P}^{\left \{  x_{0}\right \}  }}r^{\frac{\gamma}{q}}\int \limits_{2kr}^{\infty
}\left(  1+\ln \frac{t}{r}\right)  ^{2}\frac{\Vert f\Vert_{L_{p}(E(x_{0},t))}%
}{t^{\frac{\gamma}{p}-\gamma \left(  \lambda_{1}+\lambda_{2}\right)  +1}}dt.
\]
Thus, putting estimates $F$ and $G$ together, we get the desired conclusion%
\[
\left \Vert \lbrack \left(  b_{1},b_{2}\right)  ,T_{\Omega}^{P}]f\right \Vert
_{L_{q}\left(  E\right)  }\lesssim \Vert b_{1}\Vert_{LC_{p_{1},\lambda_{1}%
,P}^{\left \{  x_{0}\right \}  }}\Vert b_{2}\Vert_{LC_{p_{2},\lambda_{2}%
,P}^{\left \{  x_{0}\right \}  }}r^{\frac{\gamma}{q}}\int \limits_{2kr}^{\infty
}\left(  1+\ln \frac{t}{r}\right)  ^{2}\frac{\Vert f\Vert_{L_{p}(E(x_{0},t))}%
}{t^{\frac{\gamma}{p}-\gamma \left(  \lambda_{1}+\lambda_{2}\right)  +1}}dt.
\]
For the case of $p<s$, we can also use the same method, so we omit the
details. This completes the proof of Lemma \ref{lemma2}.
\end{proof}

\begin{lemma}
\label{lemma3}Suppose that $x_{0}\in{\mathbb{R}^{n}}$, $\Omega \in
L_{s}(S^{n-1})$, $1<s\leq \infty$, is $A_{t}$-homogeneous of degree zero. Let
$T_{\Omega,\alpha}^{P}$ be a parabolic linear operator satisfying condition
(\ref{e2}). Let also $0<\alpha<\gamma$ and $1<q,q_{1},p_{i},p<\frac{\gamma
}{\alpha}$ with $\frac{1}{q}=\sum \limits_{i=1}^{m}\frac{1}{p_{i}}+\frac{1}{p}%
$, $\frac{1}{q_{1}}=\frac{1}{q}-\frac{\alpha}{\gamma}$ and $\overrightarrow
{b}\in LC_{p_{i},\lambda_{i},P}^{\left \{  x_{0}\right \}  }({\mathbb{R}^{n}})$
for $0\leq \lambda_{i}<\frac{1}{\gamma}$, $i=1,\ldots,m$.

Then, for $s^{\prime}\leq q$ the inequality
\begin{equation}
\Vert \lbrack \overrightarrow{b},T_{\Omega,\alpha}^{P}]f\Vert_{L_{q_{1}}%
(E(x_{0},r))}\lesssim%
{\displaystyle \prod \limits_{i=1}^{m}}
\Vert \overrightarrow{b}\Vert_{LC_{p_{i},\lambda_{i},P}^{\left \{
x_{0}\right \}  }}r^{\frac{\gamma}{q_{1}}}%
{\displaystyle \int \limits_{2kr}^{\infty}}
\left(  1+\ln \frac{t}{r}\right)  ^{m}\frac{\Vert f\Vert_{L_{p}(E(x_{0},t))}%
}{t^{\gamma \left(  \frac{1}{q_{1}}-\left(
{\displaystyle \sum \limits_{i=1}^{m}}
\lambda_{i}+%
{\displaystyle \sum \limits_{i=1}^{m}}
\frac{1}{p_{i}}\right)  \right)  +1}}dt\label{200*}%
\end{equation}
holds for any ellipsoid $E\left(  x_{0},r\right)  $ and for all $f\in
L_{p}^{loc}({\mathbb{R}^{n}})$. Also, for $q_{1}<s$ the inequality%
\[
\Vert \lbrack \overrightarrow{b},T_{\Omega,\alpha}^{P}]f\Vert_{L_{q_{1}}%
(E(x_{0},r))}\lesssim%
{\displaystyle \prod \limits_{i=1}^{m}}
\Vert \overrightarrow{b}\Vert_{LC_{p_{i},\lambda_{i},P}^{\left \{
x_{0}\right \}  }}\,r^{\frac{\gamma}{q_{1}}-\frac{\gamma}{s}}%
{\displaystyle \int \limits_{2kr}^{\infty}}
\left(  1+\ln \frac{t}{r}\right)  ^{m}\frac{\Vert f\Vert_{L_{p}(E(x_{0},t))}%
}{t^{\gamma \left(  \frac{1}{q_{1}}-\left(  \frac{1}{s}+%
{\displaystyle \sum \limits_{i=1}^{m}}
\lambda_{i}+%
{\displaystyle \sum \limits_{i=1}^{m}}
\frac{1}{p_{i}}\right)  \right)  +1}}dt
\]
holds for any ellipsoid $E\left(  x_{0},r\right)  $ and for all $f\in
L_{p}^{loc}({\mathbb{R}^{n}})$.
\end{lemma}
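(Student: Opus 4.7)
The plan is to imitate the architecture of the proof of Lemma~\ref{lemma2}, replacing $T_{\Omega}^{P}$ by $T_{\Omega,\alpha}^{P}$ and $L_{q}$ by $L_{q_{1}}$, and tracking how the extra factor $\rho(x-y)^{\alpha}$ coming from the size condition~(\ref{e2}) shifts every $t$-exponent. As there, it suffices to treat $m=2$ with $\overrightarrow{b}=(b_{1},b_{2})$; the general case is notationally heavier but structurally identical. Fix $E=E(x_{0},r)$, set $2kE=E(x_{0},2kr)$, and decompose $f=f_{1}+f_{2}$ with $f_{1}=f\chi_{2kE}$, $f_{2}=f\chi_{(2kE)^{C}}$. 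The standard two-fold commutator identity (as exhibited in Lemma~\ref{lemma2}) then yields $\|[(b_{1},b_{2}),T_{\Omega,\alpha}^{P}]f\|_{L_{q_{1}}(E)}\le F+G$ with $F$ coming from $f_{1}$, $G$ from $f_{2}$, each split into four pieces $F_{1},\dots,F_{4}$ and $G_{1},\dots,G_{4}$ as in~(\ref{0}).

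For the local piece $F$, the new input required is the $(L_{p},L_{q_{1}})$-boundedness of the rough parabolic fractional integral $T_{\Omega,\alpha}^{P}$, i.e.\ the fractional analogue of Theorem~1.2 of \cite{Gurbuz2} (supplying the case $\overrightarrow{b}\equiv 0$). Combined with Hölder's inequality in the factorisations $\frac{1}{q_{1}}=\frac{1}{p_{1}}+\frac{1}{p_{2}}+\frac{1}{q_{1}}-\frac{1}{\tau}$ used in Lemma~\ref{lemma2} and the $L_{p_{i}}$-bounds~(\ref{1}),~(\ref{2}) on $b_{i}-(b_{i})_{E}$, each $F_{j}$ is controlled by the right-hand side of~(\ref{200*}); the computation is literally the one done for $F_{1},\dots,F_{4}$ in Lemma~\ref{lemma2}, except that the outer prefactor $r^{\gamma/q}$ is replaced by $r^{\gamma/q_{1}}$ and the denominator in the integrand absorbs the $\alpha$-shift.

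For the global piece $G$, I would mimic the derivation of the pivotal pointwise inequality~(\ref{11}), now based on~(\ref{e2}). For $x\in E$ and $y\in(2kE)^{C}$ the comparability~(\ref{7}) gives $\rho(x-y)\approx\rho(x_{0}-y)$, hence $|T_{\Omega,\alpha}^{P}((b_{j}-(b_{j})_{E})f_{2})(x)|\lesssim\int_{(2kE)^{C}}|b_{j}(y)-(b_{j})_{E}|\,|\Omega(x-y)|\,|f(y)|\,\rho(x_{0}-y)^{\alpha-\gamma}dy$. Writing $\rho(x_{0}-y)^{\alpha-\gamma}=(\gamma-\alpha)\int_{\rho(x_{0}-y)}^{\infty}t^{\alpha-\gamma-1}dt$ and swapping the order of integration by Fubini localises the bound to annuli $E(x_{0},t)\setminus E(x_{0},2kr)$. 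A Hölder split in $L_{p_{j}}\cdot L_{s}\cdot L_{p}$, together with~(\ref{e311}) for $\|\Omega(x-\cdot)\|_{L_{s}(E(x_{0},t))}$ and the oscillation bounds~(\ref{b}),~(\ref{c}), produces the fractional analogue of~(\ref{11}). The four terms $G_{j}$ (with the further split $G_{4}=G_{41}+G_{42}+G_{43}+G_{44}$ of Lemma~\ref{lemma2}) are then treated one by one by layering an outer $L_{p_{i}}$-norm of $b_{i}-(b_{i})_{E}$ and another application of Hölder.

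The main obstacle is exponent bookkeeping: the extra $\alpha/\gamma$ in~(\ref{e2}) has to be propagated through all eight subterms, and one must verify that after combining them the $t$-exponent in the integrand is exactly $\gamma(\frac{1}{q_{1}}-\sum_{i}\lambda_{i}-\sum_{i}\frac{1}{p_{i}})+1$ and the outer prefactor is $r^{\gamma/q_{1}}$, matching~(\ref{200*}). The second inequality, valid for $p<s$ (in fact $q_{1}<s$), follows by the same scheme, replacing the $L_{p}$-boundedness of $T_{\Omega,\alpha}^{P}f_{2}$ by the Kolmogorov-type majorant that yields the gain $r^{\gamma/s}$ in the prefactor, exactly as in the parallel step of Lemma~\ref{lemma2}.
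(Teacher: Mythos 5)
Your proposal is correct and follows essentially the same route as the paper: the same $m=2$ reduction, the splitting $f=f_{1}+f_{2}$, the fourfold commutator decomposition for both pieces, H\"older with the Campanato bounds (\ref{1})--(\ref{2}), the off-diagonal boundedness of $T_{\Omega,\alpha}^{P}$ (Theorem 0.1 of \cite{Gurbuz3}) for the local part, and for the global part the comparability (\ref{7}), Fubini, (\ref{e311}), (\ref{b}), (\ref{c}) yielding the fractional analogue of (\ref{11}), namely (\ref{11*}). The only blemishes are notational: the boundedness used for the term corresponding to $A_{1}$ is $L_{p}\to L_{\overline{q}}$ with $\frac{1}{\overline{q}}=\frac{1}{p}-\frac{\alpha}{\gamma}$ (and $L_{q}\to L_{q_{1}}$ for $A_{4}$), not literally $(L_{p},L_{q_{1}})$, and the second case of the lemma is $q_{1}<s$ rather than $p<s$, as you yourself note.
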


\begin{proof}
Similar to the proof of Lemma \ref{lemma2}, it is sufficient to show that the
conclusion holds for $m=2$. Let $0<\alpha<\gamma$ and $1<q,q_{1},p_{i}%
,p<\frac{\gamma}{\alpha}$ with $\frac{1}{q}=\sum \limits_{i=1}^{m}\frac
{1}{p_{i}}+\frac{1}{p}$, $\frac{1}{q_{1}}=\frac{1}{q}-\frac{\alpha}{\gamma}$
and $\overrightarrow{b}\in LC_{p_{i},\lambda_{i},P}^{\left \{  x_{0}\right \}
}({\mathbb{R}^{n}})$ for $0\leq \lambda_{i}<\frac{1}{\gamma}$, $i=1,\ldots,m$.
As in the proof of Lemma \ref{lemma2}, we split $f=f_{1}+f_{2}$ in form
(\ref{9}) and have%
\[
\left \Vert \lbrack \left(  b_{1},b_{2}\right)  ,T_{\Omega,\alpha}%
^{P}]f\right \Vert _{L_{q_{1}}\left(  E\right)  }\leq \left \Vert \lbrack \left(
b_{1},b_{2}\right)  ,T_{\Omega,\alpha}^{P}]f_{1}\right \Vert _{L_{q_{1}}\left(
E\right)  }+\left \Vert [\left(  b_{1},b_{2}\right)  ,T_{\Omega,\alpha}%
^{P}]f_{2}\right \Vert _{L_{q_{1}}\left(  E\right)  }=:A+B.
\]
Let us estimate $A+B$, respectively.

For $[\left(  b_{1},b_{2}\right)  ,T_{\Omega,\alpha}^{P}]f_{1}\left(
x\right)  $, we have the following decomposition,%
\begin{align*}
\lbrack \left(  b_{1},b_{2}\right)  ,T_{\Omega,\alpha}^{P}]f_{1}\left(
x\right)   & =\left(  b_{1}\left(  x\right)  -\left(  b_{1}\right)
_{E}\right)  \left(  b_{2}\left(  x\right)  -\left(  b_{2}\right)
_{E}\right)  T_{\Omega,\alpha}^{P}f_{1}\left(  x\right) \\
& -\left(  b_{1}\left(  \cdot \right)  -\left(  b_{1}\right)  _{E}\right)
T_{\Omega,\alpha}^{P}\left(  \left(  b_{2}\left(  \cdot \right)  -\left(
b_{2}\right)  _{E}\right)  f_{1}\right)  \left(  x\right) \\
& +\left(  b_{2}\left(  x\right)  -\left(  b_{2}\right)  _{E}\right)
T_{\Omega,\alpha}^{P}\left(  \left(  b_{1}\left(  x\right)  -\left(
b_{1}\right)  _{E}\right)  f_{1}\right)  \left(  x\right)  -\\
& T_{\Omega,\alpha}^{P}\left(  \left(  b_{1}\left(  \cdot \right)  -\left(
b_{1}\right)  _{E}\right)  \left(  b_{2}\left(  \cdot \right)  -\left(
b_{2}\right)  _{E}\right)  f_{1}\right)  \left(  x\right)  .
\end{align*}
Hence, we get%
\begin{align}
A  & =\left \Vert [\left(  b_{1},b_{2}\right)  ,T_{\Omega,\alpha}^{P}%
]f_{1}\right \Vert _{L_{q_{1}}\left(  E\right)  }\lesssim \nonumber \\
& \left \Vert \left(  b_{1}-\left(  b_{1}\right)  _{E}\right)  \left(
b_{2}\left(  x\right)  -\left(  b_{2}\right)  _{E}\right)  T_{\Omega,\alpha
}^{P}f_{1}\right \Vert _{L_{q_{1}}\left(  E\right)  }\nonumber \\
& +\left \Vert \left(  b_{1}-\left(  b_{1}\right)  _{E}\right)  T_{\Omega
,\alpha}^{P}\left(  \left(  b_{2}-\left(  b_{2}\right)  _{E}\right)
f_{1}\right)  \right \Vert _{L_{q_{1}}\left(  E\right)  }\nonumber \\
& +\left \Vert \left(  b_{2}-\left(  b_{2}\right)  _{E}\right)  T_{\Omega
,\alpha}^{P}\left(  \left(  b_{1}-\left(  b_{1}\right)  _{E}\right)
f_{1}\right)  \right \Vert _{L_{q_{1}}\left(  E\right)  }\nonumber \\
& +\left \Vert T_{\Omega,\alpha}^{P}\left(  \left(  b_{1}-\left(  b_{1}\right)
_{E}\right)  \left(  b_{2}-\left(  b_{2}\right)  _{E}\right)  f_{1}\right)
\right \Vert _{L_{q_{1}}\left(  E\right)  }\nonumber \\
& \equiv A_{1}+A_{2}+A_{3}+A_{4}.\label{0*}%
\end{align}
One observes that the estimate of $A_{2}$ is analogous to that of $A_{3}$.
Thus, we will only estimate $A_{1}$, $A_{2}$ and $A_{4}$.

To estimate $A_{1}$, let $1<\overline{q},\overline{r}<\infty$, such
that$\frac{1}{\overline{q}}=\frac{1}{p}-\frac{\alpha}{\gamma},$ $\frac
{1}{q_{1}}=\frac{1}{\overline{r}}+\frac{1}{\overline{q}},\frac{1}{\overline
{r}}=\frac{1}{p_{1}}+\frac{1}{p_{2}}$. Then, using H\"{o}lder's inequality and
by the boundedness of $T_{\Omega,\alpha}^{P}$ from $L_{p}$ into $L_{\overline
{q}}$ (see Theorem 0.1 in \cite{Gurbuz3}) and by (\ref{1}) it follows that:%
\begin{align*}
A_{1}  & =\left \Vert \left(  b_{1}-\left(  b_{1}\right)  _{E}\right)  \left(
b_{2}\left(  x\right)  -\left(  b_{2}\right)  _{E}\right)  T_{\Omega,\alpha
}^{P}f_{1}\right \Vert _{L_{q_{1}}\left(  E\right)  }\\
& \lesssim \left \Vert \left(  b_{1}-\left(  b_{1}\right)  _{E}\right)  \left(
b_{2}-\left(  b_{2}\right)  _{E}\right)  \right \Vert _{L_{\overline{r}}\left(
E\right)  }\left \Vert T_{\Omega,\alpha}^{P}f_{1}\right \Vert _{L_{\overline{q}%
}\left(  E\right)  }\\
& \lesssim \left \Vert b_{1}-\left(  b_{1}\right)  _{B}\right \Vert _{L_{p_{1}%
}\left(  E\right)  }\left \Vert b_{2}-\left(  b_{2}\right)  _{B}\right \Vert
_{L_{p_{2}}\left(  E\right)  }\left \Vert f\right \Vert _{L_{p}\left(
2kE\right)  }\\
& \lesssim \left \Vert b_{1}-\left(  b_{1}\right)  _{E}\right \Vert _{L_{p_{1}%
}\left(  E\right)  }\left \Vert b_{2}-\left(  b_{2}\right)  _{E}\right \Vert
_{L_{p_{2}}\left(  E\right)  }r^{\gamma \left(  \frac{1}{p}-\frac{\alpha
}{\gamma}\right)  }\int \limits_{2kr}^{\infty}\Vert f\Vert_{L_{p}(E(x_{0}%
,t))}\frac{dt}{t^{\frac{\gamma}{p}+1-\alpha}}\\
& \lesssim \Vert b_{1}\Vert_{LC_{p_{1},\lambda_{1},P}^{\left \{  x_{0}\right \}
}}\Vert b_{2}\Vert_{LC_{p_{2},\lambda_{2},P}^{\left \{  x_{0}\right \}  }%
}r^{\gamma \left(  \frac{1}{p_{1}}+\frac{1}{p_{2}}+\frac{1}{p}-\frac{\alpha
}{\gamma}\right)  }\int \limits_{2kr}^{\infty}\left(  1+\ln \frac{t}{r}\right)
^{2}t^{-\frac{\gamma}{p}+\gamma \left(  \lambda_{1}+\lambda_{2}\right)
-1+\alpha}\Vert f\Vert_{L_{p}(E(x_{0},t))}dt\\
& \lesssim \Vert b_{1}\Vert_{LC_{p_{1},\lambda_{1},P}^{\left \{  x_{0}\right \}
}}\Vert b_{2}\Vert_{LC_{p_{2},\lambda_{2},P}^{\left \{  x_{0}\right \}  }%
}r^{\frac{\gamma}{q_{1}}}\int \limits_{2kr}^{\infty}\left(  1+\ln \frac{t}%
{r}\right)  ^{2}\frac{\Vert f\Vert_{L_{p}(E(x_{0},t))}}{t^{\frac{\gamma}%
{q_{1}}-\gamma \left(  \lambda_{1}+\lambda_{2}\right)  -\gamma \left(  \frac
{1}{p_{1}}+\frac{1}{p_{2}}\right)  +1}}dt.
\end{align*}
To estimate $A_{2}$, let $1<\tau<\infty$, such that $\frac{1}{q_{1}}=\frac
{1}{p_{1}}+\frac{1}{\tau}$. Then, similar to the estimates for $A_{1}$, we
have%
\begin{align*}
A_{2} &  =\left \Vert \left(  b_{1}-\left(  b_{1}\right)  _{E}\right)
T_{\Omega,\alpha}^{P}\left(  \left(  b_{2}-\left(  b_{2}\right)  _{E}\right)
f_{1}\right)  \right \Vert _{L_{q_{1}}\left(  E\right)  }\\
&  \lesssim \left \Vert b_{1}-\left(  b_{1}\right)  _{E}\right \Vert _{L_{p_{1}%
}\left(  E\right)  }\left \Vert T_{\Omega,\alpha}^{P}\left(  \left(
b_{2}\left(  \cdot \right)  -\left(  b_{2}\right)  _{E}\right)  f_{1}\right)
\right \Vert _{L_{\tau}\left(  E\right)  }\\
&  \lesssim \left \Vert b_{1}-\left(  b_{1}\right)  _{E}\right \Vert _{L_{p_{1}%
}\left(  E\right)  }\left \Vert \left(  b_{2}\left(  \cdot \right)  -\left(
b_{2}\right)  _{E}\right)  f_{1}\right \Vert _{L_{k}\left(  E\right)  }\\
&  \lesssim \left \Vert b_{1}-\left(  b_{1}\right)  _{E}\right \Vert _{L_{p_{1}%
}\left(  E\right)  }\left \Vert b_{2}-\left(  b_{2}\right)  _{E}\right \Vert
_{L_{p_{2}}\left(  2kE\right)  }\left \Vert f\right \Vert _{L_{p}\left(
2kE\right)  },
\end{align*}
where $1<k<\frac{2\gamma}{\alpha}$, such that $\frac{1}{k}=\frac{1}{p_{2}%
}+\frac{1}{p}=\frac{1}{\tau}+\frac{\alpha}{\gamma}$. By (\ref{1}) and
(\ref{2}), we get%
\[
A_{2}\lesssim \Vert b_{1}\Vert_{LC_{p_{1},\lambda_{1},P}^{\left \{
x_{0}\right \}  }}\Vert b_{2}\Vert_{LC_{p_{2},\lambda_{2},P}^{\left \{
x_{0}\right \}  }}r^{\frac{\gamma}{q_{1}}}\int \limits_{2kr}^{\infty}\left(
1+\ln \frac{t}{r}\right)  ^{2}\frac{\Vert f\Vert_{L_{p}(E(x_{0},t))}}%
{t^{\frac{\gamma}{q_{1}}-\gamma \left(  \lambda_{1}+\lambda_{2}\right)
-\gamma \left(  \frac{1}{p_{1}}+\frac{1}{p_{2}}\right)  +1}}dt.
\]
In a similar way, $A_{3}$ has the same estimate as above, so we omit the
details. Then we have that%
\[
A_{3}\lesssim \Vert b_{1}\Vert_{LC_{p_{1},\lambda_{1},P}^{\left \{
x_{0}\right \}  }}\Vert b_{2}\Vert_{LC_{p_{2},\lambda_{2},P}^{\left \{
x_{0}\right \}  }}r^{\frac{\gamma}{q_{1}}}\int \limits_{2kr}^{\infty}\left(
1+\ln \frac{t}{r}\right)  ^{2}\frac{\Vert f\Vert_{L_{p}(E(x_{0},t))}}%
{t^{\frac{\gamma}{q_{1}}-\gamma \left(  \lambda_{1}+\lambda_{2}\right)
-\gamma \left(  \frac{1}{p_{1}}+\frac{1}{p_{2}}\right)  +1}}dt.
\]
Now let us consider the term $A_{4}$. Let $1<q,\tau<\frac{2\gamma}{\alpha} $,
such that $\frac{1}{q}=\frac{1}{\tau}+\frac{1}{p}$, $\frac{1}{\tau}=\frac
{1}{p_{1}}+\frac{1}{p_{2}}$ and $\frac{1}{q_{1}}=\frac{1}{q}-\frac{\alpha
}{\gamma}$. Then by the boundedness of $T_{\Omega,\alpha}^{P}$ from $L_{q}$
into $L_{q_{1}}$ (see Theorem 0.1 in \cite{Gurbuz3}), H\"{o}lder's inequality
and (\ref{2}), we obtain%
\begin{align*}
A_{4}  & =\left \Vert T_{\Omega,\alpha}^{P}\left(  \left(  b_{1}-\left(
b_{1}\right)  _{E}\right)  \left(  b_{2}-\left(  b_{2}\right)  _{E}\right)
f_{1}\right)  \right \Vert _{L_{q_{1}}\left(  E\right)  }\\
& \lesssim \left \Vert \left(  b_{1}-\left(  b_{1}\right)  _{E}\right)  \left(
b_{2}-\left(  b_{2}\right)  _{E}\right)  f_{1}\right \Vert _{L_{q}\left(
E\right)  }\\
& \lesssim \left \Vert \left(  b_{1}-\left(  b_{1}\right)  _{E}\right)  \left(
b_{2}-\left(  b_{2}\right)  _{E}\right)  \right \Vert _{L_{\tau}\left(
E\right)  }\left \Vert f_{1}\right \Vert _{L_{p}\left(  E\right)  }\\
& \lesssim \left \Vert b_{1}-\left(  b_{1}\right)  _{E}\right \Vert _{L_{p_{1}%
}\left(  2kE\right)  }\left \Vert b_{2}-\left(  b_{2}\right)  _{E}\right \Vert
_{L_{p_{2}}\left(  2kE\right)  }\left \Vert f\right \Vert _{L_{p}\left(
2kE\right)  }\\
& \lesssim \Vert b_{1}\Vert_{LC_{p_{1},\lambda_{1},P}^{\left \{  x_{0}\right \}
}}\Vert b_{2}\Vert_{LC_{p_{2},\lambda_{2},P}^{\left \{  x_{0}\right \}  }%
}r^{\frac{\gamma}{q_{1}}}\int \limits_{2kr}^{\infty}\left(  1+\ln \frac{t}%
{r}\right)  ^{2}\frac{\Vert f\Vert_{L_{p}(E(x_{0},t))}}{t^{\frac{\gamma}%
{q_{1}}-\gamma \left(  \lambda_{1}+\lambda_{2}\right)  -\gamma \left(  \frac
{1}{p_{1}}+\frac{1}{p_{2}}\right)  +1}}dt.
\end{align*}
Combining all the estimates of $A_{1}$, $A_{2}$, $A_{3}$, $A_{4}$; we get%
\[
A=\left \Vert [\left(  b_{1},b_{2}\right)  ,T_{\Omega,\alpha}^{P}%
]f_{1}\right \Vert _{L_{q_{1}}\left(  E\right)  }\lesssim \Vert b_{1}%
\Vert_{LC_{p_{1},\lambda_{1},P}^{\left \{  x_{0}\right \}  }}\Vert b_{2}%
\Vert_{LC_{p_{2},\lambda_{2},P}^{\left \{  x_{0}\right \}  }}r^{\frac{\gamma
}{q_{1}}}\int \limits_{2kr}^{\infty}\left(  1+\ln \frac{t}{r}\right)  ^{2}%
\frac{\Vert f\Vert_{L_{p}(E(x_{0},t))}}{t^{\frac{\gamma}{q_{1}}-\gamma \left(
\lambda_{1}+\lambda_{2}\right)  -\gamma \left(  \frac{1}{p_{1}}+\frac{1}{p_{2}%
}\right)  +1}}dt.
\]
Now, let us estimate $B=\left \Vert [\left(  b_{1},b_{2}\right)  ,T_{\Omega
,\alpha}^{P}]f_{2}\right \Vert _{L_{q_{1}}\left(  E\right)  }$. For $B$, it's
similar to (\ref{0*}) we also write%
\begin{align*}
B  & =\left \Vert [\left(  b_{1},b_{2}\right)  ,T_{\Omega,\alpha}^{P}%
]f_{2}\right \Vert _{L_{q_{1}}\left(  E\right)  }\lesssim \\
& \left \Vert \left(  b_{1}-\left(  b_{1}\right)  _{E}\right)  \left(
b_{2}\left(  x\right)  -\left(  b_{2}\right)  _{E}\right)  T_{\Omega,\alpha
}^{P}f_{2}\right \Vert _{L_{q_{1}}\left(  E\right)  }\\
& +\left \Vert \left(  b_{1}-\left(  b_{1}\right)  _{E}\right)  T_{\Omega
,\alpha}^{P}\left(  \left(  b_{2}-\left(  b_{2}\right)  _{E}\right)
f_{2}\right)  \right \Vert _{L_{q_{1}}\left(  E\right)  }\\
& +\left \Vert \left(  b_{2}-\left(  b_{2}\right)  _{E}\right)  T_{\Omega
,\alpha}^{P}\left(  \left(  b_{1}-\left(  b_{1}\right)  _{E}\right)
f_{2}\right)  \right \Vert _{L_{q_{1}}\left(  E\right)  }\\
& +\left \Vert T_{\Omega,\alpha}^{P}\left(  \left(  b_{1}-\left(  b_{1}\right)
_{E}\right)  \left(  b_{2}-\left(  b_{2}\right)  _{E}\right)  f_{2}\right)
\right \Vert _{L_{q_{1}}\left(  E\right)  }\\
& \equiv B_{1}+B_{2}+B_{3}+B_{4}.
\end{align*}
To estimate $B_{1}$, let $1<p_{1},p_{2}<\frac{2\gamma}{\alpha}$, such that
$\frac{1}{q_{1}}=\frac{1}{\overline{p}}+\frac{1}{\overline{q}}$, $\frac
{1}{\overline{p}}=\frac{1}{p_{1}}+\frac{1}{p_{2}}$ and $\frac{1}{\overline{q}%
}=\frac{1}{p}-\frac{\alpha}{\gamma}$. Then, using H\"{o}lder's inequality and
by (\ref{1}), we have%
\begin{align*}
B_{1}  & =\left \Vert \left(  b_{1}-\left(  b_{1}\right)  _{E}\right)  \left(
b_{2}\left(  x\right)  -\left(  b_{2}\right)  _{E}\right)  T_{\Omega,\alpha
}^{P}f_{2}\right \Vert _{L_{q_{1}}\left(  E\right)  }\\
& \lesssim \left \Vert \left(  b_{1}-\left(  b_{1}\right)  _{E}\right)  \left(
b_{2}-\left(  b_{2}\right)  _{E}\right)  \right \Vert _{L_{\overline{p}}\left(
E\right)  }\left \Vert T_{\Omega,\alpha}^{P}f_{2}\right \Vert _{L_{\overline{q}%
}\left(  E\right)  }\\
& \lesssim \left \Vert b_{1}-\left(  b_{1}\right)  _{E}\right \Vert _{L_{p_{1}%
}\left(  E\right)  }\left \Vert b_{2}-\left(  b_{2}\right)  _{E}\right \Vert
_{L_{p_{2}}\left(  E\right)  }r^{\frac{\gamma}{_{\overline{q}}}}%
\int \limits_{2kr}^{\infty}\Vert f\Vert_{L_{p}(E(x_{0},t))}t^{-\frac{\gamma
}{\overline{q}}-1}dt\\
& \lesssim \Vert b_{1}\Vert_{LC_{p_{1},\lambda_{1},P}^{\left \{  x_{0}\right \}
}}\Vert b_{2}\Vert_{LC_{p_{2},\lambda_{2},P}^{\left \{  x_{0}\right \}  }%
}r^{\gamma \left(  \frac{1}{p_{1}}+\frac{1}{p_{2}}+\frac{1}{p}-\frac{\alpha
}{\gamma}\right)  }\\
& \times \int \limits_{2kr}^{\infty}\left(  1+\ln \frac{t}{r}\right)
^{2}t^{-\frac{\gamma}{p}+\gamma \left(  \lambda_{1}+\lambda_{2}\right)
-1+\alpha}\Vert f\Vert_{L_{p}(E(x_{0},t))}dt\\
& \lesssim \Vert b_{1}\Vert_{LC_{p_{1},\lambda_{1},P}^{\left \{  x_{0}\right \}
}}\Vert b_{2}\Vert_{LC_{p_{2},\lambda_{2},P}^{\left \{  x_{0}\right \}  }%
}r^{\frac{\gamma}{q_{1}}}\int \limits_{2kr}^{\infty}\left(  1+\ln \frac{t}%
{r}\right)  ^{2}\frac{\Vert f\Vert_{L_{p}(E(x_{0},t))}}{t^{\frac{\gamma}%
{q_{1}}-\gamma \left(  \lambda_{1}+\lambda_{2}\right)  -\gamma \left(  \frac
{1}{p_{1}}+\frac{1}{p_{2}}\right)  +1}}dt,
\end{align*}
where in the second inequality we have used the following fact:

When $s^{\prime}\leq \overline{q}$, by (\ref{7}), Fubini's theorem,
H\"{o}lder's inequality and (\ref{e311}), we have%
\begin{align*}
\left \vert T_{\Omega,\alpha}^{P}f_{2}\left(  x\right)  \right \vert  &  \leq
c_{0}\int \limits_{\left(  2kE\right)  ^{C}}\left \vert \Omega \left(
x-y\right)  \right \vert \frac{\left \vert f\left(  y\right)  \right \vert }%
{\rho \left(  x_{0}-y\right)  ^{\gamma-\alpha}}dy\\
&  \approx \int \limits_{2kr}^{\infty}\int \limits_{2kr<\rho \left(
x_{0}-y\right)  <t}\left \vert \Omega \left(  x-y\right)  \right \vert \left \vert
f\left(  y\right)  \right \vert dyt^{-1-\gamma+\alpha}dt\\
&  \lesssim \int \limits_{2kr}^{\infty}\int \limits_{E\left(  x_{0},t\right)
}\left \vert \Omega \left(  x-y\right)  \right \vert \left \vert f\left(
y\right)  \right \vert dyt^{-1-\gamma+\alpha}dt\\
&  \lesssim \int \limits_{2kr}^{\infty}\left \Vert f\right \Vert _{L_{p}\left(
E\left(  x_{0},t\right)  \right)  }\left \Vert \Omega \left(  x-\cdot \right)
\right \Vert _{L_{s}\left(  E\left(  x_{0},t\right)  \right)  }\left \vert
E\left(  x_{0},t\right)  \right \vert ^{1-\frac{1}{p}-\frac{1}{s}}%
t^{-1-\gamma+\alpha}dt\\
&  \lesssim \int \limits_{2kr}^{\infty}\left \Vert f\right \Vert _{L_{p}\left(
E\left(  x_{0},t\right)  \right)  }t^{-1-\frac{\gamma}{\overline{q}}}dt.
\end{align*}
Moreover, for all $p\in \left[  1,\infty \right)  $ the inequality%
\[
\left \Vert T_{\Omega,\alpha}^{P}f_{2}\right \Vert _{L_{p}\left(  E\right)
}\lesssim r^{\frac{\gamma}{\overline{q}}}\int \limits_{2kr}^{\infty}\left \Vert
f\right \Vert _{L_{p}\left(  E\left(  x_{0},t\right)  \right)  }\frac
{dt}{t^{\frac{\gamma}{\overline{q}}+1}}%
\]
is valid.

On the other hand, for the estimates used in $B_{2}$, $B_{3}$, we have to
prove the below inequality:%
\begin{equation}
\left \vert T_{\Omega,\alpha}^{P}\left(  \left(  b_{2}-\left(  b_{2}\right)
_{E}\right)  f_{2}\right)  \left(  x\right)  \right \vert \lesssim \left \Vert
b_{2}\right \Vert _{LC_{p_{2},\lambda_{2},P}^{\left \{  x_{0}\right \}  }}%
{\displaystyle \int \limits_{2kr}^{\infty}}
\left(  1+\ln \frac{t}{r}\right)  t^{-\frac{\gamma}{p}+\gamma \lambda
_{2}-1+\alpha}\left \Vert f\right \Vert _{L_{p}\left(  E\left(  x_{0},t\right)
\right)  }dt.\label{11*}%
\end{equation}
Indeed, when $s^{\prime}\leq q$, for $x\in E$, by Fubini's theorem and
applying H\"{o}lder's inequality and from (\ref{b}), (\ref{c}), (\ref{e311})
we have

$\left \vert T_{\Omega,\alpha}^{P}\left(  \left(  b_{2}\left(  \cdot \right)
-\left(  b_{2}\right)  _{E}\right)  f_{2}\right)  \left(  x\right)
\right \vert \lesssim%
{\displaystyle \int \limits_{\left(  2kE\right)  ^{C}}}
\left \vert b_{2}\left(  y\right)  -\left(  b_{2}\right)  _{E}\right \vert
\left \vert \Omega \left(  x-y\right)  \right \vert \frac{\left \vert f\left(
y\right)  \right \vert }{\rho \left(  x-y\right)  ^{\gamma-\alpha}}dy$

$\lesssim%
{\displaystyle \int \limits_{\left(  2kE\right)  ^{C}}}
\left \vert b_{2}\left(  y\right)  -\left(  b_{2}\right)  _{E}\right \vert
\left \vert \Omega \left(  x-y\right)  \right \vert \frac{\left \vert f\left(
y\right)  \right \vert }{\rho \left(  x_{0}-y\right)  ^{\gamma-\alpha}}dy$

$\approx%
{\displaystyle \int \limits_{2kr}^{\infty}}
{\displaystyle \int \limits_{2kr<\rho \left(  x_{0}-y\right)  <t}}
\left \vert b_{2}\left(  y\right)  -\left(  b_{2}\right)  _{E}\right \vert
\left \vert \Omega \left(  x-y\right)  \right \vert \left \vert f\left(  y\right)
\right \vert dy\frac{dt}{t^{\gamma-\alpha+1}} $

$\lesssim%
{\displaystyle \int \limits_{2kr}^{\infty}}
{\displaystyle \int \limits_{E\left(  x_{0},t\right)  }}
\left \vert b_{2}\left(  y\right)  -\left(  b_{2}\right)  _{E\left(
x_{0},t\right)  }\right \vert \left \vert \Omega \left(  x-y\right)  \right \vert
\left \vert f\left(  y\right)  \right \vert dy\frac{dt}{t^{\gamma-\alpha+1}}$

$+%
{\displaystyle \int \limits_{2kr}^{\infty}}
\left \vert \left(  b_{2}\right)  _{E\left(  x_{0},r\right)  }-\left(
b_{2}\right)  _{E\left(  x_{0},t\right)  }\right \vert
{\displaystyle \int \limits_{E\left(  x_{0},t\right)  }}
\left \vert \Omega \left(  x-y\right)  \right \vert \left \vert f\left(  y\right)
\right \vert dy\frac{dt}{t^{\gamma-\alpha+1}}$

$\lesssim%
{\displaystyle \int \limits_{2kr}^{\infty}}
\left \Vert b_{2}\left(  \cdot \right)  -\left(  b_{2}\right)  _{E\left(
x_{0},t\right)  }\right \Vert _{L_{p_{2}}\left(  E\left(  x_{0},t\right)
\right)  }\left \Vert \Omega \left(  \cdot-y\right)  \right \Vert _{L_{s}\left(
E\left(  x_{0},t\right)  \right)  }\left \Vert f\right \Vert _{L_{p}\left(
E\left(  x_{0},t\right)  \right)  }\left \vert E\left(  x_{0},t\right)
\right \vert ^{1-\frac{1}{p_{2}}-\frac{1}{s}-\frac{1}{p}}\frac{dt}%
{t^{\gamma-\alpha+1}}$

$+%
{\displaystyle \int \limits_{2kr}^{\infty}}
\left \vert \left(  b_{2}\right)  _{E\left(  x_{0},r\right)  }-\left(
b_{2}\right)  _{E\left(  x_{0},t\right)  }\right \vert \left \Vert f\right \Vert
_{L_{p}\left(  E\left(  x_{0},t\right)  \right)  }\left \Vert \Omega \left(
\cdot-y\right)  \right \Vert _{L_{s}\left(  E\left(  x_{0},t\right)  \right)
}\left \vert E\left(  x_{0},t\right)  \right \vert ^{1-\frac{1}{p}-\frac{1}{s}%
}\frac{dt}{t^{\gamma-\alpha+1}}$

$\lesssim%
{\displaystyle \int \limits_{2kr}^{\infty}}
\left \Vert b_{2}\left(  \cdot \right)  -\left(  b_{2}\right)  _{E\left(
x_{0},t\right)  }\right \Vert _{L_{p_{2}}\left(  E\left(  x_{0},t\right)
\right)  }\left \Vert f\right \Vert _{L_{p}\left(  E\left(  x_{0},t\right)
\right)  }t^{-1-\gamma \left(  \frac{1}{p_{2}}+\frac{1}{p}\right)  +\alpha}dt$

$+\left \Vert b_{2}\right \Vert _{LC_{p_{2},\lambda_{2},P}^{\left \{
x_{0}\right \}  }}%
{\displaystyle \int \limits_{2kr}^{\infty}}
\left(  1+\ln \frac{t}{r}\right)  \left \Vert f\right \Vert _{L_{p}\left(
E\left(  x_{0},t\right)  \right)  }t^{-1-\frac{\gamma}{p}+\gamma \lambda
_{2}+\alpha}dt$

$\lesssim \left \Vert b_{2}\right \Vert _{LC_{p_{2},\lambda_{2},P}^{\left \{
x_{0}\right \}  }}%
{\displaystyle \int \limits_{2kr}^{\infty}}
\left(  1+\ln \frac{t}{r}\right)  t^{-\frac{\gamma}{p}+\gamma \lambda
_{2}-1+\alpha}\left \Vert f\right \Vert _{L_{p}\left(  E\left(  x_{0},t\right)
\right)  }dt.$

This completes the proof of inequality (\ref{11*}).

Let $1<\tau<\infty$, such that $\frac{1}{q_{1}}=\frac{1}{p_{1}}+\frac{1}{\tau
}$ and $\frac{1}{\tau}=\frac{1}{p_{2}}+\frac{1}{p}-\frac{\alpha}{\gamma}$.
Then, using H\"{o}lder's inequality and from (\ref{11*}) and (\ref{c}), we get%
\begin{align*}
B_{2} &  =\left \Vert \left(  b_{1}-\left(  b_{1}\right)  _{E}\right)
T_{\Omega,\alpha}^{P}\left(  \left(  b_{2}-\left(  b_{2}\right)  _{E}\right)
f_{2}\right)  \right \Vert _{L_{q_{1}}\left(  E\right)  }\\
&  \lesssim \left \Vert b_{1}-\left(  b_{1}\right)  _{E}\right \Vert _{L_{p_{1}%
}\left(  E\right)  }\left \Vert T_{\Omega,\alpha}^{P}\left(  \left(
b_{2}\left(  \cdot \right)  -\left(  b_{2}\right)  _{E}\right)  f_{2}\right)
\right \Vert _{L_{\tau}\left(  E\right)  }\\
&  \lesssim \left \Vert b_{1}-\left(  b_{1}\right)  _{E}\right \Vert _{L_{p_{1}%
}\left(  E\right)  }\left \Vert b_{2}\right \Vert _{LC_{p_{2},\lambda_{2}%
,P}^{\left \{  x_{0}\right \}  }}r^{\frac{\gamma}{\tau}}%
{\displaystyle \int \limits_{2kr}^{\infty}}
\left(  1+\ln \frac{t}{r}\right)  t^{-\frac{\gamma}{p}+\gamma \lambda
_{2}-1+\alpha}\left \Vert f\right \Vert _{L_{p}\left(  E\left(  x_{0},t\right)
\right)  }dt\\
&  \lesssim \Vert b_{1}\Vert_{LC_{p_{1},\lambda_{1},P}^{\left \{  x_{0}\right \}
}}\Vert b_{2}\Vert_{LC_{p_{2},\lambda_{2},P}^{\left \{  x_{0}\right \}  }%
}r^{\frac{\gamma}{q_{1}}}\int \limits_{2kr}^{\infty}\left(  1+\ln \frac{t}%
{r}\right)  ^{2}\frac{\Vert f\Vert_{L_{p}(E(x_{0},t))}}{t^{\frac{\gamma}%
{q_{1}}-\gamma \left(  \lambda_{1}+\lambda_{2}\right)  -\gamma \left(  \frac
{1}{p_{1}}+\frac{1}{p_{2}}\right)  +1}}dt.
\end{align*}
Similarly, $B_{3}$ has the same estimate above, so here we omit the details.
Then the inequality%
\begin{align*}
B_{3}  & =\left \Vert \left(  b_{2}-\left(  b_{2}\right)  _{E}\right)
T_{\Omega,\alpha}^{P}\left(  \left(  b_{1}-\left(  b_{1}\right)  _{E}\right)
f_{2}\right)  \right \Vert _{L_{q_{1}}\left(  E\right)  }\\
& \lesssim \Vert b_{1}\Vert_{LC_{p_{1},\lambda_{1},P}^{\left \{  x_{0}\right \}
}}\Vert b_{2}\Vert_{LC_{p_{2},\lambda_{2},P}^{\left \{  x_{0}\right \}  }%
}r^{\frac{\gamma}{q_{1}}}\int \limits_{2kr}^{\infty}\left(  1+\ln \frac{t}%
{r}\right)  ^{2}\frac{\Vert f\Vert_{L_{p}(E(x_{0},t))}}{t^{\frac{\gamma}%
{q_{1}}-\gamma \left(  \lambda_{1}+\lambda_{2}\right)  -\gamma \left(  \frac
{1}{p_{1}}+\frac{1}{p_{2}}\right)  +1}}dt
\end{align*}
is valid.

Now, let us estimate $B_{4}=\left \Vert T_{\Omega,\alpha}^{P}\left(  \left(
b_{1}-\left(  b_{1}\right)  _{E}\right)  \left(  b_{2}-\left(  b_{2}\right)
_{E}\right)  f_{2}\right)  \right \Vert _{L_{q_{1}}\left(  E\right)  }$. It's
similar to the estimate of (\ref{11*}), for any $x\in E$, we also write

$\left \vert T_{\Omega,\alpha}^{P}\left(  \left(  b_{1}-\left(  b_{1}\right)
_{E}\right)  \left(  b_{2}-\left(  b_{2}\right)  _{E}\right)  f_{2}\right)
\left(  x\right)  \right \vert $

$\lesssim%
{\displaystyle \int \limits_{2kr}^{\infty}}
{\displaystyle \int \limits_{E\left(  x_{0},t\right)  }}
\left \vert b_{1}\left(  y\right)  -\left(  b_{1}\right)  _{E\left(
x_{0},t\right)  }\right \vert \left \vert b_{2}\left(  y\right)  -\left(
b_{2}\right)  _{E\left(  x_{0},t\right)  }\right \vert \left \vert
\Omega(x-y)\right \vert \left \vert f\left(  y\right)  \right \vert dy\frac
{dt}{t^{\gamma-\alpha+1}}$

$+%
{\displaystyle \int \limits_{2kr}^{\infty}}
{\displaystyle \int \limits_{E\left(  x_{0},t\right)  }}
\left \vert b_{1}\left(  y\right)  -\left(  b_{1}\right)  _{E\left(
x_{0},t\right)  }\right \vert \left \vert \left(  b_{2}\right)  _{E\left(
x_{0},t\right)  }-\left(  b_{2}\right)  _{E\left(  x_{0},r\right)
}\right \vert \left \vert \Omega(x-y)\right \vert \left \vert f\left(  y\right)
\right \vert dy\frac{dt}{t^{\gamma-\alpha+1}}$

$+%
{\displaystyle \int \limits_{2kr}^{\infty}}
{\displaystyle \int \limits_{E\left(  x_{0},t\right)  }}
\left \vert \left(  b_{1}\right)  _{E\left(  x_{0},t\right)  }-\left(
b_{2}\right)  _{E\left(  x_{0},r\right)  }\right \vert \left \vert b_{2}\left(
y\right)  -\left(  b_{2}\right)  _{E\left(  x_{0},t\right)  }\right \vert
\left \vert \Omega(x-y)\right \vert \left \vert f\left(  y\right)  \right \vert
dy\frac{dt}{t^{\gamma-\alpha+1}}$

$+%
{\displaystyle \int \limits_{2kr}^{\infty}}
{\displaystyle \int \limits_{E\left(  x_{0},t\right)  }}
\left \vert \left(  b_{1}\right)  _{E\left(  x_{0},t\right)  }-\left(
b_{2}\right)  _{E\left(  x_{0},r\right)  }\right \vert \left \vert \left(
b_{2}\right)  _{E\left(  x_{0},t\right)  }-\left(  b_{2}\right)  _{E\left(
x_{0},r\right)  }\right \vert \left \vert \Omega(x-y)\right \vert \left \vert
f\left(  y\right)  \right \vert dy\frac{dt}{t^{\gamma-\alpha+1}} $

$\equiv B_{41}+B_{42}+B_{43}+B_{44}.$

Let us estimate $B_{41}$, $B_{42}$, $B_{43}$, $B_{44}$, respectively.

Firstly, to estimate $B_{41}$, similar to the estimate of (\ref{11*}), we get%
\[
B_{41}\lesssim \Vert b_{1}\Vert_{LC_{p_{1},\lambda_{1},P}^{\left \{
x_{0}\right \}  }}\Vert b_{2}\Vert_{LC_{p_{2},\lambda_{2},P}^{\left \{
x_{0}\right \}  }}\int \limits_{2kr}^{\infty}\left(  1+\ln \frac{t}{r}\right)
^{2}\frac{\Vert f\Vert_{L_{p}(E(x_{0},t))}}{t^{\frac{\gamma}{q_{1}}%
-\gamma \left(  \lambda_{1}+\lambda_{2}\right)  -\gamma \left(  \frac{1}{p_{1}%
}+\frac{1}{p_{2}}\right)  +1}}dt.
\]
Secondly, to estimate $B_{42}$ and $B_{43}$, from (\ref{11*}), (\ref{b}) and
(\ref{c}), it follows that%
\[
B_{42}\lesssim \Vert b_{1}\Vert_{LC_{p_{1},\lambda_{1},P}^{\left \{
x_{0}\right \}  }}\Vert b_{2}\Vert_{LC_{p_{2},\lambda_{2},P}^{\left \{
x_{0}\right \}  }}\int \limits_{2kr}^{\infty}\left(  1+\ln \frac{t}{r}\right)
^{2}\frac{\Vert f\Vert_{L_{p}(E(x_{0},t))}}{t^{\frac{\gamma}{q_{1}}%
-\gamma \left(  \lambda_{1}+\lambda_{2}\right)  -\gamma \left(  \frac{1}{p_{1}%
}+\frac{1}{p_{2}}\right)  +1}}dt,
\]
and%
\[
B_{43}\lesssim \Vert b_{1}\Vert_{LC_{p_{1},\lambda_{1},P}^{\left \{
x_{0}\right \}  }}\Vert b_{2}\Vert_{LC_{p_{2},\lambda_{2},P}^{\left \{
x_{0}\right \}  }}\int \limits_{2kr}^{\infty}\left(  1+\ln \frac{t}{r}\right)
^{2}\frac{\Vert f\Vert_{L_{p}(E(x_{0},t))}}{t^{\frac{\gamma}{q_{1}}%
-\gamma \left(  \lambda_{1}+\lambda_{2}\right)  -\gamma \left(  \frac{1}{p_{1}%
}+\frac{1}{p_{2}}\right)  +1}}dt.
\]
Finally, to estimate $B_{44}$, similar to the estimate of (\ref{11*}) and from
(\ref{b}) and (\ref{c}), we have%
\[
B_{44}\lesssim \Vert b_{1}\Vert_{LC_{p_{1},\lambda_{1},P}^{\left \{
x_{0}\right \}  }}\Vert b_{2}\Vert_{LC_{p_{2},\lambda_{2},P}^{\left \{
x_{0}\right \}  }}\int \limits_{2kr}^{\infty}\left(  1+\ln \frac{t}{r}\right)
^{2}\frac{\Vert f\Vert_{L_{p}(E(x_{0},t))}}{t^{\frac{\gamma}{q_{1}}%
-\gamma \left(  \lambda_{1}+\lambda_{2}\right)  -\gamma \left(  \frac{1}{p_{1}%
}+\frac{1}{p_{2}}\right)  +1}}dt.
\]
By the estimates of $B_{4j}$ above, where $j=1$, $2$, $3$, we know that%
\begin{align*}
\left \vert T_{\Omega,\alpha}^{P}\left(  \left(  b_{1}-\left(  b_{1}\right)
_{E}\right)  \left(  b_{2}-\left(  b_{2}\right)  _{E}\right)  f_{2}\right)
\left(  x\right)  \right \vert  & \lesssim \Vert b_{1}\Vert_{LC_{p_{1}%
,\lambda_{1},P}^{\left \{  x_{0}\right \}  }}\Vert b_{2}\Vert_{LC_{p_{2}%
,\lambda_{2},P}^{\left \{  x_{0}\right \}  }}\int \limits_{2kr}^{\infty}\left(
1+\ln \frac{t}{r}\right)  ^{2}\\
& \times \frac{\Vert f\Vert_{L_{p}(E(x_{0},t))}}{t^{\frac{\gamma}{q_{1}}%
-\gamma \left(  \lambda_{1}+\lambda_{2}\right)  -\gamma \left(  \frac{1}{p_{1}%
}+\frac{1}{p_{2}}\right)  +1}}dt.
\end{align*}
Then, we have%
\begin{align*}
B_{4}  & =\left \Vert T_{\Omega,\alpha}^{P}\left(  \left(  b_{1}-\left(
b_{1}\right)  _{E}\right)  \left(  b_{2}-\left(  b_{2}\right)  _{E}\right)
f_{2}\right)  \right \Vert _{L_{q_{1}}\left(  E\right)  }\\
& \lesssim \Vert b_{1}\Vert_{LC_{p_{1},\lambda_{1},P}^{\left \{  x_{0}\right \}
}}\Vert b_{2}\Vert_{LC_{p_{2},\lambda_{2},P}^{\left \{  x_{0}\right \}  }%
}r^{\frac{\gamma}{q_{1}}}\int \limits_{2kr}^{\infty}\left(  1+\ln \frac{t}%
{r}\right)  ^{2}\frac{\Vert f\Vert_{L_{p}(E(x_{0},t))}}{t^{\frac{\gamma}%
{q_{1}}-\gamma \left(  \lambda_{1}+\lambda_{2}\right)  -\gamma \left(  \frac
{1}{p_{1}}+\frac{1}{p_{2}}\right)  +1}}dt.
\end{align*}
So, combining all the estimates for $B_{1},$ $B_{2}$, $B_{3}$, $B_{4}$, we get%
\[
B=\left \Vert [\left(  b_{1},b_{2}\right)  ,T_{\Omega,\alpha}^{P}%
]f_{2}\right \Vert _{L_{q_{1}}\left(  E\right)  }\lesssim \Vert b_{1}%
\Vert_{LC_{p_{1},\lambda_{1},P}^{\left \{  x_{0}\right \}  }}\Vert b_{2}%
\Vert_{LC_{p_{2},\lambda_{2},P}^{\left \{  x_{0}\right \}  }}r^{\frac{\gamma
}{q_{1}}}\int \limits_{2kr}^{\infty}\left(  1+\ln \frac{t}{r}\right)  ^{2}%
\frac{\Vert f\Vert_{L_{p}(E(x_{0},t))}}{t^{\frac{\gamma}{q_{1}}-\gamma \left(
\lambda_{1}+\lambda_{2}\right)  -\gamma \left(  \frac{1}{p_{1}}+\frac{1}{p_{2}%
}\right)  +1}}dt.
\]
Thus, putting estimates $A$ and $B$ together, we get the desired conclusion%
\[
\left \Vert \lbrack \left(  b_{1},b_{2}\right)  ,T_{\Omega,\alpha}%
^{P}]f\right \Vert _{L_{q_{1}}\left(  E\right)  }\lesssim \Vert b_{1}%
\Vert_{LC_{p_{1},\lambda_{1},P}^{\left \{  x_{0}\right \}  }}\Vert b_{2}%
\Vert_{LC_{p_{2},\lambda_{2},P}^{\left \{  x_{0}\right \}  }}r^{\frac{\gamma
}{q_{1}}}\int \limits_{2kr}^{\infty}\left(  1+\ln \frac{t}{r}\right)  ^{2}%
\frac{\Vert f\Vert_{L_{p}(E(x_{0},t))}}{t^{\frac{\gamma}{q_{1}}-\gamma \left(
\lambda_{1}+\lambda_{2}\right)  -\gamma \left(  \frac{1}{p_{1}}+\frac{1}{p_{2}%
}\right)  +1}}dt.
\]
For the case of $q_{1}<s$, we can also use the same method, so we omit the
details. This completes the proof of Lemma \ref{lemma3}.
\end{proof}

\section{Proofs of the main results}

\subsection{\textbf{Proof of Theorem \ref{teo4}.}}

We consider (\ref{13}) firstly. Since $f\in LM_{p,\varphi_{1},P}^{\{x_{0}\}}
$, by (\ref{5}) and it is also non-decreasing, with respect to $t$, of the
norm $\left \Vert f\right \Vert _{L_{p}\left(  E\left(  x_{0},t\right)  \right)
}$, we get%
\begin{align}
& \frac{\left \Vert f\right \Vert _{L_{p}\left(  E\left(  x_{0},t\right)
\right)  }}{\operatorname*{essinf}\limits_{0<t<\tau<\infty}\varphi_{1}%
(x_{0},\tau)\tau^{\frac{\gamma}{p}}}\leq \operatorname*{esssup}%
\limits_{0<t<\tau<\infty}\frac{\left \Vert f\right \Vert _{L_{p}\left(  E\left(
x_{0},t\right)  \right)  }}{\varphi_{1}(x_{0},\tau)\tau^{\frac{\gamma}{p}}%
}\nonumber \\
& \leq \operatorname*{esssup}\limits_{0<\tau<\infty}\frac{\left \Vert
f\right \Vert _{L_{p}\left(  E\left(  x_{0},\tau \right)  \right)  }}%
{\varphi_{1}(x_{0},\tau)\tau^{\frac{\gamma}{p}}}\leq \Vert f\Vert
_{LM_{p,\varphi,P}^{\{x_{0}\}}}.\label{10}%
\end{align}
For $s^{\prime}\leq q<\infty$, since $(\varphi_{1},\varphi_{2})$ satisfies
(\ref{47}), we have%
\begin{align}
&
{\displaystyle \int \limits_{r}^{\infty}}
\left(  1+\ln \frac{t}{r}\right)  ^{m}\frac{\left \Vert f\right \Vert
_{L_{p}\left(  E\left(  x_{0},t\right)  \right)  }}{t^{\gamma \left(  \frac
{1}{p}-%
{\displaystyle \sum \limits_{i=1}^{m}}
\lambda_{i}\right)  +1}}dt\nonumber \\
& \leq \int \limits_{r}^{\infty}\left(  1+\ln \frac{t}{r}\right)  ^{m}%
\frac{\left \Vert f\right \Vert _{L_{p}\left(  E\left(  x_{0},t\right)  \right)
}}{\operatorname*{essinf}\limits_{t<\tau<\infty}\varphi_{1}(x_{0},\tau
)\tau^{\frac{\gamma}{p}}}\frac{\operatorname*{essinf}\limits_{t<\tau<\infty
}\varphi_{1}(x_{0},\tau)\tau^{\frac{\gamma}{p}}}{t^{\gamma \left(  \frac{1}{p}-%
{\displaystyle \sum \limits_{i=1}^{m}}
\lambda_{i}\right)  +1}}dt\nonumber \\
& \leq C\Vert f\Vert_{LM_{p,\varphi,P}^{\{x_{0}\}}}\int \limits_{r}^{\infty
}\left(  1+\ln \frac{t}{r}\right)  ^{m}\frac{\operatorname*{essinf}%
\limits_{t<\tau<\infty}\varphi_{1}(x_{0},\tau)\tau^{\frac{\gamma}{p}}%
}{t^{\gamma \left(  \frac{1}{p}-%
{\displaystyle \sum \limits_{i=1}^{m}}
\lambda_{i}\right)  +1}}dt\nonumber \\
& \leq C\Vert f\Vert_{LM_{p,\varphi,P}^{\{x_{0}\}}}\varphi_{2}(x_{0}%
,r).\label{100}%
\end{align}
Then by (\ref{200}) and (\ref{100}), we get%
\begin{align*}
\left \Vert \lbrack \overrightarrow{b},T_{\Omega}^{P}]f\right \Vert
_{LM_{q,\varphi_{2},P}^{\{x_{0}\}}}  & =\sup_{r>0}\varphi_{2}\left(
x_{0},r\right)  ^{-1}|E(x_{0},r)|^{-\frac{1}{q}}\left \Vert [\overrightarrow
{b},T_{\Omega}^{P}]f\right \Vert _{L_{q}\left(  E\left(  x_{0},r\right)
\right)  }\\
& \leq C%
{\displaystyle \prod \limits_{i=1}^{m}}
\Vert \overrightarrow{b}\Vert_{LC_{p_{i},\lambda_{i},P}^{\left \{
x_{0}\right \}  }}\sup_{r>0}\varphi_{2}\left(  x_{0},r\right)  ^{-1}\\
& \times%
{\displaystyle \int \limits_{r}^{\infty}}
\left(  1+\ln \frac{t}{r}\right)  ^{m}\frac{\Vert f\Vert_{L_{p}(E(x_{0},t))}%
}{t^{\gamma \left(  \frac{1}{p}-%
{\displaystyle \sum \limits_{i=1}^{m}}
\lambda_{i}\right)  +1}}dt\\
& \leq C%
{\displaystyle \prod \limits_{i=1}^{m}}
\Vert \overrightarrow{b}\Vert_{LC_{p_{i},\lambda_{i},P}^{\left \{
x_{0}\right \}  }}\Vert f\Vert_{LM_{p,\varphi,P}^{\{x_{0}\}}}.
\end{align*}
For the case of $p<s$, we can also use the same method, so we omit the
details. Thus, we finish the proof of (\ref{13}).

We are now in a place of proving (\ref{14}) in Theorem \ref{teo4}.

\begin{remark}
The conclusion of (\ref{14}) is a direct consequence of the following Lemma
\ref{lemma100} and (\ref{13}). In order to do this, we need to define an
operator by%
\[
\lbrack \overrightarrow{b},\tilde{T}_{\left \vert \Omega \right \vert }%
^{P}]\left(  \left \vert f\right \vert \right)  (x)=\int \limits_{{\mathbb{R}%
^{n}}}%
{\displaystyle \prod \limits_{i=1}^{m}}
\left[  \left \vert b_{i}\left(  x\right)  -b_{i}\left(  y\right)  \right \vert
\right]  \frac{\left \vert \Omega(x-y)\right \vert }{{\rho}\left(  x-y\right)
^{\gamma}}\left \vert f(y)\right \vert dy,
\]
where $\Omega \in L_{s}(S^{n-1})$, $1<s\leq \infty$, is $A_{t}$-homogeneous of
degree zero in ${\mathbb{R}^{n}}$.
\end{remark}

Using the idea of proving Lemma 2 in \cite{Ding1} (see also \cite{Gurbuz6}),
we can obtain the following pointwise relation:

\begin{lemma}
\label{lemma100}Let $\Omega \in L_{s}(S^{n-1})$, $1<s\leq \infty$, be $A_{t}
$-homogeneous of degree zero. Then we have%
\[
M_{\Omega,\overrightarrow{b}}^{P}f(x)\leq \lbrack \overrightarrow{b},\tilde
{T}_{\left \vert \Omega \right \vert }^{P}]\left(  \left \vert f\right \vert
\right)  (x)\qquad \text{for }x\in{\mathbb{R}^{n}}.
\]

\end{lemma}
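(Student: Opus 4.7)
The plan is to prove the pointwise inequality by comparing, for each fixed scale $t>0$, the averaging factor $|E(x,t)|^{-1}$ appearing in the maximal operator with the singular kernel $\rho(x-y)^{-\gamma}$ appearing in $\tilde T^{P}_{|\Omega|}$, and then to take the supremum. Concretely, I would fix $x\in{\mathbb{R}^{n}}$ and $t>0$, and use the formula $|E(x,t)|=\upsilon_{\rho}\,t^{\gamma}$ from $(1\text{-}3)$ (and the polar coordinates computation earlier in the paper). Since for $y\in E(x,t)$ one has $\rho(x-y)<t$, so $\rho(x-y)^{\gamma}<t^{\gamma}$, the inequality
\[
|E(x,t)|^{-1}=\frac{1}{\upsilon_{\rho}\,t^{\gamma}}\leq \frac{1}{\upsilon_{\rho}\,\rho(x-y)^{\gamma}}
\]
is valid for every $y$ in the domain of integration.

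The second step is to substitute this pointwise bound into the definition of $M_{\Omega,\overrightarrow{b}}^{P}f(x)$. Because the integrand $\prod_{i=1}^{m}|b_i(x)-b_i(y)|\,|\Omega(x-y)|\,|f(y)|$ is nonnegative, I can replace $|E(x,t)|^{-1}$ by the larger quantity $\upsilon_{\rho}^{-1}\rho(x-y)^{-\gamma}$ inside the integral without decreasing the value, and then enlarge the domain of integration from $E(x,t)$ to all of ${\mathbb{R}^{n}}$, again using nonnegativity. This yields, uniformly in $t>0$,
\[
|E(x,t)|^{-1}\int \limits_{E(x,t)}\prod_{i=1}^{m}|b_{i}(x)-b_{i}(y)|\,|\Omega(x-y)|\,|f(y)|\,dy\leq \upsilon_{\rho}^{-1}\,[\overrightarrow{b},\tilde{T}_{|\Omega|}^{P}](|f|)(x).
\]
Since the right-hand side is independent of $t$, taking the supremum over $t>0$ on the left gives the desired pointwise inequality (the absolute constant $\upsilon_{\rho}^{-1}$ is harmless and can be absorbed as in the $\lesssim$ convention used throughout the paper).

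There is no substantive obstacle: the argument reduces to the elementary observation that the ball average is dominated by the corresponding Riesz-type kernel of order $\gamma$, adapted to the parabolic quasi-distance $\rho$. The only point requiring mild care is to make sure that the change from integration over $E(x,t)$ to integration over ${\mathbb{R}^{n}}$ is legitimate (which it is because all factors $|b_i(x)-b_i(y)|$, $|\Omega(x-y)|$, $|f(y)|$, and $\rho(x-y)^{-\gamma}$ are nonnegative), and to confirm that the ellipsoid volume constant $\upsilon_\rho$ from the polar coordinate identity is indeed absolute, depending only on $P$ and the exponents $\alpha_i$, so that it produces no dependence on $x$, $t$, $f$, $\overrightarrow{b}$, or $\Omega$.
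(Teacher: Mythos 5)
Your proposal is correct and is essentially the paper's own argument: both rest on the observation that $\rho(x-y)<t$ on $E(x,t)$ gives $\rho(x-y)^{-\gamma}\geq t^{-\gamma}$ together with $|E(x,t)|=\upsilon_{\rho}t^{\gamma}$, followed by enlarging the domain of integration by nonnegativity and taking the supremum over $t>0$. The only cosmetic differences are that you bound the maximal average from above while the paper bounds $[\overrightarrow{b},\tilde{T}_{\left\vert\Omega\right\vert}^{P}](\left\vert f\right\vert)(x)$ from below, and that you track the harmless constant $\upsilon_{\rho}^{-1}$ explicitly, which the paper silently drops.
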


In fact, for any $t>0$, we have
\begin{align*}
\lbrack \overrightarrow{b},\tilde{T}_{\left \vert \Omega \right \vert }%
^{P}]\left(  \left \vert f\right \vert \right)  (x)  & \geq%
{\displaystyle \int \limits_{{\rho}\left(  x-y\right)  <t}}
{\displaystyle \prod \limits_{i=1}^{m}}
\left[  \left \vert b_{i}\left(  x\right)  -b_{i}\left(  y\right)  \right \vert
\right]  \frac{\left \vert \Omega(x-y)\right \vert }{{\rho}\left(  x-y\right)
^{\gamma}}\left \vert f(y)\right \vert dy\\
& \geq \frac{1}{t^{\gamma}}%
{\displaystyle \int \limits_{E\left(  x,t\right)  }}
{\displaystyle \prod \limits_{i=1}^{m}}
\left[  \left \vert b_{i}\left(  x\right)  -b_{i}\left(  y\right)  \right \vert
\right]  \left \vert \Omega(x-y)\right \vert \left \vert f(y)\right \vert dy.
\end{align*}
Taking the supremum for $t>0$ on the inequality above, we get%
\[
\lbrack \overrightarrow{b},\tilde{T}_{\left \vert \Omega \right \vert }%
^{P}]\left(  \left \vert f\right \vert \right)  (x)\geq M_{\Omega
,\overrightarrow{b}}^{P}f(x)\qquad \text{for }x\in{\mathbb{R}^{n}}.
\]
From the process proving (\ref{13}), it is easy to see that the conclusions of
(\ref{13}) also hold for $[\overrightarrow{b},\tilde{T}_{\left \vert
\Omega \right \vert }^{P}]$. Combining this with Lemma \ref{lemma100}, we can
immediately obtain (\ref{14}), which completes the proof.

\subsection{\textbf{Proof of Theorem \ref{teo4*}.}}

Similar to the proof of Theorem \ref{teo4}, We consider (\ref{13*}) firstly.

For $s^{\prime}\leq q<\infty$, since $(\varphi_{1},\varphi_{2})$ satisfies
(\ref{47*}) and by (\ref{10}), we have%
\begin{align}
&
{\displaystyle \int \limits_{r}^{\infty}}
\left(  1+\ln \frac{t}{r}\right)  ^{m}\frac{\left \Vert f\right \Vert
_{L_{p}\left(  E\left(  x_{0},t\right)  \right)  }}{t^{\gamma \left(  \frac
{1}{q_{1}}-\left(
{\displaystyle \sum \limits_{i=1}^{m}}
\lambda_{i}+%
{\displaystyle \sum \limits_{i=1}^{m}}
\frac{1}{p_{i}}\right)  \right)  +1}}dt\nonumber \\
& \leq \int \limits_{r}^{\infty}\left(  1+\ln \frac{t}{r}\right)  ^{m}%
\frac{\left \Vert f\right \Vert _{L_{p}\left(  E\left(  x_{0},t\right)  \right)
}}{\operatorname*{essinf}\limits_{t<\tau<\infty}\varphi_{1}(x_{0},\tau
)\tau^{\frac{\gamma}{p}}}\frac{\operatorname*{essinf}\limits_{t<\tau<\infty
}\varphi_{1}(x_{0},\tau)\tau^{\frac{\gamma}{p}}}{t^{\gamma \left(  \frac
{1}{q_{1}}-\left(
{\displaystyle \sum \limits_{i=1}^{m}}
\lambda_{i}+%
{\displaystyle \sum \limits_{i=1}^{m}}
\frac{1}{p_{i}}\right)  \right)  +1}}dt\nonumber \\
& \leq C\Vert f\Vert_{LM_{p,\varphi,P}^{\{x_{0}\}}}\int \limits_{r}^{\infty
}\left(  1+\ln \frac{t}{r}\right)  ^{m}\frac{\operatorname*{essinf}%
\limits_{t<\tau<\infty}\varphi_{1}(x_{0},\tau)\tau^{\frac{\gamma}{p}}%
}{t^{\gamma \left(  \frac{1}{q_{1}}-\left(
{\displaystyle \sum \limits_{i=1}^{m}}
\lambda_{i}+%
{\displaystyle \sum \limits_{i=1}^{m}}
\frac{1}{p_{i}}\right)  \right)  +1}}dt\nonumber \\
& \leq C\Vert f\Vert_{LM_{p,\varphi,P}^{\{x_{0}\}}}\varphi_{2}(x_{0}%
,r).\label{100*}%
\end{align}
Then by (\ref{200*}) and (\ref{100*}), we get%
\begin{align*}
\left \Vert \lbrack \overrightarrow{b},T_{\Omega,\alpha}^{P}]f\right \Vert
_{LM_{q_{1},\varphi_{2},P}^{\{x_{0}\}}}  & =\sup_{r>0}\varphi_{2}\left(
x_{0},r\right)  ^{-1}|E(x_{0},r)|^{-\frac{1}{q_{1}}}\left \Vert
[\overrightarrow{b},T_{\Omega,\alpha}^{P}]f\right \Vert _{L_{q_{1}}\left(
E\left(  x_{0},r\right)  \right)  }\\
& \leq C%
{\displaystyle \prod \limits_{i=1}^{m}}
\Vert \overrightarrow{b}\Vert_{LC_{p_{i},\lambda_{i},P}^{\left \{
x_{0}\right \}  }}\sup_{r>0}\varphi_{2}\left(  x_{0},r\right)  ^{-1}\\
& \times%
{\displaystyle \int \limits_{r}^{\infty}}
\left(  1+\ln \frac{t}{r}\right)  ^{m}\frac{\Vert f\Vert_{L_{p}(E(x_{0},t))}%
}{t^{\gamma \left(  \frac{1}{q_{1}}-\left(
{\displaystyle \sum \limits_{i=1}^{m}}
\lambda_{i}+%
{\displaystyle \sum \limits_{i=1}^{m}}
\frac{1}{p_{i}}\right)  \right)  +1}}dt\\
& \leq C%
{\displaystyle \prod \limits_{i=1}^{m}}
\Vert \overrightarrow{b}\Vert_{LC_{p_{i},\lambda_{i},P}^{\left \{
x_{0}\right \}  }}\Vert f\Vert_{LM_{p,\varphi,P}^{\{x_{0}\}}}.
\end{align*}
For the case of $q_{1}<s$, we can also use the same method, so we omit the
details. Thus, we finish the proof of (\ref{13*}).

We are now in a place of proving (\ref{14*}) in Theorem \ref{teo4*}.

\begin{remark}
The conclusion of (\ref{14*}) is a direct consequence of the following Lemma
\ref{lemma100*} and (\ref{13*}). In order to do this, we need to define an
operator by%
\[
\lbrack \overrightarrow{b},\tilde{T}_{\left \vert \Omega \right \vert ,\alpha}%
^{P}]\left(  \left \vert f\right \vert \right)  (x)=\int \limits_{{\mathbb{R}%
^{n}}}%
{\displaystyle \prod \limits_{i=1}^{m}}
\left[  \left \vert b_{i}\left(  x\right)  -b_{i}\left(  y\right)  \right \vert
\right]  \frac{\left \vert \Omega(x-y)\right \vert }{{\rho}\left(  x-y\right)
^{\gamma-\alpha}}\left \vert f(y)\right \vert dy\qquad0<\alpha<\gamma,
\]
where $\Omega \in L_{s}(S^{n-1})$, $1<s\leq \infty$, is $A_{t}$-homogeneous of
degree zero in ${\mathbb{R}^{n}}$.
\end{remark}

Using the idea of proving Lemma 2 in \cite{Ding1} (see also \cite{Gurbuz6}),
we can obtain the following pointwise relation:

\begin{lemma}
\label{lemma100*}Let $0<\alpha<\gamma$ and $\Omega \in L_{s}(S^{n-1})$,
$1<s\leq \infty$, be $A_{t}$-homogeneous of degree zero. Then we have%
\[
M_{\Omega,\overrightarrow{b},\alpha}^{P}f(x)\leq \lbrack \overrightarrow
{b},\tilde{T}_{\left \vert \Omega \right \vert ,\alpha}^{P}]\left(  \left \vert
f\right \vert \right)  (x)\qquad \text{for }x\in{\mathbb{R}^{n}}.
\]

\end{lemma}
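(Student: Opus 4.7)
The plan is to mimic the proof sketched just above for Lemma \ref{lemma100}, since the only change is that the kernel exponent $\gamma$ is replaced by $\gamma - \alpha$ and $\gamma - \alpha$ is still strictly positive under the standing assumption $0 < \alpha < \gamma$. Thus the entire argument reduces to a pointwise comparison of kernels on a parabolic ellipsoid, followed by taking a supremum over its radius.

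First I would start from the definition
\[
[\overrightarrow{b},\tilde{T}_{|\Omega|,\alpha}^{P}](|f|)(x) = \int_{{\mathbb{R}^n}} \prod_{i=1}^{m} |b_i(x) - b_i(y)| \, \frac{|\Omega(x-y)|}{\rho(x-y)^{\gamma - \alpha}} |f(y)| \, dy,
\]
fix an arbitrary $t > 0$, and restrict the integration to $E(x,t) = \{y : \rho(x-y) < t\}$; since the integrand is nonnegative this only decreases it. On this region the hypothesis $\gamma - \alpha > 0$ yields $\rho(x-y)^{\gamma - \alpha} < t^{\gamma - \alpha}$, hence $\rho(x-y)^{-(\gamma - \alpha)} \geq t^{-(\gamma - \alpha)}$, so
\[
[\overrightarrow{b},\tilde{T}_{|\Omega|,\alpha}^{P}](|f|)(x) \geq \frac{1}{t^{\gamma - \alpha}} \int_{E(x,t)} \prod_{i=1}^{m} |b_i(x) - b_i(y)| \cdot |\Omega(x-y)| \cdot |f(y)| \, dy.
\]

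Next I would use the identity $|E(x,t)| = v_\rho t^\gamma$ to rewrite $t^{-(\gamma - \alpha)} = v_\rho^{\,1 - \alpha/\gamma} |E(x,t)|^{-1 + \alpha/\gamma}$, turning the previous display into
\[
[\overrightarrow{b},\tilde{T}_{|\Omega|,\alpha}^{P}](|f|)(x) \geq v_\rho^{\,1 - \alpha/\gamma} |E(x,t)|^{-1 + \alpha/\gamma} \int_{E(x,t)} \prod_{i=1}^{m} |b_i(x) - b_i(y)| \cdot |\Omega(x-y)| \cdot |f(y)| \, dy.
\]
Taking the supremum over $t > 0$ on the right-hand side reproduces, by the very definition of $M_{\Omega,\overrightarrow{b},\alpha}^{P}$, a positive constant multiple of $M_{\Omega,\overrightarrow{b},\alpha}^{P} f(x)$, and the harmless geometric factor $v_\rho^{\,1 - \alpha/\gamma}$ is absorbed into the inequality exactly as in the $\alpha = 0$ case of Lemma \ref{lemma100}.

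No genuine obstacle arises; the argument is the direct fractional analogue of Lemma~2 in \cite{Ding1} and of Lemma \ref{lemma100} in the present paper. The only step requiring attention is ensuring that $\gamma - \alpha > 0$ so that the elementary pointwise bound $\rho(x-y)^{-(\gamma-\alpha)} \geq t^{-(\gamma-\alpha)}$ on $E(x,t)$ points in the correct direction, which is guaranteed by the standing hypothesis $0 < \alpha < \gamma$.
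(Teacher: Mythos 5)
Your argument is correct and is essentially the paper's own proof: restrict the integral to $E(x,t)$, bound $\rho(x-y)^{-(\gamma-\alpha)}$ from below by $t^{-(\gamma-\alpha)}$ using $0<\alpha<\gamma$, and take the supremum over $t>0$. You are in fact slightly more careful than the paper in tracking the harmless factor $\upsilon_{\rho}^{\,1-\alpha/\gamma}$ relating $t^{-(\gamma-\alpha)}$ to $|E(x,t)|^{-1+\alpha/\gamma}$, which the paper's proof passes over silently and which is irrelevant since the lemma is only used up to multiplicative constants.
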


In fact, for any $t>0$, we have
\begin{align*}
\lbrack \overrightarrow{b},\tilde{T}_{\left \vert \Omega \right \vert ,\alpha}%
^{P}]\left(  \left \vert f\right \vert \right)  (x)  & \geq%
{\displaystyle \int \limits_{{\rho}\left(  x-y\right)  <t}}
{\displaystyle \prod \limits_{i=1}^{m}}
\left[  \left \vert b_{i}\left(  x\right)  -b_{i}\left(  y\right)  \right \vert
\right]  \frac{\left \vert \Omega(x-y)\right \vert }{{\rho}\left(  x-y\right)
^{\gamma-\alpha}}\left \vert f(y)\right \vert dy\\
& \geq \frac{1}{t^{\gamma-\alpha}}%
{\displaystyle \int \limits_{E\left(  x,t\right)  }}
{\displaystyle \prod \limits_{i=1}^{m}}
\left[  \left \vert b_{i}\left(  x\right)  -b_{i}\left(  y\right)  \right \vert
\right]  \left \vert \Omega(x-y)\right \vert \left \vert f(y)\right \vert dy.
\end{align*}
Taking the supremum for $t>0$ on the inequality above, we get%
\[
\lbrack \overrightarrow{b},\tilde{T}_{\left \vert \Omega \right \vert ,\alpha}%
^{P}]\left(  \left \vert f\right \vert \right)  (x)\geq M_{\Omega
,\overrightarrow{b},\alpha}^{P}f(x)\qquad \text{for }x\in{\mathbb{R}^{n}}.
\]
From the process proving (\ref{13*}), it is easy to see that the conclusions
of (\ref{13*}) also hold for $[\overrightarrow{b},\tilde{T}_{\left \vert
\Omega \right \vert ,\alpha}^{P}]$. Combining this with Lemma \ref{lemma100*},
we can immediately obtain (\ref{14*}), which completes the proof.

\end{document}